\def\sD{{\mathfrak D}}      
\def\sG{{\mathfrak G}}   \def\sH{{\mathfrak H}}   
\def\sM{{\mathfrak M}}
      \def\dC{{\mathbb C}}
      \def\dR{{\mathbb R}}
\def\cG{{\mathcal G}}   \def\cH{{\mathcal H}}
\def\cP{{\mathcal P}}      
\def\cV{{\mathcal V}}
\def\ran{{\text{\rm ran\,}}}
\def\dom{{\text{\rm dom\,}}}
\def\moverline#1{\ThisStyle{%
  \setbox0=\hbox{$\SavedStyle#1$}%
  \stackengine{1.4\LMpt}{$\SavedStyle#1$}{\rule{\wd0}{.3\LMpt}}{0}{c}{F}{F}{S}%
}}
 \newcommand{\myoverline}[1]{\mkern 1mu\moverline{\mkern-1mu#1\mkern-1mu}\mkern 1mu}
\newtheorem{theorem}{Theorem}[section]
\newtheorem*{thm*}{Theorem}
\newtheorem{proposition}[theorem]{Proposition}
\newtheorem{corollary}[theorem]{Corollary}
\newtheorem{lemma}[theorem]{Lemma}
\theoremstyle{definition}
\newtheorem{definition}[theorem]{Definition}
\newtheorem{example}[theorem]{Example}
\newtheorem{remark}[theorem]{Remark}
\numberwithin{equation}{section}
\title[]{Boundary value problems for adjoint pairs of operators}
\author{Jussi Behrndt}
 \address{Institut f\"ur Angewandte Mathematik \\
 Technische Universit\"at Graz \\
 Steyrergasse 30\\
 A-8010 Graz\\
 Austria}
 \email{behrndt@tugraz.at}
\begin{document}

\begin{abstract}
The 
notion of quasi boundary triples and their Weyl functions from extension theory of symmetric operators 
is extended to the general framework of adjoint pairs
of operators under minimal conditions on the boundary maps. With the help of the corresponding
abstract Titchmarsh-Weyl $M$-functions sufficient conditions for the unique solvability of the related boundary value 
problems are obtained and the solutions are expressed via Krein-type resolvent formulae. The abstract theory developed in
this manuscript can be applied to a large class of elliptic differential operators.
\end{abstract}

\maketitle

\section{Introduction}
Boundary value problems for elliptic partial differential operators are often treated within the abstract framework of adjoint pairs of operators. This approach has its roots 
in the works of M.I. Visik \cite{V52}, M.S. Birman \cite{B62} and G. Grubb \cite{G68}, and has been further developed in the context of boundary triples 
and their Weyl functions for adjoint pairs of abstract operators in, e.g. \cite{BGW09,BHMNW09,BMNW08,M10}. The notion of (ordinary) boundary triples for
adjoint pairs of operators goes back to L.I. Vainerman \cite{V80} and the monograph \cite{LS83} by V.E.~Lyantse and O.G.~Storozh; the corresponding Weyl functions and Krein-type resolvent formulae were provided later by M.M. Malamud and V.I. Mogilevskii in \cite{MM97,MM99,MM02,M06}; see also \cite{BGHN17,HMM05,HMM13}. For the special case of 
symmetric operators and the spectral analysis of 
their self-adjoint extensions
the boundary triple technique is nowadays very well established \cite{BHS20,B76,BGP08,DHMS00,DM91,GG91,G09,K75,S12} and has been applied and extended in various directions. 
Among many generalizations of the notion of ordinary boundary triples for symmetric operators are the so-called quasi boundary triples, generalized boundary triples, and 
boundary relations for symmetric operators and relations from \cite{BL07,DHMS06,DM95}, see \cite{BL12,BLLR18,BMN17,BM14,BS19,DHM20,DHM22,DHMS09,DHMS12} for subsequent developments 
and in this context we also refer to \cite{AGW14,AB09,ACE23,AE12,AEKS14,A00,A12,AP17,BEHL18,BLL13,GM08,GM09,GM11,G08,LT77,LS22,LS23,MPS16,P04,P08,PR09,P07,P13,R07,W17,W13} for other closely related approaches and  typical 
applications. 

The main objective of this paper is to extend the notion of quasi boundary triples and their Weyl functions for symmetric operators from \cite{BL07,BL12,BLLR18,BM14} to the general framework of adjoint pairs
of operators, and to develop the abstract theory around this concept;
in particular, the aim is to provide sufficient conditions for boundary parameters and boundary mappings to induce closed extensions with nonempty resolvent sets, and to describe their 
resolvents via Krein-type resolvent formulae. Here we shall work under minimal assumptions on the boundary operators in the triple, that is, we require an abstract version of Green's second identity (G),
a weaker density condition (D) on the range of the boundary mappings than usual, and a certain maximality condition (M). In our results we shall always state explicitely which assumptions
(G), (D), or (M) are needed for the actual statement. 

Let us briefly explain and motivate our approach 
and the main difference to the concept of ordinary boundary triples for 
adjoint pairs of operators. For this, consider two densely defined closed operators
$S$ and $\widetilde S$ in a Hilbert space $\sH$ such that $\widetilde S\subset S^*$ (or, equivalently $S\subset\widetilde S^*$).
Assume that the operators $T$ and $\widetilde T$ are cores of $S$ and $\widetilde S$, respectively, that is, their closures coincide with
$S^*$ and $\widetilde S^*$. The key feature in our theory is the assumption that there exists an auxiliary (boundary) 
Hilbert space $\cG$ and boundary mappings $\Gamma_0,\Gamma_1:\dom T\rightarrow\cG$ and $\widetilde\Gamma_0,\widetilde\Gamma_1:\dom \widetilde T\rightarrow\cG$ such that an abstract Green's identity 
\begin{flalign*}
{\rm (G)}&\qquad\qquad\qquad (Tf,g)_\sH-(f,\widetilde Tg)_\sH=(\Gamma_1 f,\Gamma_0 g)_\cG-(\Gamma_0 f,\widetilde\Gamma_1 g)_\cG&
\end{flalign*}
holds for all $f\in\dom T$ and $g\in\dom\widetilde T$. We emphasize that (G) is not required on the (full) domain of the adjoint operators $S^*$ and $\widetilde S^*$ (as is the case for ordinary boundary triples) and that (G) typically does not 
admit an extension onto $\dom S^*$ and $\dom\widetilde S^*$. 
We are mainly interested in the case $\dim\cG=\infty$ (as otherwise $T=S^*$ and $\widetilde T=\widetilde S^*$, and hence ordinary boundary triples can be used).
In addition to Green's identity (G) a 
density condition (D) or (DD) and a maximality condition (M) in Definition~\ref{qbt} is often needed for a fruitful and functioning theory.
We will then study extensions of $\widetilde S$ and $S$ which are restrictions of $T$ and $\widetilde T$, respectively, 
of the form 
 \begin{equation*}
 \begin{split}
  A_B f&= Tf,\qquad \dom A_B=\bigl\{f\in\dom T: B\Gamma_1 f = \Gamma_0 f\bigr\},\\
  \widetilde A_{\widetilde B} g&= \widetilde Tg,\qquad \dom \widetilde A_{\widetilde B}=\bigl\{g\in\dom \widetilde T: \widetilde B\widetilde\Gamma_1 g 
  = \widetilde \Gamma_0 g\bigr\},
 \end{split}
 \end{equation*}
 where $B$ and $\widetilde B$ are linear operators in $\cG$. In the general abstract setting the goal is to show that
 $A_B$ and $\widetilde A_{\widetilde B}$ are closed operators with nonempty resolvent sets,
 as this ensures unique solvability or well-posedness of the abstract Robin-type boundary value problems 
 \begin{equation*}
  (T-\lambda)f=h,\quad B\Gamma_1 f=\Gamma_0f,\quad\text{or}\quad (\widetilde T-\mu)g=k,\quad \widetilde B\widetilde\Gamma_1 g
  =\widetilde \Gamma_0g,
 \end{equation*}
 whenever $\lambda\in\rho(A_B)$ and $\mu\in\rho(\widetilde A_{\widetilde B})$ and $h,k\in\sH$.
 After proving an abstract Birman-Schwinger principle (in a symmetrized form) 
 we find sufficient conditions on the boundary parameter, the mapping properties of the boundary maps,
 and the associated Weyl functions such that the resolvents $A_B$ and $\widetilde A_{\widetilde B}$ can be explicitly 
 computed in terms of the resolvent of an underlying fixed extension and a perturbation term in the boundary space $\cG$.
 We find it useful for reference purposes to summarize our results for general adjoint pairs in Appendix~\ref{app} in the special case that $S=\widetilde S$ is a densely defined closed symmetric operator. In this situation our results generalize those from \cite{BL07,BL12,BLLR18,BS19} in the sense that we impose a weaker density condition (D) than usual. 

The present paper stays on an abstract operator theory level and we have decided to postpone the diverse applications to 
future investigations and projects (with the small exceptions Example~\ref{exi1} and Example~\ref{exi2}, where a strongly elliptic system on a Lipschitz domain following 
\cite{M00} is discussed). We only indicate briefly as a motivation, that
in the situation of a second order elliptic differential expression  
(with smooth coefficients on a domain $\Omega\subset\dR^n$, $n\geq 2$, with a smooth boundary $\partial\Omega$) the adjoints 
$S^*$ and $\widetilde S^*$ coincide with the maximal operators associated to 
the differential expression and its formal adjoint in $L^2(\Omega)$, whereas $T$ and $\widetilde T$ can be chosen, e.g. as the operator realizations defined on 
the second order Sobolev space $H^2(\Omega)$; in this case a natural choice is $\Gamma_0=\widetilde\Gamma_0$ 
as the Dirichlet trace and $\Gamma_1,\widetilde\Gamma_1$ as the conormal derivatives mapping into the boundary space $L^2(\partial\Omega)$. It is clear that the operators $A_B$ and $\widetilde A_{\widetilde B}$ above
are Robin realizations of the elliptic differential expression and its formal adjoint. Besides this standard situation 
sketched here many other applications to differential operators can be explored.

\subsection*{Acknowledgements}  
The author is most grateful 
for the stimulating research
stay and the hospitality at the University of Auckland, where some
parts of this paper were written. The author also gratefully acknowledges financial support by the Austrian Science Fund (FWF): P 33568-N.
This publication is based upon work from COST Action CA 18232 MAT-DYN-NET, supported by COST (European Cooperation in Science and Technology), www.cost.eu.

% 
% \footnote{CHECK V52,V63: Malamuds paper in:
%  Partial Differential Equations and Functional Analysis
% Mark Vishik: Life and Scientific Legacy
% Birkh\"auser
%     Andrew Comech, Alexander Komech, Mikhail Vishik
% }

\section{Quasi boundary triples for adjoint pairs of operators}\label{sec2}

Let throughout this section $S$ and $\widetilde S$ be densely defined closed operators in a separable Hilbert space $\sH$ such that
\begin{equation}\label{dpab}
 (Sf,g)=(f,\widetilde S g),\qquad f\in\dom S,\, g\in\dom \widetilde S,
\end{equation}
holds. Note that this is the same as requiring $\widetilde S\subset S^*$ or $S\subset \widetilde S^*$. 
We shall call a pair $\{S,\widetilde S\}$ of operators with this property an {\it adjoint pair} or {\it dual pair}.
In this manuscript we are mainly interested in the situation
\begin{equation}\label{deffi}
\dim\bigl(\dom S^*/\dom\widetilde S\bigr)= \dim\bigl(\dom \widetilde S^*/\dom S\bigr)=\infty,
\end{equation}
although our results do not formally require this condition.
Note that in the special case $S=\widetilde S$ the property \eqref{dpab} shows that $S$ is a symmetric operator and \eqref{deffi} means that at least one of 
the defect numbers of $S$ is infinite. In the following we shall work with operators $T\subset S^*$ and 
$\widetilde T\subset\widetilde S^*$ in $\sH$ which (are typically not closed and) satisfy 
\begin{equation*}
 \myoverline T=S^*\quad\text{and}\quad \myoverline{\widetilde T}=\widetilde S^*,
\end{equation*}
or, equivalently $T^*=S$ and $\widetilde T^*=\widetilde S$. In this situation we 
shall say that $T$ and $\widetilde T$ are {\it cores} of $S^*$ and $\widetilde S^*$, respectively.

Inspired by the notion of quasi boundary triples for symmetric operators in \cite{BL07,BL12} and typical applications in elliptic boundary value problems in the non-symmetric situation (see, e.g. \cite{M00}) we extend the definition of
boundary triples for adjoint pairs in the abstract setting from \cite{LS83,V80}, see also \cite{BGW09,BHMNW09,BMNW08,HMM13,MM97,MM99,MM02}. At the same time we also impose a slightly weaker density condition (D) than usual and it will turn out that this is sufficient for a functioning theory.

\begin{definition}\label{qbt}
Let $\{S,\widetilde S\}$ be an adjoint pair of operators in $\sH$ and assume 
$T$ and $\widetilde T$ are cores of $S^*$ and $\widetilde S^*$, respectively.
We shall consider {\em triples} of the form $\{\cG,(\Gamma_0,\Gamma_1),(\widetilde\Gamma_0,\widetilde\Gamma_1)\}$ for the adjoint pair $\{S,\widetilde S\}$, where
$\cG$ is a Hilbert space and
\begin{equation*}
 \Gamma_0,\Gamma_1:\dom T\rightarrow\cG,\qquad \widetilde\Gamma_0,\widetilde\Gamma_1:\dom \widetilde T\rightarrow\cG,
\end{equation*}
are linear mappings such that
\begin{itemize}
 \item [{\rm (G)}] the abstract {\em Green's identity}
\begin{equation*}
 (Tf,g)_\cH-(f,\widetilde Tg)_\cH=(\Gamma_1 f,\widetilde \Gamma_0 g)_\cG-(\Gamma_0 f,\widetilde\Gamma_1 g)_\cG
\end{equation*}
holds for all $f\in\dom T$ and $g\in\dom \widetilde T$,
\item [{\rm (D)}] the ranges of $\Gamma_0:\dom T\rightarrow\cG$ and $\widetilde\Gamma_0:\dom \widetilde T\rightarrow \cG$ are dense,
\item [{\rm (M)}] the operators $A_0:=T\upharpoonright\ker\Gamma_0$ and $\widetilde A_0:=\widetilde T\upharpoonright\ker\widetilde \Gamma_0$ satisfy
\begin{equation}\label{adjointi}
A_0^*=\widetilde A_0\quad\text{and}\quad \widetilde A_0^*=A_0. 
\end{equation}
\end{itemize}
If $\{\cG,(\Gamma_0,\Gamma_1),(\widetilde\Gamma_0,\widetilde\Gamma_1)\}$ is such that {\rm (G)},
\begin{itemize}
\item [{\rm (DD)}] the ranges of $(\Gamma_0,\Gamma_1)^\top:\dom T\rightarrow\cG\times \cG$ and $(\widetilde\Gamma_0,\widetilde\Gamma_1)^\top:\dom \widetilde T\rightarrow\cG\times \cG$ are dense,
\end{itemize}
and {\rm (M)} hold, then $\{\cG,(\Gamma_0,\Gamma_1),(\widetilde\Gamma_0,\widetilde\Gamma_1)\}$ is said to be a {\em quasi boundary triple} for the adjoint pair $\{S,\widetilde S\}$.
\end{definition}

In the following we shall formulate all results under minimal assumptions on the triple $\{\cG,(\Gamma_0,\Gamma_1),(\widetilde\Gamma_0,\widetilde\Gamma_1)\}$, tacitly assuming that
$\{S,\widetilde S\}$ is an adjoint pair of operators in $\sH$, and $T$ and $\widetilde T$ are cores of $S^*$ and $\widetilde S^*$, respectively. We will refer to condition (G) as 
abstract {\it Green's identity},
condition (D) and the stronger condition (DD) as {\it density conditions}, and (M) is a {\it maximality condition}. Note that condition (DD) implies that $\ran\Gamma_0$, $\ran\Gamma_1$, $\ran\widetilde\Gamma_0$, and 
$\ran\widetilde\Gamma_1$ are all dense in $\cG$ individually and, in particular, condition (DD) implies condition (D). Furthermore, it is clear from (M) that both restrictions 
$A_0=T\upharpoonright\ker\Gamma_0$ and $\widetilde A_0=\widetilde T\upharpoonright\ker\widetilde \Gamma_0$ are closed operators in $\sH$. 
Note that for $A_0$ closed the first condition $A_0^*=\widetilde A_0$ in \eqref{adjointi} already implies the second condition $\widetilde A_0^*=A_0$, and in the same 
 way $\widetilde A_0$ closed and $\widetilde A_0^*=A_0$ imply the first condition $A_0^*=\widetilde A_0$ in \eqref{adjointi}. 
Later in Section~\ref{gamsec} and 
Section~\ref{abssec} we will typically
assume that the resolvent sets of $\rho(A_0)$ and $\rho(\widetilde A_0)$ are nonempty; cf. Lemma~\ref{reslem}.

\begin{remark}\label{remmilein}
Note that if (G) holds, then for the operators $A_0=T\upharpoonright\ker\Gamma_0$ and $\widetilde A_0=\widetilde T\upharpoonright\ker\widetilde \Gamma_0$ 
one has
\begin{equation*}
 (A_0f,g)-(f,\widetilde A_0 g)=(Tf,g)-(f,\widetilde Tg)=(\Gamma_1 f,\widetilde\Gamma_0 g)-(\Gamma_0 f,\widetilde\Gamma_1 g)=0
\end{equation*}
for $f\in\dom A_0$ and $g\in\dom\widetilde A_0$, and hence the inclusions 
\begin{equation*}%\label{a0a0inc}
\widetilde A_0\subset A_0^*\quad\text{and}\quad A_0\subset\widetilde A_0^*
\end{equation*}
hold without further assumptions;
thus condition (M) is only needed for the inclusions $\widetilde A_0\supset A_0^*$ and $A_0\supset\widetilde A_0^*$.
Furthermore, condition (M) implies that the cores $T\subset S^*$ and 
$\widetilde T\subset\widetilde S^*$ are 
also extensions of $\widetilde S$ and $S$, respectively, since
$A_0\subset T$, $\widetilde A_0\subset\widetilde T$, lead to
\begin{equation}\label{incels}
\widetilde S=\widetilde T^*\subset\widetilde A_0^*=A_0\subset T\quad\text{and}\quad 
S=T^*\subset A_0^*=\widetilde A_0\subset \widetilde T. 
\end{equation}
\end{remark}

We point out that the operators $T$ and $\widetilde T$ in Definition~\eqref{qbt} 
are not unique and may also coincide with $S^*$ and $\widetilde S^*$, respectively. 
However, in this special case $T=S^*$ and $\widetilde T=\widetilde S$ the situation simplifies and reduces to the notion of 
ordinary boundary triples for adjoint pairs; cf. Proposition~\ref{ordiprop} and, e.g. \cite{BMNW08,MM02,LS83,V80}.
It is not difficult to see that the mappings $\Gamma_0,\Gamma_1,\widetilde\Gamma_0,\widetilde\Gamma_1$ are not unique, e.g. if the triple
$\{\cG,(\Gamma_0,\Gamma_1),(\widetilde\Gamma_0,\widetilde\Gamma_1)\}$ satisfies (G) then also the triple $\{\cG,(\Gamma_1,-\Gamma_0),(\widetilde\Gamma_1,-\widetilde\Gamma_0)\}$ 
satisfies (G). Therefore, by imposing condition (D) for $\Gamma_1$ and $\widetilde \Gamma_1$ and by requiring that 
the operators  $A_1:=T\upharpoonright\ker\Gamma_1$ and $\widetilde A_1:=\widetilde T\upharpoonright\ker\widetilde \Gamma_1$ satisfy condition (M), that is,
\begin{equation}\label{adjointo}
A_1^*=\widetilde A_1\quad\text{and}\quad \widetilde A_1^*=A_1, 
\end{equation}
the triple $\{\cG,(\Gamma_1,-\Gamma_0),(\widetilde\Gamma_1,-\widetilde\Gamma_0)\}$  has the same properties (G), (D), and (M) as the original triple  
$\{\cG,(\Gamma_0,\Gamma_1),(\widetilde\Gamma_0,\widetilde\Gamma_1)\}$. In particular, if $\{\cG,(\Gamma_0,\Gamma_1),(\widetilde\Gamma_0,\widetilde\Gamma_1)\}$
is a quasi boundary triple and \eqref{adjointo} holds,
then $\{\cG,(\Gamma_1,-\Gamma_0),(\widetilde\Gamma_1,-\widetilde\Gamma_0)\}$ is also a quasi boundary triple. 

The next lemma shows that the density condition (DD) in Definition~\ref{qbt}
can be concluded from the surjectivity of the maps $\Gamma_0$ and $\widetilde\Gamma_0$.

\begin{lemma}\label{ddlem}
Assume that the triple $\{\cG,(\Gamma_0,\Gamma_1),(\widetilde\Gamma_0,\widetilde\Gamma_1)\}$ for the adjoint pair $\{S,\widetilde S\}$ satisfies {\rm (G)} and {\rm (M)}.
Then the following assertions hold.
\begin{itemize}
 \item [{\rm (i)}] If $\ran\Gamma_0$ is dense in $\cG$ and $\ran\widetilde\Gamma_0=\cG$, then $\ran(\Gamma_0,\Gamma_1)^\top$ is dense in $\cG\times\cG$;
 \item [{\rm (ii)}] If $\ran\widetilde\Gamma_0$ is dense in $\cG$ and $\ran\Gamma_0=\cG$, then $\ran(\widetilde\Gamma_0,\widetilde\Gamma_1)^\top$ is dense in $\cG\times \cG$.
\end{itemize}
In particular, if $\ran\Gamma_0=\ran\widetilde\Gamma_0=\cG$, then condition {\rm (DD)} in Definition~\ref{qbt} is satisfied and 
$\{\cG,(\Gamma_0,\Gamma_1),(\widetilde\Gamma_0,\widetilde\Gamma_1)\}$ is a quasi boundary triple for the adjoint pair $\{S,\widetilde S\}$.
\end{lemma}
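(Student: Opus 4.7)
The plan is to prove density of $\ran(\Gamma_0,\Gamma_1)^\top$ by the standard orthogonality argument: take a vector $(\varphi,\psi)\in\cG\times\cG$ orthogonal to the range, and show $\varphi=\psi=0$. The tools at our disposal are Green's identity (G), the maximality (M) in the form $A_0^*=\widetilde A_0$, and the two surjectivity/density hypotheses on $\Gamma_0$ and $\widetilde\Gamma_0$. The symmetry between (i) and (ii) is obvious (interchange tilded/untilded objects), so I would write out (i) in detail and deduce (ii) by duality, and the concluding assertion follows by combining (i) and (ii).

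For (i), suppose $(\varphi,\psi)\in\cG\times\cG$ satisfies
\[
(\Gamma_0 f,\varphi)_\cG+(\Gamma_1 f,\psi)_\cG=0\qquad\text{for all }f\in\dom T.
\]
Using the hypothesis $\ran\widetilde\Gamma_0=\cG$, I would pick $g\in\dom\widetilde T$ with $\widetilde\Gamma_0 g=\psi$. Substituting $(\Gamma_1 f,\psi)=(\Gamma_1 f,\widetilde\Gamma_0 g)$ into the orthogonality relation and applying (G) gives, for every $f\in\dom T$,
\[
(Tf,g)-(f,\widetilde T g)=-(\Gamma_0 f,\varphi+\widetilde\Gamma_1 g)_\cG.
\]
Restricting to $f\in\dom A_0=\ker\Gamma_0$ kills the right-hand side, so $(A_0 f,g)=(f,\widetilde T g)$ for all $f\in\dom A_0$. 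This identifies $g\in\dom A_0^*$ with $A_0^*g=\widetilde T g$. Now (M) yields $A_0^*=\widetilde A_0$, so in particular $g\in\dom\widetilde A_0=\ker\widetilde\Gamma_0$, forcing $\psi=\widetilde\Gamma_0 g=0$.

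With $\psi=0$ the orthogonality relation reduces to $(\Gamma_0 f,\varphi)_\cG=0$ for all $f\in\dom T$, and the density of $\ran\Gamma_0$ gives $\varphi=0$. This proves (i). The proof of (ii) is entirely analogous, now choosing $f\in\dom T$ with $\Gamma_0 f$ equal to a prescribed vector (using $\ran\Gamma_0=\cG$), exploiting (G) from the other side, and invoking (M) in the form $\widetilde A_0^*=A_0$. The concluding statement of the lemma is immediate: if both $\Gamma_0$ and $\widetilde\Gamma_0$ are surjective, then each of (i) and (ii) applies, so both $\ran(\Gamma_0,\Gamma_1)^\top$ and $\ran(\widetilde\Gamma_0,\widetilde\Gamma_1)^\top$ are dense, which is exactly condition (DD), and together with the standing hypotheses (G) and (M) this means that $\{\cG,(\Gamma_0,\Gamma_1),(\widetilde\Gamma_0,\widetilde\Gamma_1)\}$ is a quasi boundary triple in the sense of Definition~\ref{qbt}.

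The main obstacle is the asymmetry of the hypotheses in (i)—one side is dense, the other is surjective—and the need to see why the asymmetry is exactly what Green's identity needs: the surjectivity of $\widetilde\Gamma_0$ is what allows us to \emph{realise} an arbitrary $\psi$ as a boundary value of some $g$, which is the step where (M) then forces $\psi=0$; the mere density of $\ran\Gamma_0$ is only used at the final orthogonality step for $\varphi$. Identifying which hypothesis feeds which step is the conceptual heart of the argument; the rest is bookkeeping.
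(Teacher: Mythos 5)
Your argument is correct and coincides with the paper's proof: the same orthogonality ansatz, the same use of surjectivity of $\widetilde\Gamma_0$ to realise the second component as $\widetilde\Gamma_0 g$, the same restriction to $\ker\Gamma_0$ combined with (M) in the form $A_0^*=\widetilde A_0$ to force $\psi=0$, and finally density of $\ran\Gamma_0$ to get $\varphi=0$, with (ii) and the concluding statement handled exactly as in the paper. No gaps.
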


\begin{proof}
(i) Assume that $(\varphi,\varphi')^\top\in\cG\times \cG$ is orthogonal to the range of $(\Gamma_0,\Gamma_1)^\top:\dom T\rightarrow\cG\times \cG$
and choose $g\in\dom\widetilde T$ such that $\widetilde\Gamma_0 g=\varphi'$. Then we have 
\begin{equation}\label{densli}
 0=(\Gamma_1 f,\varphi')-(\Gamma_0 f,-\varphi)=(\Gamma_1 f,\widetilde \Gamma_0 g)-(\Gamma_0 f,-\varphi)
\end{equation}
for all $f\in\dom T$
and hence the abstract Green's identity (G) becomes
\begin{equation*}
 (Tf,g)-(f,\widetilde Tg)=(\Gamma_1 f,\widetilde \Gamma_0 g)-(\Gamma_0 f,\widetilde\Gamma_1 g)=(\Gamma_0 f,-\varphi-\widetilde\Gamma_1 g).
\end{equation*}
In particular, for $f\in\ker\Gamma_0=\dom A_0$ we have 
\begin{equation*}
 (A_0f,g)-(f,\widetilde Tg)=(Tf,g)-(f,\widetilde Tg)=0
\end{equation*}
and therefore $g\in\dom A_0^*=\dom \widetilde A_0=\ker\widetilde\Gamma_0$, that is, $\varphi'=\widetilde\Gamma_0 g=0$. Now \eqref{densli}
reduces to $0=(\Gamma_0 f,\varphi)$ for all $f\in\dom T$ and as $\ran\Gamma_0$ is dense in $\cG$ we conclude $\varphi=0$. This shows that the range of
$(\Gamma_0,\Gamma_1)^\top$ is dense in $\cG\times\cG$.

(ii) can be proved in the same way as (i). It is also clear from (i) and (ii) that under the assumption $\ran\Gamma_0=\ran\widetilde\Gamma_0=\cG$ the ranges of both $(\Gamma_0,\Gamma_1)^\top$ and $(\widetilde\Gamma_0,\widetilde\Gamma_1)^\top$ 
are dense in $\cG\times\cG$, that is, condition (DD) in Definition~\ref{qbt} holds and hence 
$\{\cG,(\Gamma_0,\Gamma_1),(\widetilde\Gamma_0,\widetilde\Gamma_1)\}$ is a quasi boundary triple.
\end{proof}

We note that in the situation of Lemma~\ref{ddlem} with $\ran\Gamma_0=\ran\widetilde\Gamma_0=\cG$ the 
quasi boundary triple $\{\cG,(\Gamma_0,\Gamma_1),(\widetilde\Gamma_0,\widetilde\Gamma_1)\}$ can be regarded as a {\it generalized boundary triple} 
in the context of adjoint pairs; for the case of symmetric operators see \cite{DM95} and, e.g. \cite{BMN17,DHMS06,DHMS12}.

\begin{lemma}\label{lemmakers}
Assume that the triple $\{\cG,(\Gamma_0,\Gamma_1),(\widetilde\Gamma_0,\widetilde\Gamma_1)\}$ for the adjoint pair $\{S,\widetilde S\}$ satisfies {\rm (G)}, {\rm (D)}, and {\rm (M)}.
 Then
 \begin{equation}\label{aq}
  \dom S=\ker\widetilde\Gamma_0\cap\ker\widetilde\Gamma_1\quad\text{and}\quad
  \dom \widetilde S=\ker\Gamma_0\cap\ker\Gamma_1.
 \end{equation}
\end{lemma}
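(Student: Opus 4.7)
The plan is to prove the second identity $\dom\widetilde S = \ker\Gamma_0\cap\ker\Gamma_1$; the first follows by symmetrically swapping the roles of $(T,\Gamma_0,\Gamma_1)$ and $(\widetilde T,\widetilde\Gamma_0,\widetilde\Gamma_1)$ throughout (together with the density assumption (D) applied to $\ran\Gamma_0$ instead of $\ran\widetilde\Gamma_0$).

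For the forward inclusion, the key observation is that Remark~\ref{remmilein} already does most of the work. Invoking condition (M), the chain $\widetilde S=\widetilde T^*\subset\widetilde A_0^*=A_0\subset T$ yields $\dom\widetilde S\subset\dom A_0=\ker\Gamma_0$, and moreover $Tg_0=\widetilde S g_0$ for every $g_0\in\dom\widetilde S$. Thus the nontrivial content is the inclusion $\dom\widetilde S\subset\ker\Gamma_1$. For this I would apply Green's identity (G) with $f=g_0\in\dom T$ and arbitrary $h\in\dom\widetilde T$:
\begin{equation*}
 (Tg_0,h)_\sH-(g_0,\widetilde Th)_\sH=(\Gamma_1 g_0,\widetilde\Gamma_0 h)_\cG-(\Gamma_0 g_0,\widetilde\Gamma_1 h)_\cG.
\end{equation*}
The term $(\Gamma_0 g_0,\widetilde\Gamma_1 h)_\cG$ vanishes because $\Gamma_0 g_0=0$. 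The left-hand side also vanishes: since $g_0\in\dom\widetilde S=\dom\widetilde T^*$ with $\widetilde S g_0=Tg_0$, one has $(g_0,\widetilde Th)_\sH=(\widetilde S g_0,h)_\sH=(Tg_0,h)_\sH$. So $(\Gamma_1 g_0,\widetilde\Gamma_0 h)_\cG=0$ for all $h\in\dom\widetilde T$, and the density condition (D) on $\ran\widetilde\Gamma_0$ forces $\Gamma_1 g_0=0$.

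The reverse inclusion is a one-line consequence of (G): for $g_0\in\ker\Gamma_0\cap\ker\Gamma_1\subset\dom T$, both boundary terms in (G) vanish, giving $(Tg_0,h)_\sH=(g_0,\widetilde Th)_\sH$ for every $h\in\dom\widetilde T$, which is exactly the statement that $g_0\in\dom\widetilde T^*=\dom\widetilde S$ (with $\widetilde S g_0=Tg_0$). The only place where any real work happens is the orthogonality argument in the previous paragraph, and even that is not a genuine obstacle — the maximality condition (M), routed through Remark~\ref{remmilein}, traps $\dom\widetilde S$ inside $\ker\Gamma_0$ for free, so (G) and (D) are only needed to pin down the single remaining boundary component.
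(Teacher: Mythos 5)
Your proof is correct and follows essentially the same route as the paper's: trap $\dom\widetilde S$ (resp.\ $\dom S$) inside $\ker\Gamma_0$ (resp.\ $\ker\widetilde\Gamma_0$) via (M) and the inclusions of Remark~\ref{remmilein}, then use Green's identity (G) together with the density condition (D) to kill the remaining boundary component, with the reverse inclusion being an immediate consequence of (G). The only difference is that you spell out the second identity while the paper spells out the first, which is a purely symmetric choice.
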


\begin{proof}
 We will verify the identity $\dom S=\ker\widetilde\Gamma_0\cap\ker\widetilde\Gamma_1$;
 the second identity in \eqref{aq} can be shown in the same way.
 For this, let us consider some fixed $g\in\dom S=\dom T^*$. From $A_0\subset T$ and 
 the maximality condition (M) we obtain 
 $T^*\subset A_0^*=\widetilde A_0\subset\widetilde T$ (see \eqref{incels}) and hence $g\in\dom\widetilde A_0=\ker\widetilde\Gamma_0\subset\dom \widetilde T$. The abstract Green's identity (G) yields
 \begin{equation*}
 \begin{split}
  0&=(Tf,g)-(f, T^*g)\\
  &=(Tf,g)-(f,\widetilde Tg)\\
  &=(\Gamma_1 f,\widetilde \Gamma_0 g)-(\Gamma_0 f,\widetilde\Gamma_1 g)\\
  &=-(\Gamma_0 f,\widetilde\Gamma_1 g)
 \end{split}
 \end{equation*}
 for all $f\in\dom T$ and as $\ran\Gamma_0$ is dense in $\cG$ by condition (D) it follows that also $g\in\ker\widetilde\Gamma_1$. Conversely, for $g\in \ker\widetilde\Gamma_0\cap\ker\widetilde\Gamma_1$ the abstract Green's identity 
 implies
 \begin{equation*}
  (Tf,g)-(f,\widetilde Tg)=(\Gamma_1 f,\widetilde \Gamma_0 g)-(\Gamma_0 f,\widetilde\Gamma_1 g)=0
 \end{equation*}
 for all $f\in\dom T$. This shows $g\in\dom T^*=\dom S$.
\end{proof}

\begin{lemma}
 Assume that the triple
 $\{\cG,(\Gamma_0,\Gamma_1),(\widetilde\Gamma_0,\widetilde\Gamma_1)\}$ for the adjoint pair $\{S,\widetilde S\}$ satisfies {\rm (G)} and {\rm (DD)}. Then the mappings 
 \begin{equation}\label{clos}
  \begin{pmatrix} \Gamma_0\\ \Gamma_1\end{pmatrix}:\dom T\rightarrow\cG\times \cG\quad 
  \text{and}\quad \begin{pmatrix}\widetilde\Gamma_0\\ \widetilde\Gamma_1\end{pmatrix}:\dom \widetilde T\rightarrow\cG\times \cG
 \end{equation}
are both closable with respect to the graph norm of $T$ and $\widetilde T$, respectively. 
In particular, the individual mappings $\Gamma_0,\Gamma_1:\dom T\rightarrow\cG$ and $\widetilde\Gamma_0,\widetilde\Gamma_1:\dom\widetilde T\rightarrow\cG$
are closable.
\end{lemma}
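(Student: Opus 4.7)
The plan is a duality argument that converts the limit $f_n\to 0$, $Tf_n\to 0$ (in the graph norm of $T$) into a scalar identity via Green's identity (G), and then uses (DD) to kill it. I would prove the closability of $(\Gamma_0,\Gamma_1)^\top$ in detail; the closability of $(\widetilde\Gamma_0,\widetilde\Gamma_1)^\top$ follows by the symmetric argument with the roles of $T$ and $\widetilde T$ interchanged.

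So take a sequence $f_n\in\dom T$ with $f_n\to 0$ and $Tf_n\to 0$ in $\sH$, and assume that $\Gamma_0 f_n\to\varphi$ and $\Gamma_1 f_n\to\varphi'$ in $\cG$; the goal is $\varphi=\varphi'=0$. For any fixed test element $g\in\dom\widetilde T$, apply (G) to the pair $(f_n,g)$ and pass to the limit. The left-hand side $(Tf_n,g)_\sH-(f_n,\widetilde Tg)_\sH$ vanishes (both factors involving $f_n$ tend to zero in $\sH$), while the right-hand side converges to $(\varphi',\widetilde\Gamma_0 g)_\cG-(\varphi,\widetilde\Gamma_1 g)_\cG$. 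Hence
\[
(\varphi',\widetilde\Gamma_0 g)_\cG-(\varphi,\widetilde\Gamma_1 g)_\cG=0\qquad\text{for every } g\in\dom\widetilde T,
\]
which means that $(\varphi',-\varphi)\in\cG\times\cG$ is orthogonal to the range of $(\widetilde\Gamma_0,\widetilde\Gamma_1)^\top:\dom\widetilde T\to\cG\times\cG$. The assumption (DD) says precisely that this range is dense in $\cG\times\cG$, so $\varphi=\varphi'=0$, as desired.

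Individual closability of the maps $\Gamma_0,\Gamma_1,\widetilde\Gamma_0,\widetilde\Gamma_1$ is then obtained as a consequence: once the closure of the graph of $(\Gamma_0,\Gamma_1)^\top$ is a graph of a single-valued operator into $\cG\times\cG$, projecting onto each coordinate shows that the closures of the graphs of $\Gamma_0$ and $\Gamma_1$ are themselves graphs, and similarly for $\widetilde\Gamma_0,\widetilde\Gamma_1$.

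No serious obstacle is anticipated: the proof reduces entirely to passing to the limit in (G) and then recognising the resulting orthogonality against a dense set. The only small point to keep straight is the handedness of the duality: the closability of $(\Gamma_0,\Gamma_1)^\top$ uses the density in $\cG\times\cG$ of the range of $(\widetilde\Gamma_0,\widetilde\Gamma_1)^\top$, and vice versa for the other half of the statement, both being furnished by condition (DD).
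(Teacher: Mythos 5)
Your main argument is exactly the paper's proof: pass to the limit in Green's identity (G) along a sequence with $f_n\to 0$, $Tf_n\to 0$, $\Gamma_0 f_n\to\varphi$, $\Gamma_1 f_n\to\varphi'$, obtain $(\varphi',\widetilde\Gamma_0 g)-(\varphi,\widetilde\Gamma_1 g)=0$ for all $g\in\dom\widetilde T$, and invoke the density of $\ran(\widetilde\Gamma_0,\widetilde\Gamma_1)^\top$ from (DD); the roles of the two triples of maps are interchanged for the second mapping, just as in the paper. One caveat concerns your closing remark: ``projecting onto each coordinate'' is not a valid general principle, since the closure of the graph of $\Gamma_0$ is not contained in the projection of the closure of the joint graph (sequences along which $\Gamma_1 f_n$ diverges contribute to the former but not to the latter); indeed, a map $x\mapsto (Ax,Bx)$ can be closable with $A$ non-closable, e.g. $Ax=\bigl(\sum_k x_k\bigr)e_1$, $Bx=(kx_k)_k$ on the finite sequences in $\ell^2$. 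The paper asserts the individual closability with the same ``in particular'' and no separate argument, so you are not worse off than the source, but your projection justification should not be presented as a proof of that clause.
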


\begin{proof}
 Consider a sequence $f_n\in\dom T$ such that $f_n\rightarrow 0$ and $Tf_n\rightarrow 0$
 as $n\rightarrow\infty$ and assume that $\Gamma_0 f_n\rightarrow \varphi$ and 
 $\Gamma_1 f_n\rightarrow \varphi'$, $n\rightarrow\infty$, for some $\varphi,\varphi'\in\cG$. Using (G) it follows that 
 \begin{equation*}
  \begin{split}
   0&=\lim_{n\rightarrow\infty} \bigl((Tf_n,g)-(f_n,\widetilde Tg)\bigr)\\
    &=\lim_{n\rightarrow\infty} \bigl((\Gamma_1 f_n,\widetilde \Gamma_0 g)-(\Gamma_0 f_n,\widetilde \Gamma_1 g)\bigr)\\
    &=(\varphi',\widetilde \Gamma_0 g)-(\varphi,\widetilde \Gamma_1 g)
  \end{split}
 \end{equation*}
and as $\ran(\widetilde\Gamma_0,\widetilde\Gamma_1)^\top$ is dense in $\cG\times\cG$ by (DD) we conclude $\varphi=\varphi'=0$.
This proves that the first mapping in \eqref{clos} is closable and the same
argument applies to the second mapping in \eqref{clos}.
\end{proof}

\begin{proposition}\label{ordiprop}
 Let 
 $\{\cG,(\Gamma_0,\Gamma_1),(\widetilde\Gamma_0,\widetilde\Gamma_1)\}$ be a quasi boundary triple for the adjoint pair $\{S,\widetilde S\}$. Then
 the following are equivalent:
 \begin{itemize}
  \item [{\rm (i)}] $T=S^*$ and $\widetilde T=\widetilde S^*$
  \item [{\rm (ii)}] $\ran(\Gamma_0,\Gamma_1)^\top=\cG\times\cG$ and $\ran(\widetilde\Gamma_0,\widetilde\Gamma_1)^\top=\cG\times\cG$.
 \end{itemize}
\end{proposition}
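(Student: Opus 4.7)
I would prove the two implications separately, both built around Green's identity (G) and Lemma \ref{lemmakers}.

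For (ii) $\Rightarrow$ (i), I would fix $f\in\dom S^*$ and, using that $T$ is a core of $S^*$, pick $f_n\in\dom T$ with $f_n\to f$ and $Tf_n\to S^*f$ in $\sH$. For any $g\in\dom\widetilde T$, Green's identity gives
\[
 (Tf_n,g)_\sH-(f_n,\widetilde T g)_\sH=(\Gamma_1 f_n,\widetilde\Gamma_0 g)_\cG-(\Gamma_0 f_n,\widetilde\Gamma_1 g)_\cG,
\]
and the left side converges to $(S^*f,g)-(f,\widetilde T g)$. Using surjectivity of $(\widetilde\Gamma_0,\widetilde\Gamma_1)^\top$ from (ii), every $(u,v)\in\cG\times\cG$ is realized as $(\widetilde\Gamma_0 g,\widetilde\Gamma_1 g)$, so $(\Gamma_1 f_n,u)-(\Gamma_0 f_n,v)$ converges for all $(u,v)$; specializing $u=0$ or $v=0$ gives convergence of $(\Gamma_0 f_n,v)$ and $(\Gamma_1 f_n,u)$ for each argument. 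The uniform boundedness principle then produces weak limits $\Gamma_0 f_n \rightharpoonup \beta$ and $\Gamma_1 f_n \rightharpoonup \alpha$ in $\cG$. By the other surjectivity in (ii), choose $f'\in\dom T$ with $\Gamma_0 f'=\beta$, $\Gamma_1 f'=\alpha$. Applying Green's identity to $f_n-f'$ and passing to the limit yields $(S^*f-Tf',g)_\sH=(f-f',\widetilde T g)_\sH$ for all $g\in\dom\widetilde T$, which extends by continuity to $g\in\dom\widetilde S^*$ since $\widetilde T$ is a core. Hence $f-f'\in\dom(\widetilde S^*)^*=\dom\widetilde S$, and by \eqref{incels} the inclusion $\dom\widetilde S\subset\dom T$ puts $f\in\dom T$. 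This gives $T=S^*$; the symmetric argument with roles of tilde and untilde swapped delivers $\widetilde T=\widetilde S^*$.

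For (i) $\Rightarrow$ (ii), the key preliminary is that $\widetilde T=\widetilde S^*$ makes $\dom\widetilde T$ complete in the graph norm, so the closability shown in the preceding lemma upgrades to closedness of each individual map, and the closed graph theorem forces $\widetilde\Gamma_0,\widetilde\Gamma_1$ to be bounded with respect to the graph norm. Given $(\varphi,\varphi')\in\cG\times\cG$, I would introduce the bounded conjugate-linear functional
\[
 L(g):=(\varphi',\widetilde\Gamma_0 g)_\cG-(\varphi,\widetilde\Gamma_1 g)_\cG, \qquad g\in\dom\widetilde S^*.
\]
Via Hahn-Banach extension through the isometric embedding $g\mapsto(g,\widetilde S^*g)$ into $\sH\oplus\sH$ and the Riesz representation there, obtain $h_0,h_1\in\sH$ with $L(g)=(h_0,g)_\sH+(h_1,\widetilde S^*g)_\sH$. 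By Lemma \ref{lemmakers}, $L$ vanishes on $\dom S$, and because $S\subset\widetilde S^*$ yields $\widetilde S^*g=Sg$ for $g\in\dom S$, the identity $(h_0,g)+(h_1,Sg)=0$ on $\dom S$ forces $h_1\in\dom S^*$ with $S^*h_1=-h_0$. Setting $f:=-h_1\in\dom S^*=\dom T$ transforms $L$ into $(Tf,g)-(f,\widetilde T g)$, which by Green's identity equals $(\Gamma_1 f,\widetilde\Gamma_0 g)-(\Gamma_0 f,\widetilde\Gamma_1 g)$. Comparing with the definition of $L$ gives $(\Gamma_1 f-\varphi',\widetilde\Gamma_0 g)-(\Gamma_0 f-\varphi,\widetilde\Gamma_1 g)=0$ for all $g\in\dom\widetilde T$, and (DD) for $(\widetilde\Gamma_0,\widetilde\Gamma_1)^\top$ yields $\Gamma_0 f=\varphi$ and $\Gamma_1 f=\varphi'$. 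The analogous argument produces surjectivity of $(\widetilde\Gamma_0,\widetilde\Gamma_1)^\top$.

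The main difficulty I anticipate is the direction (ii) $\Rightarrow$ (i): without already knowing (i), the boundary maps are not bounded on $\dom S^*$, so I cannot simply pass to strong limits of $\Gamma_i f_n$. The trick is to use the surjectivity of the tilde-side boundary map to force weak convergence of the boundary values via the uniform boundedness principle, and then to absorb these weak limits into a genuine $f'\in\dom T$ using the other surjectivity, so that the remaining difference ends up in $\dom\widetilde S\subset\dom T$. In (i) $\Rightarrow$ (ii), the delicate point is that the $\sH\oplus\sH$-Riesz representation produces $h_0,h_1$ automatically arranged so that $h_1\in\dom S^*$, which is precisely the content of Lemma \ref{lemmakers} applied through $L|_{\dom S}=0$.
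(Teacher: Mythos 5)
Your argument is correct, and the two implications relate to the paper's proof in different ways. For (ii)$\,\Rightarrow\,$(i) you are essentially doing what the paper does, only reorganized: the paper proves that $T$ is closed by taking an arbitrary graph-convergent sequence in $\dom T$, while you start from $f\in\dom S^*$ and a core sequence; in both cases the surjectivity of $(\widetilde\Gamma_0,\widetilde\Gamma_1)^\top$ turns Green's identity into weak Cauchyness of the boundary values, the uniform boundedness principle gives weak limits, the surjectivity of $(\Gamma_0,\Gamma_1)^\top$ realizes these limits by some $f'\in\dom T$, and the difference lands in $\dom\widetilde T^*=\dom\widetilde S\subset\dom T$ via \eqref{incels}. (Your ``extend by continuity to $\dom\widetilde S^*$'' detour is harmless but unnecessary: the identity on $\dom\widetilde T$ already says $f-f'\in\dom\widetilde T^*=\dom\widetilde S$.) For (i)$\,\Rightarrow\,$(ii) your route is genuinely different. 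The paper keeps the density from (DD) and proves that the range is \emph{closed}: it normalizes $f_n$ orthogonal to the graph of $\widetilde S$ (using Lemma~\ref{lemmakers}), shows $(f_n,Tf_n)$ is weakly Cauchy in $\sH\times\sH$ by testing against a decomposition adapted to $\widetilde S$ and $\widetilde S^*$, and extracts a weak limit $(f,Tf)$ using the closedness of $T=S^*$. You instead produce an explicit preimage of an arbitrary $(\varphi,\varphi')$: closability of the boundary maps (the lemma preceding Proposition~\ref{ordiprop}) plus completeness of $\dom\widetilde T=\dom\widetilde S^*$ in the graph norm upgrades, via the closed graph theorem, to boundedness of $\widetilde\Gamma_0,\widetilde\Gamma_1$ in the graph norm; the Riesz representation of your functional $L$ on the graph of $\widetilde S^*$, combined with the vanishing of $L$ on $\dom S=\ker\widetilde\Gamma_0\cap\ker\widetilde\Gamma_1$ (Lemma~\ref{lemmakers}), places the representer in $\dom S^*=\dom T$, and (DD) identifies its boundary values with $(\varphi,\varphi')$. (Hahn--Banach is not needed here: the graph is a closed subspace, so Riesz applies directly, or one extends by zero on the orthogonal complement.) Your version is more structural and constructive and makes visible exactly where hypothesis (i) enters (completeness of the graph norm domain, so that density can be upgraded to surjectivity), at the cost of invoking the closed graph theorem; the paper's version stays entirely within sequential weak-compactness arguments and never needs boundedness of the boundary maps. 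The sketched ``analogous argument'' for $(\widetilde\Gamma_0,\widetilde\Gamma_1)^\top$ indeed goes through after conjugating Green's identity and adjusting signs in the functional.
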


\begin{proof}
 (i)$\,\Rightarrow\,$(ii): Since $\ran(\Gamma_0,\Gamma_1)^\top$ and $\ran(\widetilde\Gamma_0,\widetilde\Gamma_1)^\top$ are both dense in $\cG\times\cG$ by (DD) it remains to show that both ranges are closed. We will provide the argument for $\ran(\Gamma_0,\Gamma_1)^\top$; the same reasoning applies to $\ran(\widetilde\Gamma_0,\widetilde\Gamma_1)^\top$. Consider a sequence $(\Gamma_0 f_n,\Gamma_1 f_n)^\top$, where 
 $f_n\in\dom T$  and assume that 
 \begin{equation}\label{limmis}
  \Gamma_0 f_n\rightarrow \varphi\quad\text{and}\quad \Gamma_1 f_n\rightarrow\varphi'
 \end{equation}
as $n\rightarrow\infty$ for some $\varphi,\varphi'\in\cG$. Note first that by the inclusion $\widetilde S\subset T=S^*$ in \eqref{incels} there exists a closed subspace $\cV\subset\sH\times\sH$
such that $T=\widetilde S\oplus\cV$. Therefore,
since $\dom\widetilde S=\ker\Gamma_0\cap\ker\Gamma_1$ by Lemma~\ref{lemmakers} it is no restriction 
to assume that $f_n\in\dom T$ satisfy 
\begin{equation}\label{mmmk}
 \left(\begin{pmatrix} f_n \\ Tf_n\end{pmatrix},\begin{pmatrix} k \\ \widetilde Sk\end{pmatrix}\right)=0,\quad k\in\dom\widetilde S.
\end{equation}
Let $(h,h')^\top\in\sH\times\sH$ be arbitrary and observe that there exist $g\in\dom\widetilde S^*$ and $k\in\dom\widetilde S$ such that 
\begin{equation*}
 \begin{pmatrix}
  h \\ h'
 \end{pmatrix}
= \begin{pmatrix} -\widetilde S^* g\\ g \end{pmatrix} + \begin{pmatrix} k \\ \widetilde Sk\end{pmatrix}.
\end{equation*}
Using \eqref{mmmk}, $\widetilde S^*=\widetilde T$, and the abtract Green's identity (G) we compute 
\begin{equation*}
 \begin{split}
  \left(\begin{pmatrix} f_n \\ Tf_n\end{pmatrix},\begin{pmatrix} h \\ h'\end{pmatrix}\right)
  &=\left(\begin{pmatrix} f_n \\ Tf_n\end{pmatrix},\begin{pmatrix} -\widetilde S^* g\\ g \end{pmatrix} +\begin{pmatrix} k \\ \widetilde Sk\end{pmatrix}\right)\\
  &=(Tf_n,g)-(f_n,\widetilde S^*g)\\
  &=(Tf_n,g)-(f_n,\widetilde Tg)\\
  &=(\Gamma_1 f_n,\widetilde\Gamma_0 g)-(\Gamma_0 f_n,\widetilde\Gamma_1 g) \rightarrow (\varphi',\widetilde\Gamma_0 g)-(\varphi,\widetilde\Gamma_1 g)
 \end{split}
\end{equation*}
as $n\rightarrow\infty$, where \eqref{limmis} entered in the last step. It follows that $(f_n,T f_n)^\top$ is a weak Cauchy sequence in $\sH\times\sH$ and 
hence weakly bounded and
thus bounded. This implies that there exists a weakly convergent subsequence, again denoted by $(f_n,T f_n)^\top$ with weak limit 
$(f,f')^\top \in\myoverline T$. By assumption $T=S^*$ and hence $f'=Tf$. Now we conclude
\begin{equation*}
\begin{split}
 (\Gamma_1 f,\widetilde\Gamma_0 g)-(\Gamma_0f,\widetilde\Gamma_1 g)&=(Tf,g)-(f,\widetilde Tg)\\
 &=\lim_{n\rightarrow\infty} \bigl((Tf_n,g)-(f_n,\widetilde Tg)\bigr)\\
 &=\lim_{n\rightarrow\infty} \bigl((\Gamma_1 f_n,\widetilde\Gamma_0 g)-(\Gamma_0f_n,\widetilde\Gamma_1 g)\bigr)\\
 &=(\varphi',\widetilde\Gamma_0 g)-(\varphi,\widetilde\Gamma_1 g),
\end{split}
 \end{equation*}
that is, 
\begin{equation*}
 \left(\begin{pmatrix}\Gamma_1 f - \varphi' \\ \varphi-\Gamma_0 f \end{pmatrix},\begin{pmatrix}\widetilde\Gamma_0 g \\ \widetilde\Gamma_1 g \end{pmatrix}\right)=0
\end{equation*}
for all $g\in\dom \widetilde T$. As $\ran(\widetilde\Gamma_0,\widetilde\Gamma_1)^\top$ is dense in $\cG\times\cG$ by (DD) we obtain 
$\varphi=\Gamma_0 f$ and $\varphi'=\Gamma_1 f$, in particular, $(\varphi,\varphi')^\top\in\ran(\Gamma_0,\Gamma_1)^\top$ and therefore 
$\ran(\Gamma_0,\Gamma_1)^\top$ is closed.
\vskip 0.15cm
\noindent
 (ii)$\,\Rightarrow\,$(i): Since $T$ and $\widetilde T$ are cores of $S^*$ and $\widetilde S^*$, respectively, it suffices to verify that 
 $T$ and $\widetilde T$ are closed. We will provide the proof for $T$; the same argument can be used to show that $\widetilde T$ is closed. 
 Consider a sequence $f_n$ in $\dom T$ such that $f_n\rightarrow f$ and $Tf_n\rightarrow f'$ as $n\rightarrow\infty$ for some $f,f'\in\sH$.
 Let $(\psi,\psi')^\top\in\cG\times\cG$ and choose $g\in\dom\widetilde T$ such that $\widetilde\Gamma_0 g=\psi'$ and 
 $\widetilde\Gamma_1 g=-\psi$, which is possible by our assumptions. Using (G) we compute
 \begin{equation*}
  \begin{split}
   \left(\begin{pmatrix} \Gamma_0 f_n\\ \Gamma_1 f_n\end{pmatrix},\begin{pmatrix} \psi \\ \psi'\end{pmatrix} \right)
   &=(\Gamma_1 f_n,\widetilde\Gamma_0 g)-(\Gamma_0 f_n,\widetilde\Gamma_1 g)\\
   &=(T_nf,g)-(f_n,\widetilde Tg)\rightarrow (f',g)-(f,\widetilde Tg)
  \end{split}
 \end{equation*}
as $n\rightarrow\infty$. This shows that $(\Gamma_0 f_n, \Gamma_1 f_n)^\top$ is a weak Cauchy sequence in $\cG\times\cG$ and thus weakly bounded and hence bounded. Therefore, there exists a weakly convergent subsequence, again denoted by $(\Gamma_0 f_n, \Gamma_1 f_n)^\top$ 
with weak limit $(\varphi,\varphi')^\top\in\cG\times\cG$. By assumption there exists $h\in\dom T$ such that $\Gamma_0 h=\varphi$
and $\Gamma_1 h=\varphi'$. Now it follows for $g\in\dom\widetilde T$ that
\begin{equation*}
 \begin{split}
  (f',g)-(f,\widetilde Tg)
  &=\lim_{n\rightarrow\infty}\bigl((T f_n,g)-(f_n,\widetilde T g)\bigr)\\
  &=\lim_{n\rightarrow\infty}\bigl((\Gamma_1 f_n,\widetilde\Gamma_0 g)-(\Gamma_0 f_n,\widetilde\Gamma_1 g)\bigr)\\
  &=(\varphi',\widetilde\Gamma_0 g)-(\varphi,\widetilde\Gamma_1 g)\\
  &=(\Gamma_1 h,\widetilde\Gamma_0 g)-(\Gamma_0 h,\widetilde\Gamma_1 g)\\
  &=(Th,g)-(h,\widetilde Tg)
 \end{split}
\end{equation*}
and hence $(h-f,\widetilde Tg)=(Th-f',g)$ for all $g\in\dom\widetilde T$. This implies $h-f\in\dom\widetilde T^*$
and $\widetilde T^* (h-f)=Th-f'$. As $\widetilde T^*=\widetilde S\subset T$ by  \eqref{incels} and $h\in\dom T$ we conclude $f\in\dom T$ and $Tf=f'$.
We have shown that $T$ is closed.
\end{proof}

The next result is of a slightly different nature: it provides a method to verify that a pair of given operators $T$ and $\widetilde T$
form a core of the adjoints of certain (minimal) operators $S$ and $\widetilde S$, respectively. To emphasize this different point of view 
we shall denote the assumptions on the boundary maps here by (G'), (D') or (DD'), and (M').

\begin{theorem}\label{ratethm}
 Let $\sH$ and $\cG$ be Hilbert spaces and let $T$ and $\widetilde T$ be operators in $\sH$. Assume that 
 \begin{equation}\label{qbtinvthm}
  \Gamma_0,\Gamma_1:\dom T\rightarrow\cG\quad\text{and}\quad\widetilde\Gamma_0,\widetilde\Gamma_1:\dom \widetilde T\rightarrow\cG
 \end{equation}
 are linear mappings such that 
 \begin{itemize}
  \item [{\rm (G')}] the abstract Green's identity 
  \begin{equation*}
 (Tf,g)-(f,\widetilde Tg)=(\Gamma_1 f,\widetilde \Gamma_0 g)-(\Gamma_0 f,\widetilde\Gamma_1 g)
\end{equation*}
holds for all $f\in\dom T$ and $g\in\dom \widetilde T$,
  \item [{\rm (D')}] the ranges of $\Gamma_0:\dom T\rightarrow\cG$ and 
  $\widetilde\Gamma_0:\dom \widetilde T\rightarrow\cG$ are dense,
 \item [{\rm (M')}] the operators $A_0:=T\upharpoonright\ker\Gamma_0$ and $\widetilde A_0:=\widetilde T\upharpoonright\ker\widetilde \Gamma_0$ satisfy 
\begin{equation*}
A_0^*=\widetilde A_0\quad\text{and}\quad \widetilde A_0^*=A_0.
\end{equation*}
 \end{itemize}
If, in addition, $\ker\widetilde\Gamma_0\cap\ker\widetilde\Gamma_1$ and $\ker\Gamma_0\cap\ker\Gamma_1$ are dense in $\sH$, then 
the operators 
\begin{equation*}
\begin{split}
 Sf:= \widetilde Tf,&\quad f\in \dom S=\ker\widetilde\Gamma_0\cap\ker\widetilde\Gamma_1,\\
 \widetilde Sg:=  Tg,&\quad g\in\dom \widetilde S=\ker\Gamma_0\cap\ker\Gamma_1,
\end{split}
\end{equation*}
are closed and form an adjoint pair such that $T$ and $\widetilde T$ are cores of $S^*$ and $\widetilde S^*$, respectively. Furthermore, if the mappings in \eqref{qbtinvthm} satisfy the conditions
{\rm (G')},
\begin{itemize}
\item [{\rm (DD')}] the ranges of $(\Gamma_0,\Gamma_1)^\top:\dom T\rightarrow\cG\times \cG$ and $(\widetilde\Gamma_0,\widetilde\Gamma_1)^\top:\dom \widetilde T\rightarrow\cG\times \cG$ are dense,
\end{itemize}
and {\rm (M')}, then
$\{\cG,(\Gamma_0,\Gamma_1),(\widetilde\Gamma_0,\widetilde\Gamma_1)\}$ is a quasi boundary triple for the adjoint pair $\{S,\widetilde S\}$.
\end{theorem}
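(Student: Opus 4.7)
The plan is to establish, in order: densely-definedness of $S$ and $\widetilde S$, the adjoint pair property, the inclusions $T\subset S^*$ and $\widetilde T\subset\widetilde S^*$, and finally the key identities $T^*=S$ and $\widetilde T^*=\widetilde S$, which yield closedness of $S,\widetilde S$ and the core property in one stroke. The quasi boundary triple conclusion is then immediate from Definition~\ref{qbt} since (G'), (DD'), (M') are precisely (G), (DD), (M) for the adjoint pair just constructed.

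Concretely, $S$ and $\widetilde S$ are densely defined by the added hypothesis, and using (G') on the subspaces $\dom S=\ker\widetilde\Gamma_0\cap\ker\widetilde\Gamma_1$ and $\dom\widetilde S=\ker\Gamma_0\cap\ker\Gamma_1$ kills all four boundary terms, so $(Sf,g)_\sH=(f,\widetilde Sg)_\sH$ for $f\in\dom S$, $g\in\dom\widetilde S$, which proves the adjoint pair identity \eqref{dpab}. The same vanishing argument, now with only one of the factors restricted, shows that for every $f\in\dom T$ and $g\in\dom S$ we have $(Tf,g)=(f,\widetilde Tg)=(f,Sg)$, hence $T\subset S^*$; symmetrically $\widetilde T\subset\widetilde S^*$. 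In particular $S\subset\widetilde S^*$ and $\widetilde S\subset S^*$.

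The main step is the reverse inclusions $T^*\subset S$ (and $\widetilde T^*\subset\widetilde S$); this is where (M') and (D') do real work and I expect it to be the chief obstacle. Starting from $A_0\subset T$, taking adjoints gives $T^*\subset A_0^*=\widetilde A_0$ by (M'), so any $f\in\dom T^*$ already lies in $\ker\widetilde\Gamma_0\subset\dom\widetilde T$ and satisfies $T^*f=\widetilde Tf$. To upgrade to $f\in\ker\widetilde\Gamma_1$, I plug this into (G'): for every $g\in\dom T$,
\begin{equation*}
0=(Tg,f)_\sH-(g,T^*f)_\sH=(Tg,f)_\sH-(g,\widetilde Tf)_\sH=(\Gamma_1g,\widetilde\Gamma_0 f)_\cG-(\Gamma_0g,\widetilde\Gamma_1 f)_\cG=-(\Gamma_0g,\widetilde\Gamma_1 f)_\cG,
\end{equation*}
and then (D') (density of $\ran\Gamma_0$) forces $\widetilde\Gamma_1f=0$. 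Hence $f\in\dom S$ and $Sf=\widetilde Tf=T^*f$, giving $T^*\subset S$. Combined with the inclusion $S\subset T^*$ established above (which is just a rereading of $T\subset S^*$ using that $S$ is densely defined), we obtain $T^*=S$. The symmetric argument gives $\widetilde T^*=\widetilde S$.

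From $T^*=S$ it follows automatically that $S$ is closed (adjoints are closed) and that $T$ is closable with $\myoverline T=T^{**}=S^{*}$ (since $T^*=S$ is densely defined), i.e.\ $T$ is a core of $S^*$; likewise $\myoverline{\widetilde T}=\widetilde S^*$. This finishes the first part of the theorem. For the second part, we now have an adjoint pair $\{S,\widetilde S\}$ of densely defined closed operators, cores $T,\widetilde T$ of $S^*,\widetilde S^*$, and the triple $\{\cG,(\Gamma_0,\Gamma_1),(\widetilde\Gamma_0,\widetilde\Gamma_1)\}$ satisfying (G), (DD), and (M) under the primed hypotheses; so Definition~\ref{qbt} applies directly and we obtain a quasi boundary triple.
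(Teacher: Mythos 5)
Your proposal is correct and follows essentially the same route as the paper: the adjoint pair property and $S\subset T^*$ come from Green's identity with the boundary terms vanishing, the key inclusion $T^*\subset S$ uses $A_0\subset T$, (M') to place $\dom T^*$ inside $\ker\widetilde\Gamma_0$, and then (G') with (D') to force $\widetilde\Gamma_1 f=0$, after which $T^*=S$ gives closedness and $\myoverline{T}=T^{**}=S^*$ exactly as in the paper's proof. The final quasi boundary triple observation is likewise identical.
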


\begin{proof}
Observe first that by (G') and the definition of $S$ and $\widetilde S$ we have 
$$
(Sf,g)-(f,\widetilde Sg)=(Tf,g)-(f,\widetilde Tg)=(\Gamma_1 f,\widetilde \Gamma_0 g)-(\Gamma_0 f,\widetilde\Gamma_1 g)=0
$$
for all $f\in\dom S$ and and $g\in\dom\widetilde S$, and hence $\{S,\widetilde S\}$ is an adjoint pair.
We will verify the identity
\begin{equation}\label{feinfein}
T^*=S
\end{equation}
and the same arguments can be used to prove the identity $\widetilde T^*=\widetilde S$.    
Note first that from $A_0\subset T$ and (M') it follows that $T^*\subset A_0^*=\widetilde A_0 \subset \widetilde T$.   
Therefore, if $g\in\dom T^*$, then $g\in\dom \widetilde A_0=\ker\widetilde\Gamma_0\subset\dom \widetilde T$ and (G') implies 
 \begin{equation*}
 \begin{split}
0&= (Tf,g)-(f,T^*g)\\
 &=(Tf,g)-(f,\widetilde Tg)\\
 &=(\Gamma_1 f,\widetilde \Gamma_0 g)-(\Gamma_0 f,\widetilde\Gamma_1 g)\\
 &=-(\Gamma_0 f,\widetilde\Gamma_1 g)
\end{split}
 \end{equation*}
for all $f\in\dom T$, and hence assumption (D') shows $\widetilde\Gamma_1 g=0$. 
Now it follows that $T^*g=\widetilde Tg$ and $g\in\ker\widetilde\Gamma_0\cap\ker\widetilde\Gamma_1$, that is, $T^*\subset S$.
For the reverse inclusion let $g\in\ker\widetilde\Gamma_0\cap\ker\widetilde\Gamma_1$. Then it follows from (G') that
\begin{equation*}
 (Tf,g)-(f,\widetilde Tg)=0
\end{equation*}
holds for all $f\in\dom T$. This implies $g\in\dom T^*$ and $T^* g=\widetilde Tg$ and hence we obtain $S\subset T^*$. 
We have shown \eqref{feinfein}. It is also clear from \eqref{feinfein} that the operator $S$ is closed and $\myoverline T=T^{**}=S^*$ shows
that $T$ is a core for $S^*$. In the same way $\widetilde T^*=\widetilde S$ implies that $\widetilde S$ is closed and that 
$\widetilde T$ is a core for $\widetilde S^*$. Finally, note that $\{\cG,(\Gamma_0,\Gamma_1),(\widetilde\Gamma_0,\widetilde\Gamma_1)\}$ is a quasi boundary triple for the adjoint pair 
$\{S,\widetilde S\}$ if the conditions (G'), (DD'), and (M') hold.
\end{proof}

We briefly illustrate the abstract theory developed in this section for the case of strongly elliptic systems on Lipschitz domains following the 
presentation in \cite{M00}.

\begin{example}\label{exi1}
Let $\Omega\subset\dR^n$, $n\geq 2$, be a (possibly) unbounded Lipschitz domain with outward unit normal $\nu$ and consider a linear second order partial differential expression
\begin{equation*}
 \cP=-\sum_{j,k=1}^n\partial_j A_{jk} \partial_k + \sum_{j=1}^n A_j\partial_j +A,
\end{equation*}
with matrix-valued coefficient functions $A_{jk},A_j,A\in L^\infty(\Omega,\dC^{m\times m})$ such that $A_{jk},A_j$, $j,k=1,\dots,n$, are Lipschitz continuous, and its
formal adjoint 
\begin{equation*}
 \widetilde\cP=-\sum_{j,k=1}^n\partial_j A_{kj}^* \partial_k - \sum_{j=1}^n \partial_j A_j^* +A^*.
\end{equation*}
We define the operators $T$ and $\widetilde T$ in $L^2(\Omega,\dC^{m\times m})$ by
\begin{equation*}
\begin{split}
 Tf=\cP f,&\qquad\dom T=\bigl\{f\in H^1(\Omega,\dC^{m\times m}): \cP f\in L^2(\Omega,\dC^{m\times m})\bigr\},\\
 \widetilde Tg=\widetilde\cP g,&\qquad\dom \widetilde T=\bigl\{g\in H^1(\Omega,\dC^{m\times m}): \widetilde\cP g\in L^2(\Omega,\dC^{m\times m})\bigr\}.
 \end{split}
\end{equation*}
Recall that the Dirichlet trace operator 
\begin{equation}\label{dt}
\tau_D:H^1(\Omega,\dC^{m\times m})\rightarrow H^{1/2}(\partial\Omega,\dC^{m\times m})
\end{equation}
is bounded and surjective. It follows from
the considerations in
\cite[Lemma 4.3 and Theorem 4.4]{M00} that the conormal derivatives
\begin{equation*}
 f\mapsto\sum_{j=1}^n\nu_j \tau_D (B_jf)\quad\text{and}\quad
 g\mapsto\sum_{j=1}^n\nu_j \tau_D (\widetilde B_jg),\quad f,g\in H^2(\Omega,\dC^{m\times m}),
 \end{equation*}
  where $B_j f=\sum_{k=1}^n A_{jk}\partial_k f$ and $\widetilde B_j g=\sum_{k=1}^n A_{kj}^*\partial_k g + A_j^*g$, 
  can be extended by continuity to mappings 
 \begin{equation*}
 \tau_N:\dom T\rightarrow H^{-1/2}(\partial\Omega,\dC^{m\times m})\quad\text{and}\quad\widetilde\tau_N:\dom \widetilde T\rightarrow H^{-1/2}(\partial\Omega,\dC^{m\times m})
\end{equation*}
such that 
\begin{equation}\label{gre}
 (Tf,g)-(f,\widetilde Tg)=\langle - \tau_N f,\tau_D g \rangle - \langle \tau_D f, - \widetilde \tau_N g \rangle
\end{equation}
holds for all $f\in\dom T$ and $g\in\dom \widetilde T$; here $\langle\cdot,\cdot\rangle$ denotes the usual dual pairing of $H^{1/2}(\partial\Omega,\dC^{m\times m})$
and $H^{-1/2}(\partial\Omega,\dC^{m\times m})$. Now choose isometric isomorphisms $\iota_\pm: H^{\pm 1/2}(\partial\Omega,\dC^{m\times m})\rightarrow L^2(\partial\Omega,\dC^{m\times m})$
that are compatible with this pairing, so that \eqref{gre} turns into 
\begin{equation}\label{gre111}
 (Tf,g)-(f,\widetilde Tg)= (- \iota_-\tau_N f,\iota_+\tau_D g ) - ( \iota_+\tau_D f, - \iota_-\widetilde \tau_N g ).
\end{equation}
Next define the operators $\widetilde S$ and $S$ as restrictions of $T$ and $\widetilde T$ onto $\ker\tau_D \cap\ker\tau_N$ and $\ker\tau_D \cap\ker\widetilde\tau_N$, respectively.
In a more explicit form we have
\begin{equation*}
\begin{split}
 \widetilde Sf&=\cP f,\\
 \dom \widetilde S&=\bigl\{f\in H^1(\Omega,\dC^{m\times m}): \cP f\in L^2(\Omega,\dC^{m\times m}), \tau_Df=0, \tau_N f=0\bigr\},
\end{split}
\end{equation*}
and
\begin{equation*}
\begin{split}
  Sg&= \widetilde \cP g,\\
 \dom S&=\bigl\{g\in H^1(\Omega,\dC^{m\times m}): \widetilde \cP g\in L^2(\Omega,\dC^{m\times m}), \tau_D g=0, \widetilde \tau_N g=0\bigr\}.
\end{split}
\end{equation*}
Let us now consider the triple $\{L^2(\partial\Omega,\dC^{m\times m}),(\iota_+\tau_D,-\iota_-\tau_N),(\iota_+\tau_D,-\iota_-\widetilde\tau_N)\}$. Observe that
(D) and (G) hold by \eqref{dt} and \eqref{gre111}. Furthermore, in the present situation we actually have 
$\ran \iota_+\tau_D=L^2(\partial\Omega,\dC^{m\times m})$. If, in addition, the resolvent set of the Dirichlet realizations 
\begin{equation*}
\begin{split}
 A_0f=\cP f,&\quad\dom A_0=\bigl\{f\in H^1(\Omega,\dC^{m\times m}): \cP f\in L^2(\Omega,\dC^{m\times m}),\,\tau_D f=0\bigr\},\\
 \widetilde A_0 g=\widetilde\cP g,&\quad\dom \widetilde A_0=\bigl\{g\in H^1(\Omega,\dC^{m\times m}): \widetilde\cP g\in L^2(\Omega,\dC^{m\times m}),\,\tau_D g=0\bigr\},
 \end{split}
\end{equation*}
is nonempty, then we also have $A_0^*=\widetilde A_0$ and $A_0=\widetilde A_0^*$ (see Lemma~\ref{reslem} below), and hence condition (M) holds; cf. \cite[Theorem 4.10]{M00} for the case of bounded Lipschitz domains. Now Theorem~\ref{ratethm} implies that $\{S,\widetilde S\}$ form an adjoint pair and that $\myoverline T=S^*$ and $\myoverline{\widetilde T}=\widetilde S^*$. 
Note that by Lemma~\ref{ddlem} the stronger density condition (DD) holds and hence $\{L^2(\partial\Omega,\dC^{m\times m}),(\iota_+\tau_D,-\iota_-\tau_N),(\iota_+\tau_D,-\iota_-\widetilde\tau_N)\}$ is a quasi 
boundary triple for the adjoint pair $\{S,\widetilde S\}$.
\end{example}

\section{$\gamma$-fields and Weyl functions}\label{gamsec}

In this section we introduce the notion of $\gamma$-fields and Weyl functions following the ideas in \cite{BL07,DM95,DHMS06} in the setting of adjoint pairs; cf. \cite{MM97,MM99,MM02}.
In the following we consider an adjoint pair $\{S,\widetilde S\}$ in $\sH$, cores 
$T$ and $\widetilde T$ of $S^*$ and $\widetilde S^*$, respectively, and a  triple 
$\{\cG,(\Gamma_0,\Gamma_1),(\widetilde\Gamma_0,\widetilde\Gamma_1)\}$ as in Definition~\ref{qbt}. In addition, we assume that the operators $A_0=T\upharpoonright\ker\Gamma_0$ and 
$\widetilde A_0=\widetilde T\upharpoonright\ker\widetilde\Gamma_0$
have nonempty resolvent sets $\rho(A_0)$ and $\rho(\widetilde A_0)$, respectively. Note that $A_0^*=\widetilde A_0$ and $\widetilde A_0^*=A_0$ in condition (M) imply $\lambda\in\rho(A_0)$ 
if and only if $\myoverline\lambda\in\rho(\widetilde A_0)$.

We provide a simple criterion for condition (M) to hold in the case that $\rho(A_0)$ and $\widetilde\rho(A_0)$ are nonempty. 

\begin{lemma}\label{reslem}
Let  $\{\cG,(\Gamma_0,\Gamma_1),(\widetilde\Gamma_0,\widetilde\Gamma_1)\}$ be a triple as in Definition~\ref{qbt} that satisfies
{\rm (G)}. Let $A_0=T\upharpoonright\ker\Gamma_0$ and 
$\widetilde A_0=\widetilde T\upharpoonright\ker\widetilde\Gamma_0$, and assume that there exists $\lambda_0\in\dC$ such that $\lambda_0\in\rho(A_0)$ 
and $\myoverline\lambda_0\in\rho(\widetilde A_0)$. Then $A_0=\widetilde A_0^*$ and  $A_0^*=\widetilde A_0$ and, in particular, condition {\rm (M)} is satisfied.
\end{lemma}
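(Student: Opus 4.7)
The plan is to combine the one-sided inclusions that Remark~\ref{remmilein} already derives from {\rm (G)} alone with the elementary observation that two closed operators related by inclusion and sharing a common resolvent point must coincide. This reduces the lemma to two applications of a standard rigidity argument.

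First I would invoke Remark~\ref{remmilein} to record the inclusions
\begin{equation*}
\widetilde A_0\subset A_0^*\quad\text{and}\quad A_0\subset\widetilde A_0^*,
\end{equation*}
which follow from {\rm (G)} alone (densely-definedness of $A_0$ and $\widetilde A_0$, needed to form the adjoints, is implicit in the standing setting since their resolvent sets are nonempty). Then I would appeal to the standard duality for closed, densely defined operators, namely $(A_0^*-\myoverline\lambda_0)^{-1}=((A_0-\lambda_0)^{-1})^*$, to conclude $\myoverline\lambda_0\in\rho(A_0^*)$; the same duality applied to $\widetilde A_0$ yields $\lambda_0\in\rho(\widetilde A_0^*)$.

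The key step is the rigidity observation: if $A\subset B$ are closed operators in $\sH$ with $\mu\in\rho(A)\cap\rho(B)$, then $A=B$. Indeed, for $f\in\dom B$ the surjectivity of $A-\mu$ provides $g\in\dom A$ with $(A-\mu)g=(B-\mu)f$; since $A\subset B$ this gives $(B-\mu)(g-f)=0$, and the injectivity of $B-\mu$ forces $f=g\in\dom A$. Applied to $\widetilde A_0\subset A_0^*$ with the common resolvent point $\myoverline\lambda_0\in\rho(\widetilde A_0)\cap\rho(A_0^*)$, this yields $\widetilde A_0=A_0^*$; applied to $A_0\subset\widetilde A_0^*$ with $\lambda_0\in\rho(A_0)\cap\rho(\widetilde A_0^*)$, it yields $A_0=\widetilde A_0^*$, and hence {\rm (M)} holds.

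No step here is genuinely hard: once Remark~\ref{remmilein} is in hand, the remainder is essentially a one-line resolvent comparison. The only issue worth flagging is the implicit densely-definedness of $A_0$ and $\widetilde A_0$, which ensures the adjoints in the one-sided inclusions above are legitimately defined and which is part of the paper's convention when speaking of a nonempty resolvent set.
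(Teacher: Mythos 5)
Your argument is correct and essentially the paper's own: both rest on the inclusions from Remark~\ref{remmilein} together with the observation that an inclusion of closed operators with a common point in their resolvent sets forces equality. The only cosmetic difference is that the paper applies this rigidity once, to $A_0\subset\widetilde A_0^*$ at $\lambda_0$, and then obtains $A_0^*=\widetilde A_0$ by taking adjoints (using that $\widetilde A_0$ is closed), whereas you apply it separately to each of the two inclusions.
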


\begin{proof}
Since {\rm (G)} holds we have $A_0\subset\widetilde A_0^*$ by Remark~\ref{remmilein}. For $\lambda_0$ as in the assumptions one also has $\lambda_0\in\rho(\widetilde A_0^*)$, and hence it follows from
$A_0-\lambda_0\subset\widetilde A_0^*-\lambda_0$ that $A_0=\widetilde A_0^*$ and  $A_0^*=\widetilde A_0$ (since $\widetilde A_0$ is closed).
\end{proof}

 In the following we shall introduce and collect some properties of the so-called $\gamma$-fields $\gamma,\widetilde\gamma$ and Weyl functions $M,\widetilde M$
corresponding to the triple $\{\cG,(\Gamma_0,\Gamma_1),(\widetilde\Gamma_0,\widetilde\Gamma_1)\}$. Recall first the direct sum decompositions 
\begin{equation*}
\begin{split}
 \dom T=\dom A_0\,\dot +\,\ker(T-\lambda)=\ker \Gamma_0\,\dot +\,\ker(T-\lambda),\qquad\lambda\in\rho(A_0),\\
 \dom \widetilde T=\dom \widetilde A_0\,\dot +\,\ker(\widetilde T-\mu)=\ker \widetilde\Gamma_0\,\dot +\,\ker(\widetilde T-\mu),\qquad\mu\in\rho(\widetilde A_0).
 \end{split}
 \end{equation*}
 
 \begin{definition}
 Let  $\{\cG,(\Gamma_0,\Gamma_1),(\widetilde\Gamma_0,\widetilde\Gamma_1)\}$ be a triple as in Definition~\ref{qbt} and assume 
 that the resolvent sets of $A_0=T\upharpoonright\ker\Gamma_0$ and $\widetilde A_0=\widetilde T\upharpoonright\ker\widetilde\Gamma_0$ are nonempty. 
\begin{itemize}
 \item [{\rm (i)}] The $\gamma$-fields $\gamma$ and $\widetilde\gamma$ associated with $\{\cG,(\Gamma_0,\Gamma_1),(\widetilde\Gamma_0,\widetilde\Gamma_1)\}$ are defined by 
 \begin{equation*}
 \begin{split}
  \gamma(\lambda)&=\bigl(\Gamma_0\upharpoonright\ker(T-\lambda)\bigr)^{-1},\qquad \lambda\in\rho(A_0),\\
  \widetilde\gamma(\mu)&=\bigl(\widetilde\Gamma_0\upharpoonright\ker(\widetilde T-\mu)\bigr)^{-1},\qquad \mu\in\rho(\widetilde A_0).
 \end{split}
 \end{equation*}
 \item [{\rm (ii)}] The Weyl functions $M$ and $\widetilde M$ associated with $\{\cG,(\Gamma_0,\Gamma_1),(\widetilde\Gamma_0,\widetilde\Gamma_1)\}$ are defined by 
 \begin{equation*}
 \begin{split}
  M(\lambda)&=\Gamma_1\bigl(\Gamma_0\upharpoonright\ker(T-\lambda)\bigr)^{-1}=\Gamma_1\gamma(\lambda),\qquad \lambda\in\rho(A_0),\\
  \widetilde M(\mu)&=\widetilde\Gamma_1\bigl(\widetilde\Gamma_0\upharpoonright\ker(\widetilde T-\mu)\bigr)^{-1}=\widetilde\Gamma_1\widetilde\gamma(\mu),\qquad \mu\in\rho(\widetilde A_0).
 \end{split}
 \end{equation*}
\end{itemize}
 \end{definition}

Observe that for $f_\lambda\in\ker(T-\lambda)$, $\lambda\in\rho(A_0)$, and $g_\mu\in \ker(\widetilde T-\mu)$, $\mu\in\rho(\widetilde A_0)$, one has
\begin{equation}\label{weyl123}
 M(\lambda)\Gamma_0 f_\lambda =\Gamma_1 f_\lambda\quad\text{and}\quad
 \widetilde M(\mu)\widetilde\Gamma_0 g_\mu =\widetilde\Gamma_1 g_\mu.
\end{equation}

In the next proposition we collect some properties of the $\gamma$-fields.
 
\begin{proposition}\label{gamprop}
Assume that the triple
 $\{\cG,(\Gamma_0,\Gamma_1),(\widetilde\Gamma_0,\widetilde\Gamma_1)\}$ satisfies {\rm (G)}, {\rm (D)}, {\rm (M)}, and that the resolvent set of 
 $A_0$ or, equivalently, the resolvent set of $\widetilde A_0$ is nonempty. 
Let $\gamma$ and $\widetilde\gamma$ be the $\gamma$-fields  associated with $\{\cG,(\Gamma_0,\Gamma_1),(\widetilde\Gamma_0,\widetilde\Gamma_1)\}$.
Then the following assertions hold for all $\lambda\in\rho(A_0)$ and $\mu\in\rho(\widetilde A_0)$.
\begin{itemize}
 \item [{\rm (i)}] $\gamma(\lambda)$ and $\widetilde\gamma(\mu)$ are bounded operators from $\cG$ into $\sH$ with dense domains
 $\dom\gamma(\lambda)=\ran\Gamma_0$ and $\dom\widetilde\gamma(\mu)=\ran\widetilde\Gamma_0$, and $\ran\gamma(\lambda)=\ker(T-\lambda)$
 and $\ran\widetilde\gamma(\mu)=\ker(\widetilde T-\mu)$;
 \item [{\rm (ii)}] for $\varphi\in\ran\Gamma_0$ and $\psi\in\ran\widetilde\Gamma_0$ the functions $\lambda\mapsto\gamma(\lambda)\varphi$ and $\mu\mapsto\widetilde\gamma(\mu)\psi$ are holomorphic 
 on $\rho(A_0)$ and $\rho(\widetilde A_0)$, respectively, and the relations
 \begin{equation}\label{jadoch}
  \begin{split}
   \gamma(\lambda)&=\bigl(I+(\lambda-\nu)(A_0-\lambda)^{-1}\bigr)\gamma(\nu),\qquad \lambda,\nu\in\rho(A_0),\\
   \widetilde\gamma(\mu)&=\bigl(I+(\mu-\omega)(\widetilde A_0-\mu)^{-1}\bigr)\widetilde\gamma(\omega),\qquad \mu,\omega\in\rho(\widetilde A_0),
  \end{split}
\end{equation}
hold;
 \item [{\rm (iii)}] $\gamma(\lambda)^*$ and $\widetilde\gamma(\mu)^*$ are everywhere defined bounded operators from $\sH$ to $\cG$
 and for all $f,g\in\sH$ one has 
 \begin{equation*}
  \gamma(\lambda)^*f=\widetilde\Gamma_1(\widetilde A_0-\myoverline\lambda)^{-1}f\quad\text{and}\quad
  \widetilde\gamma(\mu)^*g=\Gamma_1(A_0-\myoverline\mu)^{-1}g,
 \end{equation*}
 in particular, $\ran\gamma(\lambda)^*\subset\ran\widetilde\Gamma_1$ and $\ran\widetilde\gamma(\mu)^*\subset\ran\Gamma_1$.
\end{itemize}
\end{proposition}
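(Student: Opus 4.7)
My plan is to establish the three assertions in the order (i)--(iii), deferring the boundedness part of (i) until after (iii), from which it will follow via a closed-graph argument; the results for $\widetilde\gamma$ are proved by entirely symmetric reasoning, so I only discuss $\gamma$ in detail. For the algebraic part of (i), fix $\lambda\in\rho(A_0)$ and exploit the direct sum decomposition $\dom T=\ker\Gamma_0\,\dot +\,\ker(T-\lambda)$ recalled just before the definition: the kernel of $\Gamma_0\upharpoonright\ker(T-\lambda)$ equals $\dom A_0\cap\ker(T-\lambda)=\ker(A_0-\lambda)=\{0\}$, and its image equals $\Gamma_0(\dom T)=\ran\Gamma_0$, which is dense by (D). Thus $\gamma(\lambda)$ is a densely defined operator from $\cG$ into $\sH$ with $\dom\gamma(\lambda)=\ran\Gamma_0$ and $\ran\gamma(\lambda)=\ker(T-\lambda)$, as claimed.

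To prove (iii), fix $\varphi\in\ran\Gamma_0$ and $h\in\sH$, set $f_\lambda:=\gamma(\lambda)\varphi$, and use the hypothesis on the resolvent sets together with the remark preceding Lemma~\ref{reslem} to obtain $\myoverline\lambda\in\rho(\widetilde A_0)$. This permits the choice $g:=(\widetilde A_0-\myoverline\lambda)^{-1}h\in\dom\widetilde A_0=\ker\widetilde\Gamma_0$; since $\widetilde A_0\subset\widetilde T$ by (M), Green's identity (G) applied to $f_\lambda$ and $g$ gives
\begin{equation*}
(f_\lambda,h)=(f_\lambda,(\widetilde T-\myoverline\lambda)g)=(Tf_\lambda,g)-\lambda(f_\lambda,g)+(\Gamma_0 f_\lambda,\widetilde\Gamma_1 g)-(\Gamma_1 f_\lambda,\widetilde\Gamma_0 g).
\end{equation*}
The last term vanishes because $\widetilde\Gamma_0 g=0$, while the first two cancel because $Tf_\lambda=\lambda f_\lambda$. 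Hence
\begin{equation*}
(\gamma(\lambda)\varphi,h)=\bigl(\varphi,\widetilde\Gamma_1(\widetilde A_0-\myoverline\lambda)^{-1}h\bigr),
\end{equation*}
which identifies $\widetilde\Gamma_1(\widetilde A_0-\myoverline\lambda)^{-1}$ with the adjoint $\gamma(\lambda)^*$ on the whole of $\sH$ and yields the inclusion $\ran\gamma(\lambda)^*\subset\ran\widetilde\Gamma_1$. Since $\gamma(\lambda)^*$ is automatically closed and everywhere defined, the closed graph theorem forces it to be bounded; dualising, $\gamma(\lambda)$ is bounded too, which completes (i).

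For (ii), fix $\lambda,\nu\in\rho(A_0)$ and $\varphi\in\ran\Gamma_0$. Since $\Gamma_0(\gamma(\lambda)\varphi-\gamma(\nu)\varphi)=0$, the difference lies in $\ker\Gamma_0=\dom A_0$, and using $(T-\lambda)\gamma(\lambda)\varphi=0$ one obtains
\begin{equation*}
(A_0-\lambda)\bigl(\gamma(\lambda)\varphi-\gamma(\nu)\varphi\bigr)=-(T-\lambda)\gamma(\nu)\varphi=(\lambda-\nu)\gamma(\nu)\varphi,
\end{equation*}
which rearranges to the first identity in (ii) after multiplying by $(A_0-\lambda)^{-1}$; holomorphy of $\lambda\mapsto\gamma(\lambda)\varphi$ on $\rho(A_0)$ is then inherited from the holomorphy of the resolvent.

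The conceptually decisive step, and the one I expect to be the main subtlety, is the choice of test element $g=(\widetilde A_0-\myoverline\lambda)^{-1}h$ in Green's identity: it crucially uses (M) to guarantee both $g\in\dom\widetilde T$ with $\widetilde A_0 g=\widetilde T g$ and $\widetilde\Gamma_0 g=0$, so that the two a~priori uncontrollable boundary terms in (G) collapse to the single pairing $(\varphi,\widetilde\Gamma_1 g)$. Without this substitution, the individual mappings $\Gamma_0,\Gamma_1,\widetilde\Gamma_1$ need not be bounded and the clean adjoint formula on which the boundedness in (i) depends would be inaccessible.
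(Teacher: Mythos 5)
Your proof is correct, and for parts (i) and (iii) it follows essentially the paper's own route: the adjoint formula $\gamma(\lambda)^*=\widetilde\Gamma_1(\widetilde A_0-\myoverline\lambda)^{-1}$ is obtained exactly as in the paper by testing Green's identity (G) against $g=(\widetilde A_0-\myoverline\lambda)^{-1}h\in\ker\widetilde\Gamma_0$ (using (M) for $\myoverline\lambda\in\rho(\widetilde A_0)$ and $\widetilde A_0\subset\widetilde T$, and (D) for the density of $\ran\Gamma_0$ so that the adjoint is single-valued), and boundedness of $\gamma(\lambda)$ is then deduced from the everywhere defined closed adjoint, just as the paper does via $\gamma(\lambda)^{**}=\myoverline{\gamma(\lambda)}$. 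Where you genuinely diverge is (ii): the paper derives \eqref{jadoch} from (iii), writing $\gamma(\lambda)^*-\gamma(\nu)^*=(\myoverline\lambda-\myoverline\nu)\gamma(\nu)^*(\widetilde A_0-\myoverline\lambda)^{-1}$ by the resolvent identity and then taking adjoints (which first yields the relation for the closures $\myoverline{\gamma(\lambda)}$, restricted afterwards to $\ran\Gamma_0$), whereas you argue directly: $\gamma(\lambda)\varphi-\gamma(\nu)\varphi\in\ker\Gamma_0=\dom A_0$ and $(A_0-\lambda)\bigl(\gamma(\lambda)\varphi-\gamma(\nu)\varphi\bigr)=(\lambda-\nu)\gamma(\nu)\varphi$, so applying $(A_0-\lambda)^{-1}$ gives \eqref{jadoch} pointwise on $\ran\Gamma_0$. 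Your argument is more elementary, does not pass through the adjoints or closures at all (so (ii) becomes independent of (iii)), and immediately produces the identity on the common domain $\ran\Gamma_0$; the paper's route has the side benefit of exhibiting the same relation for the bounded closures $\myoverline{\gamma(\lambda)}$, which is convenient later. Both are complete proofs of the stated assertions.
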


\begin{proof}
(iii) Let $\lambda\in\rho(A_0)$, $\varphi\in\dom\gamma(\lambda)=\ran\Gamma_0$, and $f\in\sH$. Making use of the condition (M) we obtain $\myoverline\lambda\in\rho(\widetilde A_0)$ and hence there exists 
$g\in\dom \widetilde A_0=\ker\widetilde\Gamma_0$ such that $(\widetilde A_0-\myoverline\lambda)g =f$. By the definition of the $\gamma$-field we have 
$\Gamma_0\gamma(\lambda)\varphi=\varphi$ and now it follows from $\widetilde A_0\subset\widetilde T$ and the abstract Green's identity (G) that
 \begin{equation*}
  \begin{split}
   (\gamma(\lambda)\varphi,f)&=\bigl(\gamma(\lambda)\varphi,(\widetilde A_0-\myoverline\lambda)g\bigr)\\
   &=-\bigl((\lambda\gamma(\lambda)\varphi,g)-(\gamma(\lambda)\varphi,\widetilde A_0 g)\bigr)\\
   &=-\bigl(( T\gamma(\lambda)\varphi,g)-(\gamma(\lambda)\varphi,\widetilde T g)\bigr)\\
   &=-\bigl((\Gamma_1 \gamma(\lambda)\varphi,\widetilde\Gamma_0 g)-(\Gamma_0\gamma(\lambda)\varphi,\widetilde\Gamma_1 g)\bigr)\\
   &=\bigl(\varphi,\widetilde\Gamma_1 (\widetilde A_0-\myoverline\lambda)^{-1}f\bigr).
  \end{split}
 \end{equation*}
Since this identity holds for all $f\in\sH$ and $\varphi\in\ran\Gamma_0$ (the latter is a dense subspace of $\cG$ as we assume (D)) we conclude $\dom\gamma(\lambda)^*=\sH$ and $\gamma(\lambda)^*f=\widetilde\Gamma_1(\widetilde A_0-\myoverline\lambda)^{-1}f$ is valid for all $f\in\sH$. 
From the fact that the adjoint operator is automatically closed it follows that $\gamma(\lambda)^*$
is bounded. A similar computation leads to the identity $\widetilde\gamma(\mu)^*g=\Gamma_1(A_0-\myoverline\mu)^{-1}g$ for all $g\in\sH$ and implies that
$\widetilde\gamma(\mu)^*$ is also bounded and everywhere defined on $\sH$.

\noindent (i) It follows from (iii) that $\gamma(\lambda)^{**}=\myoverline{\gamma(\lambda)}$ and 
$\widetilde\gamma(\mu)^{**}=\myoverline{\widetilde\gamma(\mu)}$ are everywhere defined and bounded operators from $\cG$ to $\sH$ 
and hence also the operators $\gamma(\lambda)$ and $\widetilde\gamma(\mu)$ are bounded. The remaining assertions in (i) are immediate from
the definition of the $\gamma$-fields.

\noindent (ii) For $\lambda,\nu\in\rho(A_0)$ we use (iii) and compute 
\begin{equation*}
\begin{split}
 \gamma(\lambda)^*-\gamma(\nu)^*&=\widetilde\Gamma_1\bigl((\widetilde A_0-\myoverline\lambda)^{-1}-(\widetilde A_0-\myoverline\nu)^{-1}\bigr)\\
 &=(\myoverline\lambda-\myoverline\nu)\widetilde\Gamma_1(\widetilde A_0-\myoverline\nu)^{-1}(\widetilde A_0-\myoverline\lambda)^{-1}\\
 &=(\myoverline\lambda-\myoverline\nu)\gamma(\nu)^*(\widetilde A_0-\myoverline\lambda)^{-1}.
\end{split}
\end{equation*}
Taking adjoints and using (M) leads to
\begin{equation*}
 \myoverline{\gamma(\lambda)}-\myoverline{\gamma(\nu)}=(\lambda-\nu)(A_0-\lambda)^{-1}\myoverline{\gamma(\nu)}
\end{equation*}
and this implies the first identity in \eqref{jadoch}. The second identity in \eqref{jadoch} can be proved in the same way.
\end{proof}

Now we turn the properties of the Weyl functions.

\begin{proposition}\label{mprop}
Assume that the triple
 $\{\cG,(\Gamma_0,\Gamma_1),(\widetilde\Gamma_0,\widetilde\Gamma_1)\}$ satisfies {\rm (G)}, {\rm (D)}, {\rm (M)}, and that the resolvent set of 
 $A_0$ or, equivalently, the resolvent set of $\widetilde A_0$ is nonempty. 
Let $\gamma,\widetilde\gamma$ and $M,\widetilde M$ be the $\gamma$-fields and Weyl functions, respectively, associated with $\{\cG,(\Gamma_0,\Gamma_1),(\widetilde\Gamma_0,\widetilde\Gamma_1)\}$.
Then the following assertions hold for all $\lambda\in\rho(A_0)$ and $\mu\in\rho(\widetilde A_0)$:
\begin{itemize}
 \item [{\rm (i)}] $M(\lambda)$ and $\widetilde M(\mu)$ are operators in $\cG$ with dense domains
 $\dom M(\lambda)=\ran\Gamma_0$ and $\dom\widetilde M(\mu)=\ran\widetilde\Gamma_0$, and $\ran M(\lambda)\subset\ran \Gamma_1$
 and $\ran\widetilde M(\mu)\subset\ran\widetilde\Gamma_1$;
\item [{\rm (ii)}] $M(\lambda)\subset\widetilde M(\myoverline\lambda)^*$ and $\widetilde M(\myoverline\lambda)\subset M(\lambda)^*$ and one has the identities
\begin{equation}\label{yes3}
\begin{split}
 M(\lambda)-\widetilde M(\mu)^*&=(\lambda-\myoverline\mu)\widetilde\gamma(\mu)^*\gamma(\lambda),\\
 M(\lambda)^*-\widetilde M(\mu)&=(\myoverline\lambda-\mu)\gamma(\lambda)^*\widetilde\gamma(\mu);
 \end{split}
\end{equation}
\item [{\rm (iii)}]
The functions $\lambda\mapsto M(\lambda)$ and $\mu\mapsto\widetilde M(\mu)$ are holomorphic in the sense that they can be
written as the sum of the possibly unbounded closed operators $\widetilde M(\lambda_0)^*$ and $M(\mu_0)^*$, respectively, 
where $\lambda_0,\mu_0\in\rho(A_0)\cap\rho(\widetilde A_0)$ are fixed, and a bounded holomorphic operator
function:
\begin{equation*}
\begin{split} 
 M(\lambda)&=\widetilde M(\lambda_0)^*+\widetilde\gamma(\lambda_0)^*(\lambda-\myoverline\lambda_0)\bigl(I+(\lambda-\lambda_0)(A_0-\lambda)^{-1}\bigr)\gamma(\lambda_0),\\
 \widetilde M(\mu)&=M(\mu_0)^*+\gamma(\mu_0)^*(\mu-\myoverline\mu_0)\bigl(I+(\mu-\mu_0)(\widetilde A_0-\mu)^{-1}\bigr)\widetilde\gamma(\mu_0).
\end{split} 
\end{equation*}
\end{itemize}
\end{proposition}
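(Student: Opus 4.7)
The plan is to derive (i) directly from the definitions, to obtain both the adjoint inclusions and the Krein-type resolvent identities in (ii) from a single application of Green's identity (G) to vectors in the defect subspaces, and to deduce (iii) as an immediate consequence of (ii) together with the $\gamma$-field identity \eqref{jadoch} already established in Proposition~\ref{gamprop}.

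For part (i), by the definition $M(\lambda)=\Gamma_1\gamma(\lambda)$ and the description of $\dom\gamma(\lambda)$ and $\ran\gamma(\lambda)$ from Proposition~\ref{gamprop}(i), I would simply read off that $\dom M(\lambda)=\ran\Gamma_0$, $\ran M(\lambda)\subset\ran\Gamma_1$, and the density of $\dom M(\lambda)$ in $\cG$ by (D); the corresponding statements for $\widetilde M(\mu)$ are identical.

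For part (ii), the central computation is the following. Fix $\varphi\in\ran\Gamma_0=\dom M(\lambda)$ and $\psi\in\ran\widetilde\Gamma_0=\dom\widetilde M(\mu)$, and set $f=\gamma(\lambda)\varphi\in\ker(T-\lambda)$, $g=\widetilde\gamma(\mu)\psi\in\ker(\widetilde T-\mu)$. Applying (G) and using $\Gamma_0 f=\varphi$, $\Gamma_1 f=M(\lambda)\varphi$, $\widetilde\Gamma_0 g=\psi$, $\widetilde\Gamma_1 g=\widetilde M(\mu)\psi$ gives
\begin{equation*}
(\lambda-\myoverline\mu)\bigl(\gamma(\lambda)\varphi,\widetilde\gamma(\mu)\psi\bigr)=\bigl(M(\lambda)\varphi,\psi\bigr)-\bigl(\varphi,\widetilde M(\mu)\psi\bigr).
\end{equation*}
Rewriting the inner product on the left via $\widetilde\gamma(\mu)^*$ (everywhere defined and bounded by Proposition~\ref{gamprop}(iii)), one reads off that the functional $\psi\mapsto(\varphi,\widetilde M(\mu)\psi)$ extends continuously to all of $\cG$, so $\varphi\in\dom\widetilde M(\mu)^*$ and
\begin{equation*}
 \widetilde M(\mu)^*\varphi=M(\lambda)\varphi-(\lambda-\myoverline\mu)\widetilde\gamma(\mu)^*\gamma(\lambda)\varphi,
\end{equation*}
which is the first identity in \eqref{yes3}. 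Taking complex conjugates of this scalar identity (equivalently, running the same argument with the roles of the two halves of the triple interchanged) yields the second identity and the inclusion $\widetilde M(\mu)\subset M(\lambda)^*$. Finally, setting $\mu=\myoverline\lambda$ in the first identity makes the right-hand side vanish and gives $M(\lambda)\subset\widetilde M(\myoverline\lambda)^*$; similarly the choice $\lambda=\myoverline\mu$ in the second identity gives $\widetilde M(\myoverline\lambda)\subset M(\lambda)^*$.

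For part (iii), I would simply insert the $\gamma$-field identity \eqref{jadoch} of Proposition~\ref{gamprop}(ii), namely $\gamma(\lambda)=\bigl(I+(\lambda-\lambda_0)(A_0-\lambda)^{-1}\bigr)\gamma(\lambda_0)$, into the first formula of \eqref{yes3} evaluated at the fixed point $\mu=\lambda_0$, and pull the scalar $(\lambda-\myoverline{\lambda_0})$ inside. This directly produces the claimed representation of $M(\lambda)$; the analogous substitution with the second identity and $\mu=\mu_0$ gives the representation of $\widetilde M(\mu)$. Since $\gamma(\lambda_0)$, $\widetilde\gamma(\lambda_0)^*$ (resp.\ $\widetilde\gamma(\mu_0)$, $\gamma(\mu_0)^*$) are everywhere defined bounded operators and $\lambda\mapsto(A_0-\lambda)^{-1}$ is bounded-holomorphic on $\rho(A_0)$, the correction term is a bounded holomorphic operator function, while the unbounded part $\widetilde M(\lambda_0)^*$ (resp.\ $M(\mu_0)^*$) is $\lambda$-independent. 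The only delicate point in the whole proof is the bookkeeping with the (possibly unbounded) adjoints $\widetilde M(\mu)^*$ and $M(\lambda)^*$ in part (ii): one has to be careful to conclude membership in the adjoint's domain from the inner-product identity rather than assume it, which is precisely what the density conditions on $\ran\Gamma_0$ and $\ran\widetilde\Gamma_0$ provided by (D) make possible.
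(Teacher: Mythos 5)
Your proposal is correct and follows essentially the same route as the paper: the central computation applying (G) to $f=\gamma(\lambda)\varphi$ and $g=\widetilde\gamma(\mu)\psi$ is exactly the paper's identity \eqref{doitplease}, from which the adjoint inclusions (via $\mu=\myoverline\lambda$) and the formulas \eqref{yes3} are read off, while (i) follows from the definitions and (iii) from combining this with Proposition~\ref{gamprop}~(ii). Your extra care in deducing membership in $\dom\widetilde M(\mu)^*$ from the inner-product identity (using that $\widetilde\gamma(\mu)^*$ is bounded and everywhere defined) is precisely the step the paper leaves implicit, so no changes are needed.
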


\begin{proof}
(i) follows immediately from the definition of the Weyl functions $M$ and $\widetilde M$.
\\
\noindent 
(ii)
Let $\varphi_\lambda\in\ran\Gamma_0$ and $\psi_\mu\in\ran\widetilde\Gamma_0$ and pick $f_\lambda\in\ker(T-\lambda)$ and 
$g_\mu\in\ker(\widetilde T-\mu)$ such that $\Gamma_0 f_\lambda=\varphi_\lambda$ and $\widetilde\Gamma_0 g_\mu=\psi_\mu$.
Then we have $f_\lambda=\gamma(\lambda)\varphi_\lambda$ and $g_\mu=\widetilde\gamma(\mu)\psi_\mu$ and a straightforward 
computation leads to
\begin{equation}\label{doitplease}
 \begin{split}
(M(\lambda)\varphi_\lambda,\psi_\mu)-(\varphi_\lambda,\widetilde M(\mu)\psi_\mu)
&= (M(\lambda)\Gamma_0f_\lambda,\widetilde \Gamma_0 g_\mu)-(\Gamma_0f_\lambda,\widetilde M(\mu)\widetilde\Gamma_0 g_\mu)\\
&= (\Gamma_1f_\lambda,\widetilde \Gamma_0 g_\mu)-(\Gamma_0f_\lambda,\widetilde\Gamma_1 g_\mu)\\
&= (Tf_\lambda,g_\mu)-(f_\lambda,\widetilde Tg_\mu)\\
&= (\lambda f_\lambda,g_\mu) - (f_\lambda,\mu g_\mu)\\
&= \bigl((\lambda-\myoverline\mu)\gamma(\lambda)\varphi_\lambda,\widetilde\gamma(\mu)\psi_\mu\bigr).
 \end{split}
\end{equation}
For $\mu=\myoverline\lambda$ this leads to $(M(\lambda)\varphi_\lambda,\psi_{\myoverline\lambda})
=(\varphi_\lambda,\widetilde M(\myoverline\lambda)\psi_{\myoverline\lambda})$
and hence $\widetilde M(\myoverline\lambda)\subset M(\lambda)^*$ and $M(\lambda)\subset \widetilde M(\myoverline\lambda)^*$.
Furthermore, \eqref{doitplease} implies the identities \eqref{yes3}.
\\
\noindent
(iii) This is an immediate consequence of the formulas \eqref{doitplease} and Proposition~\ref{gamprop}~(ii).
\end{proof}

\section{Abstract boundary value problems}\label{abssec}

Next we introduce two families of operators in $\sH$ as restrictions of $T$ and $\widetilde T$ via abstract boundary conditions in $\cG$. 
 Let $\{S,\widetilde S\}$ be an adjoint pair of operators in $\sH$ and let 
 $\{\cG,(\Gamma_0,\Gamma_1),(\widetilde\Gamma_0,\widetilde\Gamma_1)\}$ be a triple for the adjoint pair $\{S,\widetilde S\}$ with linear mappings $\Gamma_0,\Gamma_1:\dom T\rightarrow \cG$ 
 and $\widetilde\Gamma_0,\widetilde\Gamma_1:\dom \widetilde T\rightarrow \cG$ 
 as in Definition~\ref{qbt}, where $T$ and $\widetilde T$ are cores of $S^*$ and $\widetilde S^*$, respectively.
 For linear operators $B$ and $\widetilde B$ in $\cG$ we define 
 \begin{equation}\label{abs}
 \begin{split}
  A_B f&= Tf,\qquad \dom A_B=\bigl\{f\in\dom T: B\Gamma_1 f = \Gamma_0 f\bigr\},\\
  \widetilde A_{\widetilde B} g&= \widetilde Tg,\qquad \dom \widetilde A_{\widetilde B}=\bigl\{g\in\dom \widetilde T: \widetilde B\widetilde\Gamma_1 g 
  = \widetilde \Gamma_0 g\bigr\}.
 \end{split}
 \end{equation}
Note that the operator $B$ may only be defined on a subspace of $\cG$ and hence $f\in\dom A_B$ means that $\Gamma_1 f\in\dom B$ and $B\Gamma_1 f=\Gamma_0 f$;
the boundary condition $\widetilde B\widetilde\Gamma_1 g 
  = \widetilde \Gamma_0 g$ is understood in the same way. The goal of this section is to derive conditions on the parameters $B$ or $\widetilde B$ and 
  the mapping properties of $\Gamma_0,\Gamma_1,\widetilde\Gamma_0,\widetilde\Gamma_1$ or the corresponding Weyl functions such that $A_B$ or $\widetilde A_{\widetilde B}$ have nonempty
  resolvent sets, that is, for $h,k\in\sH$ and $\lambda\in\rho(A_B)$ or $\mu\in\rho(\widetilde A_{\widetilde B})$ the boundary value problem
  \begin{equation*}
   (T-\lambda)f=h,\quad B\Gamma_1 f = \Gamma_0 f,\quad\text{or}\quad (\widetilde T-\mu)g=k,\quad \widetilde B\widetilde\Gamma_1 g = \widetilde\Gamma_0 g,
  \end{equation*}
admits a unique solution  $f=(A_B-\lambda)^{-1}h$ or $g=(\widetilde A_{\widetilde B}-\mu)^{-1}k$, which will be expressed in 
a resolvent formula involving the resolvent of $A_0,\widetilde A_0$ and a perturbation term consiting of the $\gamma$-fields, Weyl functions and parameters $B,\widetilde B$.
  
 We start with a simple preparatory lemma that only makes use of Green's identity  {\rm (G)}.

\begin{lemma}\label{bbblem}
Assume that the triple $\{\cG,(\Gamma_0,\Gamma_1),(\widetilde\Gamma_0,\widetilde\Gamma_1)\}$ for the adjoint pair $\{S,\widetilde S\}$ satisfies {\rm (G)}.
Let $B,B'$ be linear operators in $\cG$ and assume that
\begin{equation}\label{assub12}
 (B\varphi,\psi)=(\varphi, B'\psi)
\end{equation}
holds for all $\varphi\in\dom B$ and $\psi\in\dom B'$. Then the operators $A_B$ and $\widetilde A_{B'}$ in \eqref{abs} satisfy
\begin{equation}\label{abinc22}
 A_B\subset (\widetilde A_{B'})^* \quad\text{and}\quad \widetilde A_{B'}\subset (A_B)^*.
\end{equation}
In particular, if $B$ is densely defined then 
\begin{equation*}
 A_B\subset (\widetilde A_{B^*})^* \quad\text{and}\quad \widetilde A_{B^*}\subset (A_B)^*.
\end{equation*}
\end{lemma}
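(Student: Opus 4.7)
The proof will be a direct computation using Green's identity together with the boundary conditions defining $A_B$ and $\widetilde A_{B'}$. The plan is as follows.

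Fix $f\in\dom A_B$ and $g\in\dom\widetilde A_{B'}$. By the definition of these operators in \eqref{abs} we have $f\in\dom T$, $\Gamma_1 f\in\dom B$, $B\Gamma_1 f=\Gamma_0 f$, and analogously $g\in\dom\widetilde T$, $\widetilde\Gamma_1 g\in\dom B'$, $B'\widetilde\Gamma_1 g=\widetilde\Gamma_0 g$. Plugging these boundary conditions into the right-hand side of Green's identity (G) and invoking the assumption \eqref{assub12} (with $\varphi=\Gamma_1 f\in\dom B$ and $\psi=\widetilde\Gamma_1 g\in\dom B'$) gives
\begin{equation*}
\begin{split}
 (Tf,g)-(f,\widetilde Tg)&=(\Gamma_1 f,\widetilde\Gamma_0 g)-(\Gamma_0 f,\widetilde\Gamma_1 g)\\
 &=(\Gamma_1 f,B'\widetilde\Gamma_1 g)-(B\Gamma_1 f,\widetilde\Gamma_1 g)=0.
\end{split}
\end{equation*}

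From this identity both inclusions in \eqref{abinc22} follow directly. Reading it as $(A_Bf,g)=(f,\widetilde A_{B'}g)$ for all $g\in\dom\widetilde A_{B'}$, one concludes that $f\in\dom(\widetilde A_{B'})^*$ with $(\widetilde A_{B'})^*f=A_Bf$, i.e., $A_B\subset(\widetilde A_{B'})^*$. Reading the same identity with the roles of $f$ and $g$ swapped yields $g\in\dom(A_B)^*$ with $(A_B)^*g=\widetilde A_{B'}g$, hence $\widetilde A_{B'}\subset(A_B)^*$.

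The ``in particular'' part is immediate: if $B$ is densely defined then $B^*$ exists in the usual sense and the pair $(B,B^*)$ satisfies \eqref{assub12} by the very definition of the Hilbert space adjoint. Applying what has just been proved with $B'=B^*$ yields $A_B\subset(\widetilde A_{B^*})^*$ and $\widetilde A_{B^*}\subset(A_B)^*$. There is no genuine obstacle in the argument; the only point to watch is that one must carefully keep track of the domains of $B$ and $B'$ so that the substitution into Green's identity is legitimate, which is ensured precisely by the requirement $\Gamma_1 f\in\dom B$ and $\widetilde\Gamma_1 g\in\dom B'$ built into the definition of $A_B$ and $\widetilde A_{B'}$.
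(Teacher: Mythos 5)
Your proof is correct and follows essentially the same route as the paper: substitute the boundary conditions $B\Gamma_1 f=\Gamma_0 f$ and $B'\widetilde\Gamma_1 g=\widetilde\Gamma_0 g$ into Green's identity (G) and use \eqref{assub12} to conclude $(A_Bf,g)=(f,\widetilde A_{B'}g)$, which gives both inclusions, and the densely defined case follows by taking $B'=B^*$. Nothing is missing.
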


\begin{proof}
For $f\in\dom A_B\subset\dom T$ and $g\in\dom \widetilde A_{B'}\subset\dom\widetilde T$ it follows from Green's identity (G) that
 \begin{equation*}
  \begin{split}
   (A_Bf,g)-(f,\widetilde A_{B'}g)&=(Tf,g)-(f,\widetilde Tg)\\
   &=(\Gamma_1 f,\widetilde\Gamma_0 g)-(\Gamma_0 f,\widetilde\Gamma_1 g)\\
   &=(\Gamma_1 f, B'\widetilde\Gamma_1 g)-(B\Gamma_1 f,\widetilde\Gamma_1 g)\\
   &=0,
  \end{split}
 \end{equation*}
 where \eqref{assub12} was used in the last step.
This implies both inclusions in \eqref{abinc22}.
\end{proof}

In the next theorem we provide an abstract Birman-Schwinger principle in a symmetrized form for operators of the type
\begin{equation}\label{abs1212}
 \begin{split}
  A_{B_1B_2} f&= Tf,\qquad \dom A_{B_1B_2}=\bigl\{f\in\dom T: B_1B_2\Gamma_1 f = \Gamma_0 f\bigr\},\\
  \widetilde A_{\widetilde B_1\widetilde B_2} g&= \widetilde Tg,\qquad \dom \widetilde A_{\widetilde B_1\widetilde B_2}=\bigl\{g\in\dom \widetilde T: \widetilde B_1\widetilde B_2\widetilde\Gamma_1 g 
  = \widetilde \Gamma_0 g\bigr\},
 \end{split}
 \end{equation}
where $ B_1B_2$ and 
$\widetilde B_1 \widetilde B_2$ are (products of) linear operators  in $\cG$; cf. \eqref{abs}. The special case $B_2=B$, $B_1=I$, or $\widetilde B_2=\widetilde B$, $\widetilde B_1=I$,
in which \eqref{abs1212} reduces to \eqref{abs} will be mentioned separately.

\begin{theorem}\label{bsthm}
Consider a triple
 $\{\cG,(\Gamma_0,\Gamma_1),(\widetilde\Gamma_0,\widetilde\Gamma_1)\}$ as in Definition~\ref{qbt}, assume that 
 the resolvent sets of 
 $A_0$ and $\widetilde A_0$ are nonempty, and let  $M$ and $\widetilde M$ be the associated Weyl functions.
Then the following assertions hold for the operators $A_{B_1 B_2}$ and $\widetilde A_{\widetilde B_1\widetilde B_2}$ in \eqref{abs1212}, and all $\lambda\in\rho(A_0)$
and $\mu\in\rho(\widetilde A_0)$:
\begin{itemize}
 \item [{\rm (i)}] $\lambda\in\sigma_p(A_{B_1 B_2})$ if and only if $\ker(I-B_2M(\lambda)B_1)\not=\{0\}$, and in this case 
 \begin{equation}\label{kergam23}
  \ker(A_{B_1 B_2}-\lambda)=\bigl\{f_\lambda\in\ker(T-\lambda):\Gamma_0 f_\lambda= B_1\varphi,
  \varphi\in \ker(I-B_2M(\lambda)B_1)\bigr\}.
 \end{equation}
 \item [{\rm (ii)}] $\mu\in\sigma_p(\widetilde A_{\widetilde B_1\widetilde B_2})$ if and only if 
 $\ker(I-\widetilde B_2\widetilde M(\mu)\widetilde B_1)\not=\{0\}$, and in this case 
 \begin{equation*}
  \ker(\widetilde A_{\widetilde B_1\widetilde B_2}-\mu)
  =\bigl\{g_\mu\in\ker(\widetilde T-\mu):\widetilde \Gamma_0 g_\mu= \widetilde B_1\psi, 
  \psi\in \ker(I-\widetilde B_2\widetilde M(\mu)\widetilde B_1)\bigr\}.
 \end{equation*}
\end{itemize}
\end{theorem}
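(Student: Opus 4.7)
The plan is to prove part (i) by translating the eigenvalue equation for $A_{B_1B_2}$ into an equation on the boundary space via the $\gamma$-field and the Weyl function, and then part (ii) follows by the completely symmetric argument with $(\widetilde{T},\widetilde\Gamma_0,\widetilde\Gamma_1,\widetilde\gamma,\widetilde M,\widetilde B_i)$ in place of $(T,\Gamma_0,\Gamma_1,\gamma, M, B_i)$.

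The key tools will be the direct sum decomposition $\dom T = \dom A_0 \,\dot+\, \ker(T-\lambda)$ for $\lambda\in\rho(A_0)$, together with the defining identities $\Gamma_0\gamma(\lambda)=I_{\ran\Gamma_0}$ and $\Gamma_1\gamma(\lambda)=M(\lambda)$ from Proposition~\ref{gamprop} and \eqref{weyl123}. First, I will pick $f\in\ker(A_{B_1B_2}-\lambda)$. Since $f\in\dom T$ satisfies $(T-\lambda)f=0$, and since $\dom A_0\cap\ker(T-\lambda)=\ker(A_0-\lambda)=\{0\}$ because $\lambda\in\rho(A_0)$, we have $f\in\ker(T-\lambda)$; in particular $f=\gamma(\lambda)\Gamma_0 f$. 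Setting $\varphi := B_2\Gamma_1 f$, the boundary condition $B_1B_2\Gamma_1 f=\Gamma_0 f$ becomes $\Gamma_0 f=B_1\varphi$, and then $\Gamma_1 f=M(\lambda)\Gamma_0 f=M(\lambda)B_1\varphi$ yields
\begin{equation*}
\varphi = B_2\Gamma_1 f = B_2 M(\lambda)B_1\varphi,\qquad\text{i.e.}\qquad (I-B_2 M(\lambda)B_1)\varphi=0.
\end{equation*}
Moreover $\varphi\neq 0$, because $\varphi=0$ would force $\Gamma_0 f=B_1\varphi=0$, hence $f\in\ker(A_0-\lambda)=\{0\}$, contradicting $f\neq 0$.

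For the converse, I will start from a nontrivial $\varphi\in\ker(I-B_2M(\lambda)B_1)$ (which implicitly requires $\varphi\in\dom B_1$, $B_1\varphi\in\dom M(\lambda)=\ran\Gamma_0$, and $M(\lambda)B_1\varphi\in\dom B_2$) and define
\begin{equation*}
 f_\lambda := \gamma(\lambda)B_1\varphi \in \ker(T-\lambda).
\end{equation*}
Using $\Gamma_0 f_\lambda = B_1\varphi$ and $\Gamma_1 f_\lambda = M(\lambda)B_1\varphi$, the identity $B_2M(\lambda)B_1\varphi=\varphi$ gives $B_1B_2\Gamma_1 f_\lambda=B_1\varphi=\Gamma_0 f_\lambda$, so $f_\lambda\in\dom A_{B_1B_2}$ and $(A_{B_1B_2}-\lambda)f_\lambda=0$. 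Nontriviality is again forced by the fixed-point equation: $B_1\varphi=0$ would entail $\varphi=B_2M(\lambda)B_1\varphi=0$. The reverse inclusion in \eqref{kergam23} is also built into this construction, while the forward inclusion was established in the first paragraph of the argument by taking $\varphi=B_2\Gamma_1 f$.

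I expect the only mildly delicate point to be keeping the domain hypotheses consistent: at each application of $B_1$, $M(\lambda)$ or $B_2$ one has to verify that the element acted upon lies in the relevant domain. Apart from this bookkeeping the argument is a direct computation using the $\gamma$-field/Weyl function formalism, and part (ii) is handled verbatim by replacing $T,\Gamma_0,\Gamma_1,\gamma,M,B_i$ by their tilded counterparts and using $\mu\in\rho(\widetilde A_0)$, so no new ideas are needed.
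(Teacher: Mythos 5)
Your proposal is correct and follows essentially the same route as the paper: translate the eigenvalue equation into the fixed-point equation $(I-B_2M(\lambda)B_1)\varphi=0$ with $\varphi=B_2\Gamma_1 f$, using $\ker(A_0-\lambda)=\{0\}$ for nontriviality, and conversely recover the eigenfunction as the element of $\ker(T-\lambda)$ with $\Gamma_0 f_\lambda=B_1\varphi$, with the same domain bookkeeping. The only cosmetic slip is the sentence deducing ``$f\in\ker(T-\lambda)$'' from the trivial intersection (that membership is immediate; the intersection argument is what gives $\Gamma_0 f\neq 0$), but you use it correctly later, so nothing is missing.
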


In the case $B_2=B$, $B_1=I$, or $\widetilde B_2=\widetilde B$, $\widetilde B_1=I$, we have the following corollary.

\begin{corollary}\label{bscor}
Consider a triple
 $\{\cG,(\Gamma_0,\Gamma_1),(\widetilde\Gamma_0,\widetilde\Gamma_1)\}$ as in Definition~\ref{qbt}, assume that 
 the resolvent sets of 
 $A_0$ and $\widetilde A_0$ are nonempty, and let  $M$ and $\widetilde M$ be the associated Weyl functions.
Then the following assertions hold for the operators $A_B$ and $\widetilde A_{\widetilde B}$ in \eqref{abs}, and all $\lambda\in\rho(A_0)$
and $\mu\in\rho(\widetilde A_0)$:
\begin{itemize}
 \item [{\rm (i)}] $\lambda\in\sigma_p(A_{B})$ if and only if $\ker(I-BM(\lambda))\not=\{0\}$, and in this case 
 \begin{equation*}
  \ker(A_B-\lambda)=\gamma(\lambda)\ker(I-BM(\lambda)).
 \end{equation*}
 \item [{\rm (ii)}] $\mu\in\sigma_p(\widetilde A_{\widetilde B})$ if and only if 
 $\ker(I-\widetilde B\widetilde M(\mu))\not=\{0\}$, and in this case 
 \begin{equation*}
  \ker(\widetilde A_{\widetilde B}-\mu)=\widetilde\gamma(\mu)\ker(I-\widetilde B\widetilde M(\mu)).
 \end{equation*}
\end{itemize}
\end{corollary}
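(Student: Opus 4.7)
The plan is to deduce Corollary~\ref{bscor} directly from Theorem~\ref{bsthm} by specializing the boundary parameters. For part (i) I would apply the theorem with $B_1=I$ and $B_2=B$; for part (ii) with $\widetilde B_1=I$ and $\widetilde B_2=\widetilde B$. Under these choices the operators $A_{B_1B_2}$ and $\widetilde A_{\widetilde B_1\widetilde B_2}$ in \eqref{abs1212} coincide with $A_B$ and $\widetilde A_{\widetilde B}$ in \eqref{abs}, respectively, and the conditions $\ker(I-B_2M(\lambda)B_1)\ne\{0\}$ and $\ker(I-\widetilde B_2\widetilde M(\mu)\widetilde B_1)\ne\{0\}$ collapse exactly to $\ker(I-BM(\lambda))\ne\{0\}$ and $\ker(I-\widetilde B\widetilde M(\mu))\ne\{0\}$. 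This handles the first halves of (i) and (ii) verbatim.

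The only remaining work is to translate the set-theoretic description \eqref{kergam23} into the compact form $\gamma(\lambda)\ker(I-BM(\lambda))$. With $B_1=I$, formula \eqref{kergam23} reads
\[
 \ker(A_B-\lambda)=\bigl\{f_\lambda\in\ker(T-\lambda):\Gamma_0 f_\lambda=\varphi,\ \varphi\in\ker(I-BM(\lambda))\bigr\}.
\]
Since by definition $\gamma(\lambda)=\bigl(\Gamma_0\upharpoonright\ker(T-\lambda)\bigr)^{-1}$, the two conditions $f_\lambda\in\ker(T-\lambda)$ and $\Gamma_0 f_\lambda=\varphi$ are equivalent to $f_\lambda=\gamma(\lambda)\varphi$, and the identity $\ker(A_B-\lambda)=\gamma(\lambda)\ker(I-BM(\lambda))$ follows at once. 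Part (ii) is obtained by the identical argument with $\widetilde\gamma(\mu)=\bigl(\widetilde\Gamma_0\upharpoonright\ker(\widetilde T-\mu)\bigr)^{-1}$ replacing $\gamma(\lambda)$.

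There is no real obstacle here; the corollary is essentially a bookkeeping rewriting of \eqref{kergam23} through the defining property of the $\gamma$-fields. The one point that deserves a brief remark is a well-definedness check: any $\varphi\in\ker(I-BM(\lambda))$ automatically satisfies $\varphi\in\dom M(\lambda)=\ran\Gamma_0=\dom\gamma(\lambda)$, because $\varphi=BM(\lambda)\varphi$ forces $\varphi\in\dom(BM(\lambda))\subset\dom M(\lambda)$. Hence $\gamma(\lambda)\varphi$ makes sense, and Proposition~\ref{gamprop}(i) together with \eqref{weyl123} yields $\Gamma_0\gamma(\lambda)\varphi=\varphi$ and $\Gamma_1\gamma(\lambda)\varphi=M(\lambda)\varphi\in\dom B$ with $B\Gamma_1\gamma(\lambda)\varphi=BM(\lambda)\varphi=\varphi=\Gamma_0\gamma(\lambda)\varphi$, confirming $\gamma(\lambda)\varphi\in\dom A_B$ and $(A_B-\lambda)\gamma(\lambda)\varphi=0$, as required.
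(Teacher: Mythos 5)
Your proposal is correct and follows exactly the route the paper intends: the corollary is stated as the special case $B_1=I$, $B_2=B$ (resp. $\widetilde B_1=I$, $\widetilde B_2=\widetilde B$) of Theorem~\ref{bsthm}, and your translation of \eqref{kergam23} into $\gamma(\lambda)\ker(I-BM(\lambda))$ via the definition of the $\gamma$-field, including the check that $\ker(I-BM(\lambda))\subset\dom M(\lambda)=\ran\Gamma_0$, is precisely the bookkeeping the paper leaves implicit.
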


\begin{proof}[Proof of Theorem~\ref{bsthm}]
 (i) Assume first that $\lambda\in\sigma_p(A_{B_1 B_2})$ and let $f_\lambda\not=0$ be a corresponding eigenfunction. Then $f_\lambda\in\ker(T-\lambda)$ and $\Gamma_0 f_\lambda\not=0$ as otherwise $f_\lambda\in\dom A_0\cap\ker(T-\lambda)=
 \ker(A_0-\lambda)=\{0\}$. Furthermore, $f_\lambda\in\dom A_{B_1B_2}$ satisfies the boundary condition
 $\Gamma_0 f_\lambda= B_1B_2\Gamma_1 f_\lambda$ and hence we obtain $B_2\Gamma_1 f_\lambda\not =0$ and  $B_1B_2\Gamma_1 f_\lambda\in\ran\Gamma_0=\dom M(\lambda)$. From the definition of the Weyl function (see \eqref{weyl123}) we conclude
 $$\Gamma_1 f_\lambda=M(\lambda)\Gamma_0 f_\lambda=M(\lambda)B_1B_2\Gamma_1 f_\lambda$$ and therefore
  \begin{equation*}
 \begin{split}
 0&=B_2\Gamma_1 f_\lambda-B_2M(\lambda)B_1B_2\Gamma_1 f_\lambda\\
&=\bigr(I-B_2M(\lambda)B_1\bigl)B_2\Gamma_1 f_\lambda
\end{split}
 \end{equation*}
that is, $B_2\Gamma_1 f_\lambda\in\ker(I-B_2M(\lambda)B_1)$ and, in particular, $\ker(I-B_2M(\lambda)B_1)\not=\{0\}$.

 For the converse let us fix some $\varphi\in\ker(I-B_2M(\lambda)B_1)$, $\varphi\not=0$, and note that 
 $\varphi=B_2M(\lambda)B_1\varphi$ implies, in particular, $B_1\varphi\in\dom M(\lambda)=\ran\Gamma_0$ and $M(\lambda)B_1\varphi\in\dom B_2$. Furthermore, we have 
 $B_1\varphi\not=0$ and 
 \begin{equation}\label{okgutgut}
  B_1\varphi=B_1B_2M(\lambda)B_1\varphi.
 \end{equation}

 Next, we choose 
 $f_\lambda\in\ker(T-\lambda)$ such that $\Gamma_0 f_\lambda=B_1\varphi$. Since 
 $\Gamma_1 f_\lambda=M(\lambda)\Gamma_0 f_\lambda=M(\lambda)B_1\varphi\in\dom B_2$ we have 
 $B_2\Gamma_1 f_\lambda= B_2 M(\lambda)B_1\varphi$ and hence \eqref{okgutgut} implies
 $$
 \Gamma_0 f_\lambda=B_1\varphi=B_1B_2\Gamma_1 f_\lambda.
 $$
 The identity \eqref{kergam23} follows from the above considerations.
 \end{proof}

\begin{theorem}\label{kreinthm}
Assume that the triple
 $\{\cG,(\Gamma_0,\Gamma_1),(\widetilde\Gamma_0,\widetilde\Gamma_1)\}$ satisfies {\rm (G)}, {\rm (D)}, {\rm (M)}, and that the resolvent set of 
 $A_0$ or, equivalently, the resolvent set of $\widetilde A_0$ is nonempty. 
Let $\gamma,\widetilde\gamma$ and $M,\widetilde M$ be the associated $\gamma$-fields and Weyl functions, respectively.
Then the following assertions hold for the operators $A_{B_1 B_2}$ and $\widetilde A_{\widetilde B_1\widetilde B_2}$ in \eqref{abs1212}, and all $\lambda\in\rho(A_0)$
and $\mu\in\rho(\widetilde A_0)$:
\begin{itemize}
\item [{\rm (i)}] If $\lambda\not\in\sigma_p(A_{B_1B_2})$ and $f\in\sH$ is such that
\begin{equation}\label{asskrein}
\widetilde\gamma(\myoverline\lambda)^*f\in\dom B_2\quad\text{and}\quad B_2\widetilde\gamma(\myoverline\lambda)^*f\in\ran(I-B_2M(\lambda)B_1),
\end{equation}
then $f\in\ran(A_{B_1B_2}-\lambda)$ and the  Krein-type formula 
\begin{equation}\label{kreini}
 (A_{B_1B_2}-\lambda)^{-1}f=(A_0-\lambda)^{-1}f+\gamma(\lambda)B_1\bigl(I-B_2M(\lambda)B_1\bigr)^{-1}B_2\widetilde\gamma(\myoverline\lambda)^*f
\end{equation}
holds.
In particular, if \eqref{asskrein} holds for all $f\in\sH$, that is, $
\ran \widetilde\gamma(\myoverline\lambda)^*\subset \dom B_2$ and $\ran B_2\widetilde\gamma(\myoverline\lambda)^*\subset \ran(I-B_2M(\lambda)B_1)$,
then $A_{B_1B_2}-\lambda$ is a bijective operator in $\sH$.
\item [{\rm (ii)}] If $\mu\not\in\sigma_p(\widetilde A_{\widetilde B_1\widetilde B_2})$ and $g\in\sH$ is such that
\begin{equation}\label{asskreinmu}
\gamma(\myoverline\mu)^*g\in\dom \widetilde B_2\quad\text{and}\quad \widetilde B_2\gamma(\myoverline\mu)^*g\in\ran(I-\widetilde B_2 \widetilde M(\mu)\widetilde B_1),
\end{equation}
then $g\in\ran(\widetilde A_{\widetilde B_1\widetilde B_2}-\mu)$ and the Krein-type formula
\begin{equation*}
 (\widetilde A_{\widetilde B_1\widetilde B_2}-\mu)^{-1}g=(\widetilde A_0-\mu)^{-1}g+\widetilde\gamma(\mu) \widetilde B_1\bigl(I-\widetilde B_2 \widetilde M(\mu)\widetilde B_1\bigr)^{-1}\widetilde B_2\gamma(\myoverline\mu)^*g
\end{equation*}
holds.
In particular, if \eqref{asskreinmu} holds for all $g\in\sH$, that is, $\ran\gamma(\myoverline\mu)^*\subset\dom \widetilde B_2$ and $\ran\widetilde B_2\gamma(\myoverline\mu)^*\in\ran(I-\widetilde B_2 \widetilde M(\mu)\widetilde B_1)$, then $\widetilde A_{\widetilde B_1\widetilde B_2}-\mu$ is a bijective operator in $\sH$.
\end{itemize}
\end{theorem}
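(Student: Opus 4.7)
I will establish (i); part (ii) follows by the symmetric argument in which the roles of $T$ and $\widetilde T$, of $\gamma$ and $\widetilde\gamma$, of $M$ and $\widetilde M$, and of $A_0$ and $\widetilde A_0$ are interchanged throughout.

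The strategy is to verify the Krein-type formula by direct inspection: denote the claimed right-hand side by
\begin{equation*}
u \;:=\; (A_0-\lambda)^{-1}f+\gamma(\lambda)B_1\bigl(I-B_2M(\lambda)B_1\bigr)^{-1}B_2\widetilde\gamma(\myoverline\lambda)^*f,
\end{equation*}
and show that $u\in\dom A_{B_1B_2}$ and $(T-\lambda)u=f$. For this to make sense as written, I first need to observe that the inverse $(I-B_2M(\lambda)B_1)^{-1}$ exists as a single-valued map on its range: by the Birman-Schwinger principle in Theorem~\ref{bsthm}~(i), the hypothesis $\lambda\notin\sigma_p(A_{B_1B_2})$ is equivalent to $\ker(I-B_2M(\lambda)B_1)=\{0\}$. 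The hypotheses \eqref{asskrein} on $f$ then guarantee that $B_2\widetilde\gamma(\myoverline\lambda)^*f$ lies in the range of this injective operator, so if I write $\varphi:=B_2\widetilde\gamma(\myoverline\lambda)^*f$ and choose the unique $\psi\in\cG$ with $\psi-B_2M(\lambda)B_1\psi=\varphi$, then $u=(A_0-\lambda)^{-1}f+\gamma(\lambda)B_1\psi$ is unambiguously defined.

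Next I check that $u$ satisfies the boundary condition $\Gamma_0 u=B_1B_2\Gamma_1 u$. Since $(A_0-\lambda)^{-1}f\in\dom A_0=\ker\Gamma_0$ and $\Gamma_0\gamma(\lambda)$ is the identity on $\ran\Gamma_0$ (by definition of $\gamma$), I obtain $\Gamma_0 u=B_1\psi$. For $\Gamma_1 u$, I invoke the key identity $\widetilde\gamma(\myoverline\lambda)^*f=\Gamma_1(A_0-\lambda)^{-1}f$ from Proposition~\ref{gamprop}~(iii) together with $\Gamma_1\gamma(\lambda)=M(\lambda)$ from \eqref{weyl123} to get $\Gamma_1 u=\widetilde\gamma(\myoverline\lambda)^*f+M(\lambda)B_1\psi$. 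Applying $B_2$ (legitimate since $\widetilde\gamma(\myoverline\lambda)^*f\in\dom B_2$ by hypothesis and $M(\lambda)B_1\psi\in\dom B_2$ by construction of $\psi$) and then $B_1$, and using $B_2M(\lambda)B_1\psi=\psi-\varphi$, yields
\begin{equation*}
B_1B_2\Gamma_1 u \;=\; B_1\varphi + B_1(\psi-\varphi) \;=\; B_1\psi \;=\; \Gamma_0 u,
\end{equation*}
so $u\in\dom A_{B_1B_2}$. Applying $T-\lambda$ is then immediate: the first summand of $u$ contributes $f$ and the second lies in $\ker(T-\lambda)$ (being in $\ran\gamma(\lambda)$ by Proposition~\ref{gamprop}~(i)), so $(A_{B_1B_2}-\lambda)u=f$. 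This proves $f\in\ran(A_{B_1B_2}-\lambda)$ and, together with the injectivity coming from $\lambda\notin\sigma_p(A_{B_1B_2})$, yields the Krein-type formula \eqref{kreini}. The ``in particular'' clause follows: if \eqref{asskrein} holds for every $f\in\sH$ then $A_{B_1B_2}-\lambda$ is surjective, and combined with injectivity it is a bijection on $\sH$.

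The main technical point, and the step most prone to slips, is the bookkeeping in the boundary-condition check: one must track where each of the unbounded operators $B_1$, $B_2$, $M(\lambda)$ is being applied and verify that $\psi$, $B_1\psi$, $M(\lambda)B_1\psi$, and $B_2M(\lambda)B_1\psi$ all lie in the appropriate domains. Once $\psi$ is chosen so that $\psi=\varphi+B_2M(\lambda)B_1\psi$ the inclusions $B_1\psi\in\ran\Gamma_0=\dom\gamma(\lambda)$ and $M(\lambda)B_1\psi\in\dom B_2$ are automatic, but it is worth stating them explicitly so that no hidden unbounded composition appears. Everything else is a routine application of $\Gamma_0\gamma(\lambda)=I_{\ran\Gamma_0}$, $\Gamma_1\gamma(\lambda)=M(\lambda)$, and Proposition~\ref{gamprop}~(iii).
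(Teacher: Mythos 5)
Your argument is correct and essentially identical to the paper's own proof: you define the candidate solution, use Theorem~\ref{bsthm}~(i) for injectivity of $I-B_2M(\lambda)B_1$, verify the boundary condition via $\Gamma_0\gamma(\lambda)=I$ on $\ran\Gamma_0$, $\Gamma_1\gamma(\lambda)=M(\lambda)$, and $\Gamma_1(A_0-\lambda)^{-1}f=\widetilde\gamma(\myoverline\lambda)^*f$ from Proposition~\ref{gamprop}~(iii), and then apply $T-\lambda$, exactly as in the paper (which likewise proves only (i) and leaves (ii) to symmetry). One cosmetic caution: in the step $B_1B_2\Gamma_1 u=B_1\varphi+B_1(\psi-\varphi)$ you should not split termwise, since $\varphi$ need not lie in $\dom B_1$; instead apply $B_1$ directly to $B_2\Gamma_1 u=\psi$, which lies in $\dom B_1$ because $\psi\in\dom\bigl(B_2M(\lambda)B_1\bigr)$, as the paper does.
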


In the same spirit as in Corollary~\ref{bscor} we formulate the special case $B_2=B$, $B_1=I$, or $\widetilde B_2=\widetilde B$, $\widetilde B_1=I$, separately
as a corollary.

\begin{corollary}
Assume that the triple
 $\{\cG,(\Gamma_0,\Gamma_1),(\widetilde\Gamma_0,\widetilde\Gamma_1)\}$ satisfies {\rm (G)}, {\rm (D)}, {\rm (M)}, and that the resolvent set of 
 $A_0$ or, equivalently, the resolvent set of $\widetilde A_0$ is nonempty. 
Let $\gamma,\widetilde\gamma$ and $M,\widetilde M$ be the associated $\gamma$-fields and Weyl functions, respectively.
Then the following assertions hold for the operators $A_B$ and $\widetilde A_{\widetilde B}$ in \eqref{abs}, and all $\lambda\in\rho(A_0)$
and $\mu\in\rho(\widetilde A_0)$:
\begin{itemize}
\item [{\rm (i)}] If $\lambda\not\in\sigma_p(A_B)$ and $f\in\sH$ is such that $\widetilde\gamma(\myoverline\lambda)^*f\in\dom B$ and $B\widetilde\gamma(\myoverline\lambda)^*f\in\ran(I-BM(\lambda))$,
then $f\in\ran(A_B-\lambda)$ and 
\begin{equation*}
 (A_B-\lambda)^{-1}f=(A_0-\lambda)^{-1}f+\gamma(\lambda)\bigl(I-BM(\lambda)\bigr)^{-1}B\widetilde\gamma(\myoverline\lambda)^*f.
\end{equation*}
\item [{\rm (ii)}] If $\mu\not\in\sigma_p(\widetilde A_{\widetilde B})$ and $g\in\sH$ is such that $\gamma(\myoverline\mu)^*g\in\dom \widetilde B$ and $\widetilde B\gamma(\myoverline\mu)^*g\in\ran(I-\widetilde B\widetilde M(\mu))$,
then $g\in\ran(\widetilde A_{\widetilde B}-\mu)$ and 
\begin{equation*}
 (\widetilde A_{\widetilde B}-\mu)^{-1}g=(\widetilde A_0-\mu)^{-1}g+\widetilde\gamma(\mu)\bigl(I-\widetilde B\widetilde M(\mu)\bigr)^{-1}\widetilde B\gamma(\myoverline\mu)^*g.
\end{equation*}
\end{itemize}
\end{corollary}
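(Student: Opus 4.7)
The plan is to derive this corollary as a direct specialization of Theorem~\ref{kreinthm} by setting $B_1=I$, $B_2=B$ (respectively $\widetilde B_1=I$, $\widetilde B_2=\widetilde B$), where $I$ denotes the identity operator on $\cG$. I would first verify that under these choices the operator $A_{B_1B_2}$ in \eqref{abs1212} reduces to $A_B$ in \eqref{abs}: the boundary condition $B_1B_2\Gamma_1 f=\Gamma_0 f$ becomes $B\Gamma_1 f=\Gamma_0 f$, so $\dom A_{IB}=\dom A_B$ and the two operators agree. By the same specialization Theorem~\ref{bsthm}(i) shows that $\lambda\in\sigma_p(A_B)$ if and only if $\ker(I-BM(\lambda))\neq\{0\}$, so the hypothesis $\lambda\notin\sigma_p(A_B)$ is interchangeable with $\lambda\notin\sigma_p(A_{IB})$.

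Next I would check that the two conditions in \eqref{asskrein} specialize correctly: $\widetilde\gamma(\myoverline\lambda)^*f\in\dom B_2$ becomes $\widetilde\gamma(\myoverline\lambda)^*f\in\dom B$, and $B_2\widetilde\gamma(\myoverline\lambda)^*f\in\ran(I-B_2M(\lambda)B_1)$ becomes $B\widetilde\gamma(\myoverline\lambda)^*f\in\ran(I-BM(\lambda))$, which are exactly the assumptions in assertion (i) of the corollary. The Krein-type formula \eqref{kreini} then reads
\begin{equation*}
(A_B-\lambda)^{-1}f=(A_0-\lambda)^{-1}f+\gamma(\lambda)\,I\bigl(I-BM(\lambda)\bigr)^{-1}B\widetilde\gamma(\myoverline\lambda)^*f,
\end{equation*}
and since the factor $I$ on $\cG$ is trivial, this is exactly the formula claimed in (i).

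Assertion (ii) follows by the symmetric argument: set $\widetilde B_1=I$ and $\widetilde B_2=\widetilde B$ in the tilde-part of Theorem~\ref{kreinthm}, noting that the boundary condition $\widetilde B_1\widetilde B_2\widetilde\Gamma_1 g=\widetilde\Gamma_0 g$ collapses to $\widetilde B\widetilde\Gamma_1 g=\widetilde\Gamma_0 g$, and the hypothesis \eqref{asskreinmu} simplifies to the one stated in (ii). The Krein-type formula specializes in the same way.

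There is no real obstacle here, since every ingredient has been established in Theorem~\ref{kreinthm}; the only point worth verifying explicitly is that the products in the abstract formula collapse properly (in particular that $B_1=I$ makes $\gamma(\lambda)B_1=\gamma(\lambda)$ and leaves $(I-B_2M(\lambda)B_1)^{-1}=(I-BM(\lambda))^{-1}$), so the corollary is a clean restatement rather than an independent result.
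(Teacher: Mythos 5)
Your proposal is correct and matches the paper's approach exactly: the corollary is stated there precisely as the special case $B_1=I$, $B_2=B$ (respectively $\widetilde B_1=I$, $\widetilde B_2=\widetilde B$) of Theorem~\ref{kreinthm}, with no separate argument beyond the specialization you carry out. Your verification that the boundary conditions, the hypotheses \eqref{asskrein} and \eqref{asskreinmu}, and the Krein-type formulae collapse correctly is all that is needed.
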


\begin{proof}[Proof of Theorem~\ref{kreinthm}]
 (i) Let $f\in\sH$ and observe that by the assumptions the element $h\in\sH$ given by
 \begin{equation}\label{haha}
  h=(A_0-\lambda)^{-1}f+\gamma(\lambda)B_1\bigl(I-B_2M(\lambda)B_1\bigr)^{-1}B_2\widetilde\gamma(\myoverline\lambda)^*f
 \end{equation}
 is well defined. In fact, the inverse $(I-B_2M(\lambda)B_1)^{-1}$ exists 
 as $\lambda\not\in\sigma_p(A_{B_1B_2})$ (see Theorem~\ref{bsthm}~(i)) and hence \eqref{asskrein} ensures that
 $$
 \bigl(I-B_2M(\lambda)B_1\bigr)^{-1}B_2\widetilde\gamma(\myoverline\lambda)^*f \in \dom\bigl(I-B_2M(\lambda)B_1\bigr)
 \subset \dom M(\lambda)B_1.
 $$
 Since $\dom  M(\lambda)=\dom\gamma(\lambda)$ this together with the definition of the $\gamma$-field implies 
 $$
 \gamma(\lambda)B_1\bigl(I-B_2M(\lambda)B_1\bigr)^{-1}B_2\widetilde\gamma(\myoverline\lambda)^*f\in\ker(T-\lambda)
 $$
 and, in particular, 
 all products on the right hand side are meaningful. We claim that $h\in\dom A_{B_1B_2}$. 
 First is clear that $h\in\dom T$ and $\dom A_0=\ker\Gamma_0$,
 the definition of $\gamma$ and $M$, and Proposition~\ref{gamprop}~(iii) imply
 \begin{equation}\label{id1}
 \begin{split}
  \Gamma_0 h&=\Gamma_0(A_0-\lambda)^{-1}f+\Gamma_0\gamma(\lambda)B_1\bigl(I-B_2M(\lambda)B_1\bigr)^{-1}B_2\widetilde\gamma(\myoverline\lambda)^*f\\
            &=B_1\bigl(I-B_2M(\lambda)B_1\bigr)^{-1}B_2\widetilde\gamma(\myoverline\lambda)^*f
          \end{split}
 \end{equation}
 and
\begin{equation}\label{id2}
\begin{split}  
  \Gamma_1 h&=\Gamma_1(A_0-\lambda)^{-1}f+\Gamma_1\gamma(\lambda)B_1\bigl(I-B_2M(\lambda)B_1\bigr)^{-1}B_2\widetilde\gamma(\myoverline\lambda)^*f\\
            &=\widetilde\gamma(\myoverline\lambda)^*f+M(\lambda)B_1\bigl(I-B_2M(\lambda)B_1\bigr)^{-1}B_2\widetilde\gamma(\myoverline\lambda)^*f.
\end{split}
 \end{equation}
Since $\widetilde\gamma(\myoverline\lambda)^*f\in\dom B_2$ and $(I-B_2M(\lambda)B_1)^{-1}B_2\widetilde\gamma(\myoverline\lambda)^*f\in\dom B_2M(\lambda)B_1$ we conclude from
\eqref{id2} that $\Gamma_1 h\in\dom B_2$ and 
\begin{equation*}
\begin{split}
 B_2\Gamma_1 h&=B_2\widetilde\gamma(\myoverline\lambda)^*f+B_2M(\lambda)B_1\bigl(I-B_2M(\lambda)B_1\bigr)^{-1}B_2\widetilde\gamma(\myoverline\lambda)^*f\\
 &=\bigl((I-B_2M(\lambda)B_1)+B_2M(\lambda)B_1\bigr) \bigl(I-B_2M(\lambda)B_1\bigr)^{-1}B_2\widetilde\gamma(\myoverline\lambda)^*f\\
 &=\bigl(I-B_2M(\lambda)B_1\bigr)^{-1}B_2\widetilde\gamma(\myoverline\lambda)^*f.
\end{split} 
\end{equation*}
The above identity shows that $B_2\Gamma_1 h\in\dom B_2M(\lambda)B_1\subset \dom B_1$ and hence
\begin{equation*}
 B_1 B_2\Gamma_1 h=B_1\bigl(I-B_2M(\lambda)B_1\bigr)^{-1}B_2\widetilde\gamma(\myoverline\lambda)^*f=\Gamma_0 h,
\end{equation*}
where \eqref{id1} was used in the last step. Therefore, $h\in\dom A_{B_1B_2}$ and from $A_{B_1B_2}\subset T$, $A_0\subset T$, and 
$\ran\gamma(\lambda)=\ker(T-\lambda)$ we conclude
\begin{equation*}
\begin{split}
 (A_{B_1B_2}-\lambda) h&=(T-\lambda) h \\
 &=(T-\lambda)(A_0-\lambda)^{-1}f+(T-\lambda)\gamma(\lambda)B_1\bigl(I-B_2M(\lambda)B_1\bigr)^{-1}B_2\widetilde\gamma(\myoverline\lambda)^*f\\
 &=f,
\end{split}
 \end{equation*}
that is, $f\in\ran(A_{B_1 B_2}-\lambda)$. Now the Krein type formula \eqref{kreini} follows from \eqref{haha} and 
$h=(A_{B_1B_2}-\lambda)^{-1}f$.
\end{proof}

Note that Theorem~\ref{kreinthm} provides criteria such that the extensions $A_{B_1 B_2}-\lambda$ or $\widetilde A_{\widetilde B_1\widetilde B_2}-\mu$ are bijective,
but here their inverses are not necessarily bounded and hence it does not follow automatically 
that $A_{B_1 B_2}$ or $\widetilde A_{\widetilde B_1\widetilde B_2}$ are closed extension with a nonempty resolvent set. The next theorem is our first 
result in this direction; it is an easy consequence of Theorem~\ref{kreinthm} and Lemma~\ref{bbblem}.

\begin{theorem}
Assume that the triple
 $\{\cG,(\Gamma_0,\Gamma_1),(\widetilde\Gamma_0,\widetilde\Gamma_1)\}$ satisfies {\rm (G)}, {\rm (D)}, {\rm (M)}, and that the resolvent set of 
 $A_0$ or, equivalently, the resolvent set of $\widetilde A_0$ is nonempty.
Let $B_1,B_2,B_1',B_2'$ be operators in $\cG$ that satisfy 
 $$
 (B_1B_2\varphi,\psi)=(\varphi,B_1'B_2'\psi ), \quad\varphi\in\dom B_1B_2,\,\,\psi\in \dom B_1'B_2',
 $$
and consider the operators $A_{B_1 B_2}$ and $\widetilde A_{B_1'B_2'}$ in \eqref{abs1212}. If there exists some $\lambda\in\rho(A_0)$ such that assumption \eqref{asskrein} holds for $\lambda$ and all $f\in\sH$ and 
assumption \eqref{asskreinmu} holds for $\myoverline\lambda$ and all $g\in\sH$, then 
\begin{equation}\label{nabitte}
A_{B_1 B_2} = (\widetilde A_{B_1'B_2'})^*
\end{equation}
is a closed operator in $\sH$ with nonempty resolvent set.
\end{theorem}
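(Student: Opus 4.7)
The idea is to combine the symmetrised Green's identity from Step~1 with the algebraic bijectivity furnished by Theorem~\ref{kreinthm} to identify the resolvents of $A_{B_1B_2}$ at $\lambda$ and of $\widetilde A_{B_1'B_2'}$ at $\myoverline\lambda$ as Hilbert space adjoints of each other. Once that identification is in place, closedness of $A_{B_1B_2}$ and nonemptiness of its resolvent set follow from the closed graph theorem, and the target equality \eqref{nabitte} is then a short resolvent-based injectivity argument.

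\textbf{Step 1 — adjoint pair inclusions.} I would first verify
\begin{equation*}
 A_{B_1B_2}\subset(\widetilde A_{B_1'B_2'})^*\quad\text{and}\quad\widetilde A_{B_1'B_2'}\subset(A_{B_1B_2})^*
\end{equation*}
by repeating the proof of Lemma~\ref{bbblem} verbatim with the products $B_1B_2$ and $B_1'B_2'$ in place of $B$ and $B'$. Concretely, for $f\in\dom A_{B_1B_2}$ and $g\in\dom\widetilde A_{B_1'B_2'}$ the boundary conditions $\Gamma_0 f=B_1B_2\Gamma_1 f$ and $\widetilde\Gamma_0 g=B_1'B_2'\widetilde\Gamma_1 g$ together with (G) and the duality hypothesis give $(A_{B_1B_2}f,g)=(f,\widetilde A_{B_1'B_2'}g)$.

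\textbf{Steps 2--4 — bijectivity, closed graph, equality.} The ``in particular'' clauses of Theorem~\ref{kreinthm}~(i), (ii) applied to $\lambda$ and $\myoverline\lambda$ furnish everywhere defined inverses $R=(A_{B_1B_2}-\lambda)^{-1}$ and $\widetilde R=(\widetilde A_{B_1'B_2'}-\myoverline\lambda)^{-1}$ as algebraic operators on $\sH$. For $f,g\in\sH$, Step~1 yields
\begin{equation*}
 (f,\widetilde R g)=\bigl((A_{B_1B_2}-\lambda)Rf,\widetilde R g\bigr)=\bigl(Rf,(\widetilde A_{B_1'B_2'}-\myoverline\lambda)\widetilde R g\bigr)=(Rf,g),
\end{equation*}
so $R=\widetilde R^*$ as Hilbert space adjoints. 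Thus $R$ is closed and everywhere defined, hence bounded by the closed graph theorem. Consequently $\lambda\in\rho(A_{B_1B_2})$ and in particular $A_{B_1B_2}$ is closed with nonempty resolvent set; by symmetry $\myoverline\lambda\in\rho(\widetilde A_{B_1'B_2'})$. To upgrade $\widetilde A_{B_1'B_2'}\subset(A_{B_1B_2})^*$ to equality, note that $\lambda\in\rho(A_{B_1B_2})$ forces $\myoverline\lambda\in\rho((A_{B_1B_2})^*)$, so $(A_{B_1B_2})^*-\myoverline\lambda$ is injective. Given $z\in\dom(A_{B_1B_2})^*$, bijectivity of $\widetilde A_{B_1'B_2'}-\myoverline\lambda$ produces $z'\in\dom\widetilde A_{B_1'B_2'}$ with $(\widetilde A_{B_1'B_2'}-\myoverline\lambda)z'=((A_{B_1B_2})^*-\myoverline\lambda)z$, and injectivity forces $z=z'$; hence $(A_{B_1B_2})^*=\widetilde A_{B_1'B_2'}$. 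Since $A_{B_1B_2}$ is closed and densely defined (it contains $\widetilde S$ by Lemma~\ref{lemmakers} together with linearity of $B_1,B_2$), taking adjoints yields $A_{B_1B_2}=(A_{B_1B_2})^{**}=(\widetilde A_{B_1'B_2'})^*$, which is \eqref{nabitte}.

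\textbf{Main obstacle.} The crux is the observation in Step~2 that the symmetrised adjoint pair structure together with the two bijectivity conclusions of Theorem~\ref{kreinthm} is already enough to identify $R=\widetilde R^*$; this \emph{a priori} identification is what gives closedness of $R$ for free and converts the algebraic bijectivity into genuine boundedness via the closed graph theorem, thereby avoiding any direct boundedness analysis of the possibly unbounded factors $B_i,B_i'$ and $(I-B_2 M(\lambda)B_1)^{-1}$ appearing in the Krein-type formula \eqref{kreini}.
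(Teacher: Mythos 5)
Your proof is correct, and it runs on the same two inputs as the paper's argument: the adjoint-pair inclusions from Lemma~\ref{bbblem} applied to the products $B_1B_2$, $B_1'B_2'$, and the bijectivity of $A_{B_1B_2}-\lambda$ and $\widetilde A_{B_1'B_2'}-\myoverline\lambda$ from the two ``in particular'' clauses of Theorem~\ref{kreinthm}. Where you differ is in the packaging. The paper concludes in one stroke at the operator level: $(\widetilde A_{B_1'B_2'})^*-\lambda$ is closed (being a shifted adjoint) and injective because $\ker\bigl((\widetilde A_{B_1'B_2'})^*-\lambda\bigr)=\ran\bigl(\widetilde A_{B_1'B_2'}-\myoverline\lambda\bigr)^\perp=\{0\}$, so the surjective operator $A_{B_1B_2}-\lambda$ contained in it must coincide with it; this gives \eqref{nabitte}, closedness, and $\lambda\in\rho(A_{B_1B_2})$ simultaneously. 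You instead detour through the resolvents: identifying $R=\widetilde R^*$, invoking the closed graph theorem, proving $(A_{B_1B_2})^*=\widetilde A_{B_1'B_2'}$ by the same ``surjection inside an injection'' device one adjoint level up, and then double-adjointing back. This is longer and obliges you to verify that $A_{B_1B_2}$ is densely defined and closed before using $\myoverline\lambda\in\rho\bigl((A_{B_1B_2})^*\bigr)$ and $A_{B_1B_2}^{**}=A_{B_1B_2}$; you do this correctly via $\widetilde S\subset A_{B_1B_2}$ and Lemma~\ref{lemmakers} (strictly, the density observation should be placed before it is first used, but that is only an ordering matter). A small compensation of your route is that it makes explicit a dense-domain point the paper leaves implicit, namely that for $(\widetilde A_{B_1'B_2'})^*$ to be an operator one uses $S\subset\widetilde A_{B_1'B_2'}$ in the same way. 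Both proofs use Theorem~\ref{kreinthm} identically, so any reservations about its hypotheses apply equally to each.
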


\begin{proof}
 It follows from Theorem~\ref{kreinthm} that both operators $A_{B_1 B_2}-\lambda$ and $\widetilde A_{B_1'B_2'}-\myoverline\lambda$ are bijective. Moreover, Lemma~\ref{bbblem} implies
 \begin{equation}\label{wirdgut}
 A_{B_1 B_2}-\lambda \subset (\widetilde A_{B_1'B_2'})^*-\lambda 
 \end{equation}
  and it is clear that $(\widetilde A_{B_1'B_2'})^*-\lambda$ is closed. Furthermore, is $(\widetilde A_{B_1'B_2'})^*-\lambda$ injective, since we have 
  $$\ker \bigl((\widetilde A_{B_1'B_2'})^*-\lambda \bigr)=\ran\bigl(\widetilde A_{B_1'B_2'}-\myoverline\lambda\bigr)^\bot=\{0\}.$$
  Since $A_{B_1 B_2}-\lambda$ is bijective it follows that the operators in \eqref{wirdgut} coincide and hence we conclude \eqref{nabitte}. In particular, 
  $\lambda\in\rho(A_{B_1 B_2})$.
\end{proof}

Now we provide more direct and explicit criteria on the Weyl function and the parameter $B_1B_2$ such that $A_{B_1 B_2}$ becomes 
a closed operator with a nonempty resolvent set. We do not formulate a variant of Theorem~\ref{bsextendedsatz} for the operators $\widetilde A_{\widetilde B_1 \widetilde B_2}$,
and we also leave it to the reader to formulate the corresponding versions of Corollary~\ref{cor1} and Corollary~\ref{bsextendedcorchen} below.

\begin{theorem}\label{bsextendedsatz}
Assume that the triple
 $\{\cG,(\Gamma_0,\Gamma_1),(\widetilde\Gamma_0,\widetilde\Gamma_1)\}$ satisfies {\rm (G)}, {\rm (D)}, {\rm (M)}, and that the resolvent set of 
 $A_0$  is nonempty. 
Let $\gamma,\widetilde\gamma$ and $M$ be the associated $\gamma$-fields and Weyl function, respectively.
Assume that $B_1$ and $B_2$ are closable operators in $\cG$ and that for some $\lambda_0\in\rho(A_0)$ the following conditions hold: 
\begin{enumerate}
\item[{\rm (i)}] $1\in\rho(B_2\myoverline{M(\lambda_0)B_1})$;
\item[{\rm (ii)}] $\ran(B_2\myoverline{M(\lambda_0)B_1})\subset\ran\Gamma_0\cap\dom B_1$;
\item[{\rm (iii)}] $\ran(B_1\upharpoonright\ran\Gamma_0)\subset\ran\Gamma_0$;
\item[{\rm (iv)}] $\ran(B_2\upharpoonright\ran\Gamma_1)\subset\ran\Gamma_0$; 
\item[{\rm (v)}] $\ran(\Gamma_1\upharpoonright\ker\Gamma_0)\subset\dom B_1 B_2$.
\end{enumerate}
Then  $A_{B_1 B_2}$  in \eqref{abs1212}
is a closed operator with a nonempty resolvent set and for all $\lambda\in\rho(A_0)\cap\rho(A_{B_1B_2})$ the Krein-type resolvent formula
\begin{equation}\label{Eq_Krein_formula}
(A_{B_1B_2}-\lambda)^{-1}=(A_0-\lambda)^{-1}+\gamma(\lambda)B_1\bigl(I-B_2M(\lambda)B_1\bigr)^{-1}B_2\widetilde\gamma(\myoverline{\lambda})^*
\end{equation}
is valid.
\end{theorem}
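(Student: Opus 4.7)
My strategy is to apply Theorem~\ref{kreinthm} at the distinguished point $\lambda_0$ by constructing the resolvent candidate from the bounded inverse $K:=(I-B_2\overline{M(\lambda_0)B_1})^{-1}$ provided by (i), to verify the relevant domain/range inclusions from (ii)--(v), and finally to upgrade algebraic bijectivity to $\lambda_0\in\rho(A_{B_1B_2})$ via a sequence of closed-graph arguments; closedness of $A_{B_1B_2}$ and the resolvent formula for general $\lambda$ then follow. First, by Theorem~\ref{bsthm}(i), any hypothetical $\lambda_0\in\sigma_p(A_{B_1B_2})$ would yield some nonzero $\varphi\in\ker(I-B_2M(\lambda_0)B_1)\subset\ker(I-B_2\overline{M(\lambda_0)B_1})=\{0\}$, a contradiction.

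Next, for $f\in\sH$ I set $\psi:=KB_2\widetilde\gamma(\myoverline{\lambda_0})^*f$ and $h:=(A_0-\lambda_0)^{-1}f+\gamma(\lambda_0)B_1\psi$. Because $\widetilde\gamma(\myoverline{\lambda_0})^*f=\Gamma_1(A_0-\lambda_0)^{-1}f$ by Proposition~\ref{gamprop}(iii), condition (v) places this element in $\dom B_1B_2\subset\dom B_2$, so $\psi$ is defined. The identity $\psi=B_2\widetilde\gamma(\myoverline{\lambda_0})^*f+B_2\overline{M(\lambda_0)B_1}\psi$ combined with (iv) and (ii) forces both summands into $\ran\Gamma_0$, whence by (iii) $\psi\in\ran\Gamma_0\subset\dom B_1$ and $B_1\psi\in\ran\Gamma_0=\dom\gamma(\lambda_0)$, so $h$ is well defined in $\dom T$. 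A direct computation then yields $\Gamma_0 h=B_1\psi$ and $\Gamma_1h=\widetilde\gamma(\myoverline{\lambda_0})^*f+M(\lambda_0)B_1\psi$, where $M(\lambda_0)B_1\psi=\overline{M(\lambda_0)B_1}\psi$ belongs to $\ran\Gamma_1\subset\dom B_2$ by Proposition~\ref{mprop}(i) and (iv); therefore $B_2\Gamma_1h=B_2\widetilde\gamma(\myoverline{\lambda_0})^*f+B_2\overline{M(\lambda_0)B_1}\psi=\psi\in\dom B_1$ by (iii), and $B_1B_2\Gamma_1h=B_1\psi=\Gamma_0h$. Thus $h\in\dom A_{B_1B_2}$ and $(A_{B_1B_2}-\lambda_0)h=f$, so $R(\lambda_0):=(A_0-\lambda_0)^{-1}+\gamma(\lambda_0)B_1KB_2\widetilde\gamma(\myoverline{\lambda_0})^*$ is a right inverse of $A_{B_1B_2}-\lambda_0$.

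For boundedness of $R(\lambda_0)$, set $L:=B_2\widetilde\gamma(\myoverline{\lambda_0})^*$. Condition (v) makes $L$ everywhere defined on $\sH$; closability of $B_2$ together with boundedness of $\widetilde\gamma(\myoverline{\lambda_0})^*$ (Proposition~\ref{gamprop}(iii)) shows $L$ is closed, hence bounded by the closed graph theorem. Then $KL$ is bounded, the preceding paragraph places $\ran KL$ inside $\ran\Gamma_0\subset\dom B_1$, and the same closability argument for $B_1$ renders $B_1KL$ closed, everywhere defined, and therefore bounded. Finally $\ran(B_1KL)\subset\ran\Gamma_0=\dom\gamma(\lambda_0)$ by (iii) and $\gamma(\lambda_0)$ is bounded on $\ran\Gamma_0$ (Proposition~\ref{gamprop}(i)), so $R(\lambda_0)$ is bounded on $\sH$. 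Combined with the injectivity of the first paragraph, $A_{B_1B_2}-\lambda_0=R(\lambda_0)^{-1}$ is closed, $A_{B_1B_2}$ is closed, and $\lambda_0\in\rho(A_{B_1B_2})$. For general $\lambda\in\rho(A_0)\cap\rho(A_{B_1B_2})$, I write $h:=(A_{B_1B_2}-\lambda)^{-1}f$ and decompose along $\ker\Gamma_0\,\dot +\,\ker(T-\lambda)$ to obtain $h=(A_0-\lambda)^{-1}f+\gamma(\lambda)\Gamma_0h$; setting $\varphi:=B_2\Gamma_1h$ and using $\Gamma_1h=\widetilde\gamma(\myoverline\lambda)^*f+M(\lambda)\Gamma_0h$ together with the boundary condition $B_1\varphi=\Gamma_0h$, one reads off $(I-B_2M(\lambda)B_1)\varphi=B_2\widetilde\gamma(\myoverline\lambda)^*f$; injectivity of $I-B_2M(\lambda)B_1$ via Theorem~\ref{bsthm}(i) inverts this relation, and substitution delivers \eqref{Eq_Krein_formula}.

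The main obstacle is the boundedness step: since $B_1$ and $B_2$ are only required to be closable (not bounded or closed), no single factor in $\gamma(\lambda_0)B_1KB_2\widetilde\gamma(\myoverline{\lambda_0})^*$ is bounded in isolation. The delicate point is to exploit conditions (ii)--(iv) to trap every intermediate range inside $\ran\Gamma_0$ (respectively $\ran\Gamma_1$), where the $\gamma$-field and Weyl-function machinery of Proposition~\ref{gamprop}(i) and Proposition~\ref{mprop}(i) supplies built-in continuity; successive applications of the closed graph theorem, together with closability of $B_1$ and $B_2$, then propagate boundedness through the full composite.
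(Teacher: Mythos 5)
Your proposal is correct and takes essentially the same route as the paper: exclude $\lambda_0$ from $\sigma_p(A_{B_1B_2})$ via Theorem~\ref{bsthm}, use (ii)--(v) to show $B_2\widetilde\gamma(\myoverline{\lambda_0})^*f$ lies in $\ran(I-B_2M(\lambda_0)B_1)$ so that the Krein formula of Theorem~\ref{kreinthm} holds at $\lambda_0$ for every $f$, obtain boundedness of the perturbation term from closability of $B_1,B_2$ plus the closed graph theorem (hence closedness of $A_{B_1B_2}$ and $\lambda_0\in\rho(A_{B_1B_2})$), and then run the same decomposition argument to verify \eqref{asskrein} and the formula for general $\lambda\in\rho(A_0)\cap\rho(A_{B_1B_2})$. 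One small labeling slip: $\psi\in\dom B_1$ comes from (ii) and (v) (each summand of $\psi=B_2\widetilde\gamma(\myoverline{\lambda_0})^*f+B_2\myoverline{M(\lambda_0)B_1}\psi$ lies in $\dom B_1$), not from (iii), which only gives $B_1\psi\in\ran\Gamma_0$; this does not affect the argument.
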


\begin{proof}
We verify the inclusion
\begin{equation}\label{inc11}
\ran\bigl(B_2\widetilde\gamma(\myoverline\lambda_0)^*\bigr)\subset\ran\bigl(I-B_2M(\lambda_0)B_1\bigr).
\end{equation}
In fact, consider some $\psi\in\ran(B_2\widetilde\gamma(\myoverline\lambda_0)^*)$. Then 
$$\psi=B_2\widetilde\gamma(\myoverline\lambda_0)^*f=B_2\Gamma_1(A_0-\lambda_0)^{-1}f$$ 
for some $f\in\sH$ by Proposition~\ref{gamprop}~(iii)
and from $\dom A_0=\ker\Gamma_0$ and conditions (iv)--(v) we  obtain $\psi\in\ran\Gamma_0\cap\dom B_1$. By condition (i) 
\begin{equation}\label{yyy}
\varphi:=\bigl(I-B_2\myoverline{M(\lambda_0)B_1}\bigr)^{-1}\psi
\end{equation}
is well defined and $\varphi-\psi=B_2\myoverline{M(\lambda_0)B_1}\varphi\in\ran\Gamma_0\cap\dom B_1$ by (ii). Hence also $\varphi\in\ran\Gamma_0\cap\dom B_1$
and (iii) implies $B_1\varphi\in\ran\Gamma_0=\dom M(\lambda_0)$. Therefore, 
$B_2\myoverline{M(\lambda_0)B_1}\varphi=B_2M(\lambda_0)B_1\varphi$ and together with \eqref{yyy} we conclude 
\begin{equation*}
\bigl(I-B_2M(\lambda_0)B_1\bigr)\varphi=\psi,
\end{equation*}
which shows \eqref{inc11}.

It is clear from condition (i) and Theorem~\ref{bsthm}~(i) that $\lambda_0\not\in\sigma_p(A_{B_1B_2})$ and by the above observation we can apply Theorem~\ref{kreinthm}~(i)
for $\lambda_0\in\rho(A_0)$. More precisely, for any $f\in\sH$ we have $\widetilde\gamma(\myoverline\lambda_0)^*f\in\dom B_2$ by condition (v) and 
$B_2\widetilde\gamma(\myoverline\lambda_0)^*f\in \ran(I-B_2M(\lambda_0)B_1)$ was shown in \eqref{inc11}. Hence \eqref{asskrein} is valid for all $f\in\sH$ and 
\begin{equation}\label{Eq_Krein_formulalambda0}
(A_{B_1B_2}-\lambda_0)^{-1}f=(A_0-\lambda_0)^{-1}f+\gamma(\lambda_0)B_1\bigl(I-B_2M(\lambda_0)B_1\bigr)^{-1}B_2\widetilde\gamma(\myoverline\lambda_0)^*f
\end{equation}
holds. Moreover, as $B_2$ is closable and $\widetilde\gamma(\myoverline\lambda_0)^*$ is everywhere defined and bounded (see Proposition~\ref{gamprop}~(iii)) it follows that $B_2\widetilde\gamma(\myoverline\lambda_0)^*$ is closable and hence closed and everywhere defined, and thus bounded. Similarly, condition (i) and the assumption that $B_1$ is closable 
imply that $B_1(I-B_2\myoverline{M(\lambda_0)B_1})^{-1}$ is everywhere defined and bounded and hence the restriction $B_1(I-B_2 M(\lambda_0)B_1)^{-1}$ is also bounded.
Furthermore, $\gamma(\lambda_0)$ is a bounded operator by Proposition~\ref{gamprop}~(i). Summing up we have shown that
\begin{equation*}
 \gamma(\lambda_0)B_1\bigl(I-B_2M(\lambda_0)B_1\bigr)^{-1}B_2\widetilde\gamma(\myoverline\lambda_0)^*
\end{equation*}
is a bounded and everywhere defined operator. The same is true for $(A_0-\lambda_0)^{-1}$ and from \eqref{Eq_Krein_formulalambda0} we conclude that 
$(A_{B_1B_2}-\lambda_0)^{-1}$ is a bounded and everywhere defined operator, and hence closed. This implies that $A_{B_1B_2}$ is closed and $\lambda_0\in\rho(A_{B_1B_2})$.

Now consider $\lambda\in\rho(A_{B_1B_2})\cap\rho(A_0)$. As above we have for any $f\in\sH$ that $\widetilde\gamma(\myoverline\lambda)^*f\in\dom B_2$ by condition (v). We claim that
\begin{equation}\label{jaja}
 B_2\widetilde\gamma(\myoverline\lambda)^*f\in\ran\bigl(I-B_2M(\lambda)B_1\bigr).
\end{equation}
For this we consider $k=(A_{B_1B_2}-\lambda)^{-1}f$ and $h=(A_0-\lambda)^{-1}f$. Note that
$B_1B_2\Gamma_1 k=\Gamma_0 k$ and, in particular, $\Gamma_1 k\in\dom B_2$. Moreover, $\Gamma_0 h=0$ and from
$(T-\lambda)(k-h)=0$ we conclude $M(\lambda)\Gamma_0(k-h)=\Gamma_1(k-h)$. Therefore,
\begin{equation}\label{mmm}
 M(\lambda)B_1 B_2 \Gamma_1 k=M(\lambda)\Gamma_0 k=M(\lambda)\Gamma_0 (k-h)=\Gamma_1 (k-h).
\end{equation}
As $\Gamma_1 k\in\dom B_2$ and $\Gamma_1 h=\Gamma_1 (A_0-\lambda)^{-1}f=\widetilde\gamma(\myoverline\lambda)^*f\in\dom B_2$ by (v) 
we see that $M(\lambda)B_1 B_2 \Gamma_1 k\in\dom B_2$ and hence the element
$$
B_2M(\lambda)B_1 B_2 \Gamma_1 k
$$
is well defined. Now we use \eqref{mmm} and $\Gamma_1 h=\widetilde\gamma(\myoverline\lambda)^*f$ and compute
\begin{equation*}
 \bigl(I-B_2M(\lambda)B_1\bigr)B_2 \Gamma_1 k=B_2 \Gamma_1 k- B_2\Gamma_1 (k-h)= B_2\Gamma_1 h=B_2\widetilde\gamma(\myoverline\lambda)^*f,
\end{equation*}
which shows \eqref{jaja}. Therefore, both conditions in \eqref{asskrein} are satisfied for all $\lambda\in\rho(A_{B_1B_2})\cap\rho(A_0)$ and $f\in\sH$,
and hence the Krein-type resolvent formula
\eqref{Eq_Krein_formula} follows from Theorem~\ref{kreinthm}.
\end{proof}

In the special case $B_1=I$ and $B_2=B$ one obtains the following statement.

\begin{corollary}\label{cor1}
Assume that the triple
 $\{\cG,(\Gamma_0,\Gamma_1),(\widetilde\Gamma_0,\widetilde\Gamma_1)\}$ satisfies {\rm (G)}, {\rm (D)}, {\rm (M)}, and that the resolvent set of 
 $A_0$ or, equivalently, the resolvent set of $\widetilde A_0$ is nonempty. 
Let $\gamma,\widetilde\gamma$ and $M,\widetilde M$ be the associated $\gamma$-fields and Weyl functions, respectively.
Assume that $B$ is a closable operator in $\cG$ and that for some $\lambda_0\in\rho(A_0)$ the following conditions hold: 
\begin{enumerate}
\item[{\rm (i)}] $1\in\rho(B\myoverline{M(\lambda_0)})$;
\item[{\rm (ii)}] $\ran(B\myoverline{M(\lambda_0)})\subset\ran\Gamma_0$;
\item[{\rm (iii)}] $\ran(B\upharpoonright\ran\Gamma_1)\subset\ran\Gamma_0$; 
\item[{\rm (iv)}] $\ran(\Gamma_1\upharpoonright\ker\Gamma_0)\subset\dom B$.
\end{enumerate}
Then $A_{B}$ in \eqref{abs}
is a closed operator with a nonempty resolvent set and for all $\lambda\in\rho(A_0)\cap\rho(A_{B})$ the Krein-type resolvent formula
\begin{equation}\label{malwieder}
(A_{B}-\lambda)^{-1}=(A_0-\lambda)^{-1}+\gamma(\lambda)\bigl(I-BM(\lambda)\bigr)^{-1}B\widetilde\gamma(\myoverline{\lambda})^*
\end{equation}
is valid.
\end{corollary}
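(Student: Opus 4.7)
The plan is to derive Corollary~\ref{cor1} as the direct specialization of Theorem~\ref{bsextendedsatz} under the choice $B_1=I$ and $B_2=B$, so the entire proof reduces to checking that the five hypotheses of the theorem collapse to the four hypotheses of the corollary and that the Krein-type resolvent formula transforms accordingly. First I would note that $I$ is trivially closable (being bounded and everywhere defined), so the closability hypothesis imposed on both factors in Theorem~\ref{bsextendedsatz} reduces to the assumed closability of $B$.

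Next I would run through conditions (i)--(v) of Theorem~\ref{bsextendedsatz} with this choice. Since $B_1=I$ gives $\myoverline{M(\lambda_0)B_1}=\myoverline{M(\lambda_0)}$ and $\dom B_1=\cG$, conditions (i) and (ii) of the theorem read verbatim as conditions (i) and (ii) of the corollary. Condition (iii) of the theorem becomes $\ran(I\upharpoonright\ran\Gamma_0)=\ran\Gamma_0\subset\ran\Gamma_0$, which is automatic. Condition (iv) of the theorem becomes condition (iii) of the corollary. Finally, since $\dom(B_1B_2)=\{\varphi\in\dom B:B\varphi\in\cG\}=\dom B$ when $B_1=I$, condition (v) of the theorem becomes condition (iv) of the corollary.

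With the hypotheses matched, I would invoke Theorem~\ref{bsextendedsatz} to conclude that $A_B=A_{IB}$ is closed and has nonempty resolvent set, and I would specialize the Krein-type resolvent formula \eqref{Eq_Krein_formula} by substituting $B_1=I$ and $B_2=B$ to obtain \eqref{malwieder}. Since the entire argument is a bookkeeping specialization and no new analytic content is needed, the only real "obstacle" is the careful matching of the five conditions of the theorem to the four conditions of the corollary, in particular the observation that condition (iii) of the theorem collapses to a tautology and that the closability assumption is automatically fulfilled for the factor $B_1=I$.
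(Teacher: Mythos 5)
Your proof is correct and is exactly the route the paper intends: Corollary~\ref{cor1} is stated there as the immediate specialization of Theorem~\ref{bsextendedsatz} to $B_1=I$, $B_2=B$, and your matching of the five theorem hypotheses to the four corollary hypotheses (with condition (iii) of the theorem collapsing to a tautology) is the whole content of the argument.
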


In the next corollary the special case $\ran\Gamma_0=\ran\widetilde\Gamma_0=\cG$ is considered. Recall from Lemma~\ref{ddlem}, 
that in this situation $\{\cG,(\Gamma_0,\Gamma_1),(\widetilde\Gamma_0,\widetilde\Gamma_1)\}$ is a quasi boundary triple (or even 
generalized boundary triple) if also {\rm (G)} and {\rm (M)} are required.

\begin{corollary}\label{bsextendedcorchen}
Assume that the triple
 $\{\cG,(\Gamma_0,\Gamma_1),(\widetilde\Gamma_0,\widetilde\Gamma_1)\}$ satisfies {\rm (G)}, 
 $\ran\Gamma_0=\ran\widetilde\Gamma_0=\cG$, {\rm (M)}, and that the resolvent set of 
 $A_0$ or, equivalently, the resolvent set of $\widetilde A_0$ is nonempty. 
Assume that $B_1$ and $B_2$ are closable operators in $\cG$ and that for some $\lambda_0\in\rho(A_0)$ the following conditions hold: 
\begin{enumerate}
\item[{\rm (i)}] $1\in\rho(B_2\myoverline{M(\lambda_0)B_1})$;
\item[{\rm (ii)}] $\ran(B_2\myoverline{M(\lambda_0)B_1})\subset\dom B_1$;
\item[{\rm (iii)}] $\ran(\Gamma_1\upharpoonright\ker\Gamma_0)\subset\dom B_1 B_2$.
\end{enumerate}
Then  $A_{B_1 B_2}$  in \eqref{abs1212}
is a closed operator with a nonempty resolvent set and for all $\lambda\in\rho(A_0)\cap\rho(A_{B_1B_2})$ the Krein-type resolvent formula \eqref{Eq_Krein_formula}
is valid. In the special case $B_1=I$ and $B_2=B$ the conditions (i)--(iii) reduce to  
\begin{enumerate}
\item[{\rm (i)}] $1\in\rho(B M(\lambda_0))$;
\item[{\rm (ii)}] $\ran(\Gamma_1\upharpoonright\ker\Gamma_0)\subset\dom B$;
\end{enumerate}
and $A_{B}$  in \eqref{abs}
is a closed operator with a nonempty resolvent set and for all $\lambda\in\rho(A_0)\cap\rho(A_{B})$ the Krein-type resolvent formula \eqref{malwieder} is valid.
\end{corollary}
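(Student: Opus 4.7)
The plan is to reduce this corollary directly to Theorem~\ref{bsextendedsatz} by showing that the surjectivity hypothesis $\ran\Gamma_0=\ran\widetilde\Gamma_0=\cG$ trivializes several of the range-type conditions appearing there. As a preliminary I would invoke Lemma~\ref{ddlem} to conclude that {\rm (DD)} holds under the present assumptions, so the triple is in fact a quasi boundary triple and the earlier closability statement for $\Gamma_0,\Gamma_1,\widetilde\Gamma_0,\widetilde\Gamma_1$ with respect to the graph norms of $T$ and $\widetilde T$ is at my disposal.

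Next I would check the five hypotheses of Theorem~\ref{bsextendedsatz} in turn. Conditions (i) and (v) of the theorem are literally conditions (i) and (iii) of the corollary. Condition (ii) of the theorem demands $\ran(B_2\myoverline{M(\lambda_0)B_1})\subset\ran\Gamma_0\cap\dom B_1$, and since $\ran\Gamma_0=\cG$ this collapses to condition (ii) of the corollary. Conditions (iii) and (iv) of the theorem each ask that a certain range lie inside $\ran\Gamma_0$, which is the whole of $\cG$, so they are automatic. Hence Theorem~\ref{bsextendedsatz} applies and delivers closedness of $A_{B_1B_2}$, nonemptiness of $\rho(A_{B_1B_2})$, and the Krein-type formula \eqref{Eq_Krein_formula} on $\rho(A_0)\cap\rho(A_{B_1B_2})$.

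For the simplified list in the special case $B_1=I$, $B_2=B$ the only nontrivial point is to identify the theorem's condition $1\in\rho(B\myoverline{M(\lambda_0)})$ with the stated $1\in\rho(BM(\lambda_0))$. For this I would argue that $M(\lambda_0)$ is already closed in the present setting: by Proposition~\ref{gamprop}~(i) the $\gamma$-field $\gamma(\lambda_0)$ is bounded and now everywhere defined on $\cG$, and a short limit argument using the closability of $\Gamma_1$ (valid under (DD)) shows that $M(\lambda_0)=\Gamma_1\gamma(\lambda_0)$ is closable; being also everywhere defined on the Hilbert space $\cG$, the closed graph theorem then forces $M(\lambda_0)$ to be bounded and equal to $\myoverline{M(\lambda_0)}$. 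The remaining conditions (ii)--(iv) of the theorem become vacuous in this case (since $B_1=I$ is bounded everywhere defined and $\ran\Gamma_0=\cG$), while (v) of the theorem is precisely the stated condition (ii).

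The main obstacle, modest as it is, is this closed-graph argument that turns the abstract closure $\myoverline{M(\lambda_0)}$ into the unadorned operator $M(\lambda_0)$; the rest is bookkeeping matching the hypotheses of the corollary against those of Theorem~\ref{bsextendedsatz}.
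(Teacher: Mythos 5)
Your reduction is correct and is essentially the paper's own route: the corollary is precisely Theorem~\ref{bsextendedsatz} specialized to $\ran\Gamma_0=\ran\widetilde\Gamma_0=\cG$, where conditions (iii) and (iv) of that theorem become vacuous, condition (ii) loses the $\ran\Gamma_0$ constraint, and (i), (v) are unchanged. Your closed-graph step identifying $M(\lambda_0)$ with $\myoverline{M(\lambda_0)}$ in the case $B_1=I$, $B_2=B$ is sound (along your sequences $f_n=\gamma(\lambda_0)\varphi_n$ both $\Gamma_0 f_n$ and $\Gamma_1 f_n$ converge, so the closability of the joint map $(\Gamma_0,\Gamma_1)^\top$ available under (DD) already suffices), and it can be obtained even more directly from Proposition~\ref{mprop}~(ii): $M(\lambda_0)\subset\widetilde M(\myoverline{\lambda_0})^*$ is everywhere defined and admits a closed extension, hence is closed and, by the closed graph theorem, bounded, so $\myoverline{M(\lambda_0)}=M(\lambda_0)$.
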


We briefly return to the setting in Example~\ref{exi1} at the end of Section~\ref{sec2}.

\begin{example}\label{exi2}
The $\gamma$-fields and Weyl functions associated with the quasi boundary triple in Example~\ref{exi1} are given by
\begin{equation*}
\begin{split}
 \gamma(\lambda)\varphi&=f_\lambda(\varphi),\qquad \varphi\in L^2(\partial\Omega,\dC^{m\times m}),\,\,\,\,\lambda\in\rho(A_0),\\
 \widetilde\gamma(\mu)\psi&=g_\mu(\psi),\qquad \psi\in L^2(\partial\Omega,\dC^{m\times m}),\,\,\,\,\mu\in\rho(\widetilde A_0),
\end{split}
\end{equation*}
and

\begin{equation*}
\begin{split}
 M(\lambda)\varphi&=-\iota_-\tau_N f_\lambda(\varphi),\qquad \varphi\in L^2(\partial\Omega,\dC^{m\times m}),\,\,\,\,\lambda\in\rho(A_0),\\
 \widetilde M(\mu)\psi&=-\iota_-\widetilde\tau_N g_\mu(\psi),\qquad \psi\in L^2(\partial\Omega,\dC^{m\times m}),\,\,\,\,\mu\in\rho(\widetilde A_0),
\end{split}
\end{equation*}
where $f_\lambda(\varphi), g_\mu(\psi)\in H^1(\Omega,\dC^{m\times m})$ are the unique solutions of the boundary value problems
\begin{equation*}
\begin{split}
 \cP f_\lambda(\varphi)&=\lambda f_\lambda(\varphi),\quad \iota_+\tau_D f_\lambda(\varphi)=\varphi,\\
 \widetilde\cP g_\mu(\psi)&=\mu g_\mu(\psi),\quad \iota_+\tau_D g_\mu(\psi)=\psi,
\end{split}
 \end{equation*}
respectively. Note that 
\begin{equation*}
\begin{split}
-\iota_-^{-1} M(\lambda)\iota_+&: H^{1/2}(\partial\Omega,\dC^{m\times m})\rightarrow H^{-1/2}(\partial\Omega,\dC^{m\times m}),\,\,\,\,\lambda\in\rho(A_0), \\
-\iota_-^{-1} \widetilde M(\mu)\iota_+&: H^{1/2}(\partial\Omega,\dC^{m\times m})\rightarrow H^{-1/2}(\partial\Omega,\dC^{m\times m}),\,\,\,\,\mu\in\rho(\widetilde A_0),
\end{split}
\end{equation*}
are the Dirichlet-to-Neumann maps corresponding to the differential expressions $\cP-\lambda$ and $\widetilde\cP-\mu$, respectively.

It follows from Corollary~\ref{bsextendedcorchen} that if, e.g., $B$ is an everywhere defined bounded operator in $L^2(\partial\Omega,\dC^{m\times m})$ such that 
$1\in\rho(B M(\lambda_0))$ for some $\lambda_0\in\rho(A_0)$, then 
\begin{equation*}
 \begin{split}
  A_B f&= \cP f,\\ 
  \dom A_B&=\bigl\{f\in H^1(\Omega,\dC^{m\times m}): \cP f \in L^2(\Omega,\dC^{m\times m}),\, \iota_+\tau_D f+ B \iota_-\tau_N f = 0\bigr\},
  \end{split}
 \end{equation*}
is a closed operator in $L^2(\Omega,\dC^{m\times m})$ with a nonempty resolvent set; it is clear that for $\lambda\in\rho(A_B)$ and $h\in L^2(\Omega,\dC^{m\times m})$ the unique $H^1(\Omega,\dC^{m\times m})$-solution of the boundary value problem
\begin{equation*}
 (\cP-\lambda)f = h,\qquad \iota_+\tau_D f+ B \iota_-\tau_N f = 0,
\end{equation*}
is given by $f=(A_B-\lambda)^{-1}h$. Furthermore, if $\lambda\in\rho(A_0)\cap\rho(A_{B})$, then the solution can be expressed via the Krein-type resolvent formula \eqref{malwieder}.
We leave it to the reader to formulate a variant of this observation for Robin-type realizations of the adjoint differential expression $\widetilde \cP$.
\end{example}

\begin{appendix}
\section{The special case $S=\widetilde S$}\label{app}

We provide a summary of our results in the special situation that $S$ is a densely defined closed symmetric operator, that is, $\{S,S\}$ is an adjoint pair.
In this case one can choose $T=\widetilde T$ and $\Gamma_0=\widetilde\Gamma_0$, $\Gamma_1=\widetilde\Gamma_1$.
The results below are known from \cite{BL07,BL12,BLLR18,BS19} for the special case that 
$\{\cG,\Gamma_0,\Gamma_1\}$ is a quasi boundary triple, but mostly remain valid under the weaker assumptions 
{\rm ($\sG$)}, {\rm ($\sD$)}, or {\rm ($\sM$)}; cf. Definition~\ref{qbt}, which reduces to the following:

\begin{definition}\label{qbtsym}
Let $S$ be a densely defined closed symmetric operator in $\sH$ and assume that
$T$ is a core of $S^*$.
We shall consider {\em triples} of the form $\{\cG,\Gamma_0,\Gamma_1\}$ for $S$, where
$\cG$ is a Hilbert space and
\begin{equation*}
 \Gamma_0,\Gamma_1:\dom T\rightarrow\cG
\end{equation*}
are linear mappings such that
\begin{itemize}
 \item [\rm ($\sG$)] the abstract Green's identity
\begin{equation*}
 (Tf,g)_\cH-(f, Tg)_\cH=(\Gamma_1 f, \Gamma_0 g)_\cG-(\Gamma_0 f,\Gamma_1 g)_\cG
\end{equation*}
holds for all $f,g\in\dom T$,
\item [{\rm ($\sD$)}] the range of $\Gamma_0:\dom T\rightarrow\cG$ is dense,
\item [{\rm ($\sM$)}] the operator $A_0:=T\upharpoonright\ker\Gamma_0$ is self-adjoint in $\sH$.
\end{itemize}
If $\{\cG,\Gamma_0,\Gamma_1\}$ is such that {\rm ($\sG$)},
\begin{itemize}
\item [{\rm ($\sD\sD$)}] the range of $(\Gamma_0,\Gamma_1)^\top:\dom T\rightarrow\cG\times \cG$ is dense,
\end{itemize}
and {\rm ($\sM$)} hold, then $\{\cG,\Gamma_0,\Gamma_1\}$ is said to be a {\em quasi boundary triple} for $S$.
\end{definition}

Note that in the present situation the natural counterpart of the maximality condition (M) in Definition~\ref{qbt}
is the requirement in ($\sM$) that $A_0=T\upharpoonright\ker\Gamma_0$ is self-adjoint in $\sH$. 
We also mention that $A_0$ is symmetric whenever ($\sG$) holds; cf. Remark~\ref{remmilein}.

\begin{lemma}\label{gbtlem}
Assume that the triple $\{\cG,\Gamma_0,\Gamma_1\}$ satisfies {\rm ($\sG$)} and {\rm ($\sM$)}.
If $\ran\Gamma_0=\cG$, then $\ran(\Gamma_0,\Gamma_1)^\top$ is dense in $\cG\times \cG$ and $\{\cG,\Gamma_0,\Gamma_1\}$  is a quasi boundary triple for $S$.
\end{lemma}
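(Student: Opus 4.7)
The plan is to mimic the proof of Lemma~\ref{ddlem} in the symmetric setting, where the simplification $\widetilde\Gamma_i=\Gamma_i$ and $\widetilde A_0=A_0$ means self-adjointness of $A_0$ replaces the mutual-adjoint condition in (M). Once density of $\ran(\Gamma_0,\Gamma_1)^\top$ is established, condition ($\sD\sD$) holds and the quasi boundary triple property follows directly from Definition~\ref{qbtsym} since ($\sG$) and ($\sM$) are assumed.

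To prove density I would take an arbitrary pair $(\varphi,\varphi')^\top\in\cG\times\cG$ orthogonal to $\ran(\Gamma_0,\Gamma_1)^\top$, so that
\begin{equation*}
(\Gamma_0 f,\varphi)+(\Gamma_1 f,\varphi')=0\qquad\text{for all }f\in\dom T,
\end{equation*}
and use the surjectivity assumption $\ran\Gamma_0=\cG$ to pick $g\in\dom T$ with $\Gamma_0 g=\varphi'$. Substituting $\varphi'=\Gamma_0 g$ into the orthogonality relation and applying Green's identity ($\sG$) would then give
\begin{equation*}
(Tf,g)-(f,Tg)=(\Gamma_1 f,\Gamma_0 g)-(\Gamma_0 f,\Gamma_1 g)=-(\Gamma_0 f,\varphi+\Gamma_1 g)
\end{equation*}
for all $f\in\dom T$. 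Restricting $f$ to $\ker\Gamma_0=\dom A_0$ kills the right-hand side, so $g\in\dom A_0^*$.

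Here is where I invoke ($\sM$): self-adjointness of $A_0$ forces $\dom A_0^*=\dom A_0=\ker\Gamma_0$, hence $\varphi'=\Gamma_0 g=0$. Feeding this back into the original orthogonality relation reduces it to $(\Gamma_0 f,\varphi)=0$ for all $f\in\dom T$, and since $\ran\Gamma_0=\cG$ (in particular dense) we conclude $\varphi=0$. Therefore the only element orthogonal to $\ran(\Gamma_0,\Gamma_1)^\top$ is zero, giving density in $\cG\times\cG$, i.e.\ condition ($\sD\sD$). Combined with the assumed ($\sG$) and ($\sM$), this is exactly the definition of a quasi boundary triple for $S$.

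There is no serious obstacle: the structure of the argument is literally item (i) of Lemma~\ref{ddlem} specialized to $\widetilde\Gamma_0=\Gamma_0$ and $\widetilde A_0=A_0$, with the slight conceptual point being that self-adjointness of $A_0$ in the symmetric setting plays exactly the role of the mutual-adjoint relations $A_0^*=\widetilde A_0$, $\widetilde A_0^*=A_0$ from the general framework. The only thing to be slightly careful about is that surjectivity (not merely density) of $\Gamma_0$ is used to ensure the existence of $g$ with $\Gamma_0 g=\varphi'$ for the arbitrary $\varphi'\in\cG$ appearing in the argument.
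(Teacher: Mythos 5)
Your argument is correct and is exactly the paper's route: the appendix lemma is obtained by specializing Lemma~\ref{ddlem}(i) to $\widetilde\Gamma_i=\Gamma_i$, $\widetilde T=T$, with self-adjointness of $A_0$ playing the role of the mutual-adjointness in (M), and you reproduce that specialization faithfully, including the correct use of surjectivity of $\Gamma_0$ to choose $g$ and of density to conclude $\varphi=0$. Nothing is missing.
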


The statement in Lemma~\ref{gbtlem} is known from
\cite[Lemma 6.1]{DM95};
in this situation the quasi boundary triple $\{\cG,\Gamma_0,\Gamma_1\}$ is even a so-called 
generalized boundary triple in the sense of \cite[Section~6]{DM95},
see also \cite{DHMS06,DHMS12}.

\begin{lemma}
Assume that the triple $\{\cG,\Gamma_0,\Gamma_1\}$ satisfies {\rm ($\sG$)}, {\rm ($\sD$)}, and {\rm ($\sM$)}.
 Then
 \begin{equation*}
  \dom S=\ker\Gamma_0\cap\ker\Gamma_1.
 \end{equation*}
\end{lemma}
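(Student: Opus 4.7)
The plan is to mimic the argument of Lemma~\ref{lemmakers}, specialized to $S=\widetilde S$, $T=\widetilde T$, and $\Gamma_i=\widetilde\Gamma_i$. Since $T$ is a core of $S^*$, we have $\myoverline T=S^*$, equivalently $T^*=S^{**}=S$. Thus $\dom S=\dom T^*$, and the task reduces to identifying $\dom T^*$ with $\ker\Gamma_0\cap\ker\Gamma_1$.

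For the inclusion ``$\subseteq$'', I would fix $g\in\dom T^*=\dom S$. The inclusion $A_0\subset T$ gives $T^*\subset A_0^*$, and self-adjointness of $A_0$ from condition $(\sM)$ collapses this to $T^*\subset A_0$. Hence $g\in\dom A_0=\ker\Gamma_0\subset\dom T$, giving $\Gamma_0g=0$. Applying Green's identity $(\sG)$ to this $g$ and arbitrary $f\in\dom T$ then yields
\begin{equation*}
0=(Tf,g)-(f,Tg)=(\Gamma_1 f,\Gamma_0 g)-(\Gamma_0 f,\Gamma_1 g)=-(\Gamma_0 f,\Gamma_1 g),
\end{equation*}
and the density assumption $(\sD)$ on $\ran\Gamma_0$ forces $\Gamma_1 g=0$. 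Therefore $g\in\ker\Gamma_0\cap\ker\Gamma_1$.

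For the reverse inclusion ``$\supseteq$'', take $g\in\ker\Gamma_0\cap\ker\Gamma_1$. Then both boundary terms on the right-hand side of $(\sG)$ vanish, so $(Tf,g)=(f,Tg)$ for every $f\in\dom T$. This is precisely the defining identity of $\dom T^*$, so $g\in\dom T^*=\dom S$ with $T^*g=Tg$.

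There is no real obstacle: the whole argument runs through once one notices that ``$g\in\dom S\Rightarrow\Gamma_0 g=0$'' is obtained for free from self-adjointness of $A_0$ via $T^*\subset A_0^*=A_0$; after that, Green's identity plus the density of $\ran\Gamma_0$ does the rest. The only small point worth mentioning is that the weaker density condition $(\sD)$, rather than the stronger $(\sD\sD)$, is indeed sufficient, because the argument only tests $\Gamma_1 g$ against elements of $\ran\Gamma_0$.
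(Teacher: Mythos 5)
Your argument is correct and is exactly the specialization of the paper's proof of Lemma~\ref{lemmakers} to the symmetric case $S=\widetilde S$, $T=\widetilde T$, $\Gamma_i=\widetilde\Gamma_i$, with the maximality condition now reading $A_0=A_0^*$, which is how the paper intends the appendix statement to be justified. In particular your observation that only the weaker condition ($\sD$) is needed matches the paper's use of the density of $\ran\Gamma_0$ alone.
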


\begin{lemma}
 Assume that the triple
 $\{\cG,\Gamma_0,\Gamma_1\}$ satisfies {\rm ($\sG$)} and {\rm ($\sD\sD$)}. Then the mapping
 \begin{equation*}
  \begin{pmatrix} \Gamma_0\\ \Gamma_1\end{pmatrix}:\dom T\rightarrow\cG\times \cG
 \end{equation*}
is closable with respect to the graph norm of $T$. In particular, the individual mappings $\Gamma_0,\Gamma_1:\dom T\rightarrow\cG$ 
are closable.
\end{lemma}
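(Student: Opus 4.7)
The plan is to mirror the argument already given in the adjoint pair setting (the lemma on closability of $(\Gamma_0,\Gamma_1)^\top$ and $(\widetilde\Gamma_0,\widetilde\Gamma_1)^\top$), specialized to the symmetric situation $S=\widetilde S$, $T=\widetilde T$, $\Gamma_j=\widetilde\Gamma_j$. The idea is a standard closability argument driven by Green's identity together with the density of the joint range.

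First I would take a sequence $f_n\in\dom T$ converging to $0$ in the graph norm of $T$, i.e.\ $f_n\to 0$ and $Tf_n\to 0$ in $\sH$, and assume that $\Gamma_0 f_n\to\varphi$ and $\Gamma_1 f_n\to\varphi'$ in $\cG$ for some $\varphi,\varphi'\in\cG$. The goal is to show $\varphi=\varphi'=0$. For this, I would pick an arbitrary $g\in\dom T$ and apply the abstract Green's identity ($\sG$):
\begin{equation*}
(Tf_n,g)-(f_n,Tg)=(\Gamma_1 f_n,\Gamma_0 g)-(\Gamma_0 f_n,\Gamma_1 g).
\end{equation*}
The left-hand side tends to $0$ as $n\to\infty$ since $f_n\to 0$ and $Tf_n\to 0$, while the right-hand side converges to $(\varphi',\Gamma_0 g)-(\varphi,\Gamma_1 g)$. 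Hence
\begin{equation*}
0=(\varphi',\Gamma_0 g)-(\varphi,\Gamma_1 g)=\left(\begin{pmatrix}-\varphi\\ \varphi'\end{pmatrix},\begin{pmatrix}\Gamma_1 g\\ \Gamma_0 g\end{pmatrix}\right)_{\cG\times\cG}
\end{equation*}
for every $g\in\dom T$. After swapping components (which does not affect density), condition ($\sD\sD$) says that $\{(\Gamma_0 g,\Gamma_1 g)^\top : g\in\dom T\}$ is dense in $\cG\times\cG$, and hence so is the flipped family $\{(\Gamma_1 g,\Gamma_0 g)^\top\}$. Therefore the vector $(-\varphi,\varphi')^\top$ is orthogonal to a dense set, which forces $\varphi=\varphi'=0$. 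This proves closability of $(\Gamma_0,\Gamma_1)^\top$ with respect to the graph norm of $T$.

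For the last sentence, closability of each coordinate follows immediately: if $f_n\to 0$, $Tf_n\to 0$, and $\Gamma_0 f_n\to\varphi$, then setting $\varphi'$ as the corresponding limit of $\Gamma_1 f_n$ along a subsequence would be needed; but it is cleaner to just repeat the above computation, noting that it actually only uses the convergence of $\Gamma_0 f_n$ (resp.\ $\Gamma_1 f_n$) together with Green's identity evaluated on $g\in\ker\Gamma_1$ (resp.\ $\ker\Gamma_0$) to get an orthogonality that only involves $\varphi$ (resp.\ $\varphi'$). Since there is no genuine obstacle here---the argument is a direct transcription of the adjoint-pair version already in the paper---I would simply state this as an immediate consequence, without separate computation. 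The only mild point to watch is getting the sign/index bookkeeping in ($\sG$) right when identifying the orthogonal complement, which is why I displayed the pairing explicitly above.
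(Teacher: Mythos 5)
Your argument is correct and is essentially the paper's own proof (the closability lemma for adjoint pairs in Section~2) specialized to $S=\widetilde S$, $T=\widetilde T$, $\Gamma_j=\widetilde\Gamma_j$: test Green's identity {\rm ($\sG$)} against arbitrary $g\in\dom T$, pass to the limit, and use the density of $\ran(\Gamma_0,\Gamma_1)^\top$ from {\rm ($\sD\sD$)} to conclude $\varphi=\varphi'=0$. The only questionable point is your parenthetical alternative for the individual maps: testing with $g\in\ker\Gamma_0$ (resp.\ $g\in\ker\Gamma_1$) would require density of $\ran(\Gamma_1\upharpoonright\ker\Gamma_0)$ (resp.\ $\ran(\Gamma_0\upharpoonright\ker\Gamma_1)$), which is not among the hypotheses, so your final decision to simply record the closability of $\Gamma_0,\Gamma_1$ as a consequence of the joint statement, exactly as the paper does, is the right call.
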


Proposition~\ref{ordiprop} is known for quasi boundary triples from \cite[Theorem 2.3]{BL07}, where also a variant of Theorem~\ref{ratethm} is contained in the symmetric setting. 
Here we only state the symmetric version of Proposition~\ref{ordiprop}, which in fact leads to an ordinary boundary triple; cf. \cite{BHS20,DM91}

\begin{proposition}
 Let 
 $\{\cG,\Gamma_0,\Gamma_1\}$ be a quasi boundary triple for $S$. Then
 the following are equivalent:
 \begin{itemize}
  \item [{\rm (i)}] $T=S^*$, 
  \item [{\rm (ii)}] $\ran(\Gamma_0,\Gamma_1)^\top=\cG\times\cG$.
 \end{itemize}
\end{proposition}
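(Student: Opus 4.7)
The plan is to deduce this statement directly from Proposition~\ref{ordiprop}, which is its analogue for general adjoint pairs. The key observation is that a quasi boundary triple $\{\cG,\Gamma_0,\Gamma_1\}$ for the symmetric operator $S$ in the sense of Definition~\ref{qbtsym} canonically induces a quasi boundary triple
\begin{equation*}
\{\cG,(\Gamma_0,\Gamma_1),(\widetilde\Gamma_0,\widetilde\Gamma_1)\}\quad\text{for the adjoint pair }\{S,\widetilde S\}:=\{S,S\}
\end{equation*}
by declaring $\widetilde T:=T$, $\widetilde \Gamma_0:=\Gamma_0$, and $\widetilde \Gamma_1:=\Gamma_1$. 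Under this identification, condition ($\sG$) is exactly (G), condition ($\sD\sD$) is exactly (DD), the requirement that $T$ is a core for $S^*$ doubles as ``$\widetilde T$ is a core for $\widetilde S^*$'', and the self-adjointness condition ($\sM$) on $A_0=T\upharpoonright\ker\Gamma_0$ is exactly (M), since $\widetilde A_0=A_0$ and thus $A_0^*=\widetilde A_0$ and $\widetilde A_0^*=A_0$ both reduce to $A_0^*=A_0$.

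Having made this translation, I would simply invoke Proposition~\ref{ordiprop}, which for the pair $\{S,S\}$ asserts the equivalence of the two conditions
\begin{equation*}
\text{(i$'$)}\quad T=S^*\text{ and }\widetilde T=\widetilde S^*,\qquad \text{(ii$'$)}\quad \ran(\Gamma_0,\Gamma_1)^\top=\cG\times\cG\text{ and }\ran(\widetilde\Gamma_0,\widetilde\Gamma_1)^\top=\cG\times\cG.
\end{equation*}
Since the tilded and untilded objects coincide, (i$'$) collapses to $T=S^*$ and (ii$'$) collapses to $\ran(\Gamma_0,\Gamma_1)^\top=\cG\times\cG$, which are precisely (i) and (ii) in the statement to be proved. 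Hence the equivalence follows.

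There is essentially no obstacle here, since the reduction to the adjoint pair setting is tautological; the substance of the proof is carried out already in the argument for Proposition~\ref{ordiprop}. The only point worth verifying by inspection is that both implications in the proof of Proposition~\ref{ordiprop} remain meaningful when $\widetilde T=T$ and $\widetilde\Gamma_i=\Gamma_i$, which is clear because those arguments only exploit (G), (DD), (M), the closedness of $T$ (respectively of $S^*$), and the density/surjectivity hypotheses — all of which are preserved under the identification. If one preferred an independent proof, one could mirror the two weak-compactness arguments of Proposition~\ref{ordiprop} verbatim in the symmetric setting, but this would merely duplicate the work already done.
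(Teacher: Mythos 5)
Your reduction is correct and is exactly how the paper treats the appendix: there the symmetric case is obtained by reading $\{\cG,\Gamma_0,\Gamma_1\}$ as a triple for the adjoint pair $\{S,S\}$ with $\widetilde T=T$ and $\widetilde\Gamma_i=\Gamma_i$, so the statement is the specialization of Proposition~\ref{ordiprop} (the paper additionally notes it is known from \cite[Theorem 2.3]{BL07}). Your check that ($\sG$), ($\sD\sD$), ($\sM$) translate into (G), (DD), (M) under this identification is the only point that needs verifying, and you did it.
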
 

The $\gamma$-field and Weyl function corresponding to a triple $\{\cG,\Gamma_0,\Gamma_1\}$ are introduced in the same way as in Section~\ref{gamsec} using the 
decomposition
\begin{equation*}
 \dom T=\dom A_0\dot + \ker(T-\lambda)=\ker\Gamma_0\dot+\ker(T-\lambda)
\end{equation*}
for $\lambda\in\rho(A_0)$ as follows:
\begin{equation*}
 \gamma(\lambda)=\bigl(\Gamma_0\upharpoonright\ker(T-\lambda)\bigr)^{-1}\quad\text{and}\quad 
 M(\lambda)=\Gamma_1\bigl(\Gamma_0\upharpoonright\ker(T-\lambda)\bigr)^{-1}.
\end{equation*}

In the symmetric situation Proposition~\ref{gamprop} and Proposition~\ref{mprop} reduce to the following statements.

\begin{proposition}
Assume that the triple
 $\{\cG,\Gamma_0,\Gamma_1\}$ satisfies {\rm ($\sG$)}, {\rm ($\sD$)}, {\rm ($\sM$)}, and let 
 $\gamma$  be the corresponding $\gamma$-field.
Then the following assertions hold for all $\lambda\in\rho(A_0)$.
\begin{itemize}
 \item [{\rm (i)}] $\gamma(\lambda)$ is a bounded operator from $\cG$ into $\sH$ with dense domain
 $\dom\gamma(\lambda)=\ran\Gamma_0$ and $\ran\gamma(\lambda)=\ker(T-\lambda)$;
 \item [{\rm (ii)}] for $\varphi\in\ran\Gamma_0$ the function $\lambda\mapsto\gamma(\lambda)\varphi$ is holomorphic 
 on $\rho(A_0)$ and 
 \begin{equation*}
   \gamma(\lambda)=\bigl(I+(\lambda-\nu)(A_0-\lambda)^{-1}\bigr)\gamma(\nu),\qquad \lambda,\nu\in\rho(A_0),
\end{equation*}
holds;
 \item [{\rm (iii)}] $\gamma(\lambda)^*$ is an everywhere defined bounded operator from $\sH$ to $\cG$
 and for all $f\in\sH$ one has 
 \begin{equation*}
  \gamma(\lambda)^*f=\Gamma_1(A_0-\myoverline\lambda)^{-1}f,
 \end{equation*}
 in particular, $\ran\gamma(\lambda)^*\subset\ran\Gamma_1$.
\end{itemize}
\end{proposition}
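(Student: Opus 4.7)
The plan is to specialize the proof of Proposition~\ref{gamprop} to the symmetric setting where $\widetilde S=S$, $\widetilde T=T$, $\widetilde\Gamma_0=\Gamma_0$, $\widetilde\Gamma_1=\Gamma_1$, so that $\widetilde\gamma=\gamma$ and $\widetilde A_0=A_0$. The logical order is to prove (iii) first, deduce (i) from (iii) via the standard adjoint argument, and finally derive (ii) from the resolvent-type identity satisfied by $\gamma(\lambda)^*$.

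For (iii), I would fix $\lambda\in\rho(A_0)$, take $\varphi\in\ran\Gamma_0=\dom\gamma(\lambda)$ and $f\in\sH$, and exploit that $\bar\lambda\in\rho(A_0)$ (which follows from the self-adjointness of $A_0$ in ($\sM$)) to pick the unique $g\in\dom A_0=\ker\Gamma_0$ with $(A_0-\bar\lambda)g=f$. Since $\gamma(\lambda)\varphi\in\ker(T-\lambda)\subset\dom T$ with $\Gamma_0\gamma(\lambda)\varphi=\varphi$ and $g\in\ker\Gamma_0\cap\dom T$, the Green identity ($\sG$) gives
\begin{equation*}
(\gamma(\lambda)\varphi,f)=(\gamma(\lambda)\varphi,(A_0-\bar\lambda)g)=(T\gamma(\lambda)\varphi,g)-(\gamma(\lambda)\varphi,Tg)=-(\varphi,\Gamma_1 g),
\end{equation*}
because $\Gamma_0 g=0$. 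Thus $(\gamma(\lambda)\varphi,f)=(\varphi,\Gamma_1(A_0-\bar\lambda)^{-1}f)$ for all $f\in\sH$ and all $\varphi$ in the dense set $\ran\Gamma_0$ (density by ($\sD$)); this places $f$ in $\dom\gamma(\lambda)^*$ and forces the identity $\gamma(\lambda)^*f=\Gamma_1(A_0-\bar\lambda)^{-1}f$. The sign issue here is the only subtle point: the calculation must be carried out carefully because in the symmetric reduction the term $-(\Gamma_0 f,\Gamma_1 g)$ in ($\sG$) provides the correct sign once one also accounts for the complex conjugate in the adjoint.

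For (i), since $\gamma(\lambda)^*$ is everywhere defined (by the preceding paragraph) and automatically closed, the closed graph theorem yields its boundedness. Taking a second adjoint gives $\gamma(\lambda)^{**}=\myoverline{\gamma(\lambda)}$ as a bounded everywhere defined operator from $\cG$ to $\sH$, and hence its restriction $\gamma(\lambda)$ to $\dom\gamma(\lambda)=\ran\Gamma_0$ is bounded. The equalities $\dom\gamma(\lambda)=\ran\Gamma_0$ and $\ran\gamma(\lambda)=\ker(T-\lambda)$ follow directly from the definition as the inverse of $\Gamma_0\upharpoonright\ker(T-\lambda)$, which is injective because $\ker(T-\lambda)\cap\ker\Gamma_0=\ker(A_0-\lambda)=\{0\}$ for $\lambda\in\rho(A_0)$.

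For (ii), starting from the explicit formula in (iii) and the first resolvent identity for $A_0$, one computes
\begin{equation*}
\gamma(\lambda)^*-\gamma(\nu)^*=\Gamma_1\bigl((A_0-\bar\lambda)^{-1}-(A_0-\bar\nu)^{-1}\bigr)=(\bar\lambda-\bar\nu)\gamma(\nu)^*(A_0-\bar\lambda)^{-1},
\end{equation*}
and taking Hilbert space adjoints (using self-adjointness of $A_0$) yields
\begin{equation*}
\myoverline{\gamma(\lambda)}-\myoverline{\gamma(\nu)}=(\lambda-\nu)(A_0-\lambda)^{-1}\myoverline{\gamma(\nu)},
\end{equation*}
which, upon restriction to $\ran\Gamma_0$, gives the claimed identity $\gamma(\lambda)=(I+(\lambda-\nu)(A_0-\lambda)^{-1})\gamma(\nu)$. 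Holomorphy of $\lambda\mapsto\gamma(\lambda)\varphi$ on $\rho(A_0)$ is then immediate from holomorphy of the resolvent of $A_0$. The only point requiring slight care is that our density assumption is the weaker ($\sD$) rather than ($\sDD$); however, the argument above only ever needs density of $\ran\Gamma_0$ (to pass from the sesquilinear identity to equality of operators in (iii)), so ($\sD$) suffices and no appeal to ($\sDD$) is required.
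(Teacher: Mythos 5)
Your proposal follows the paper's own proof of Proposition~\ref{gamprop} essentially verbatim, specialized to the symmetric case: (iii) first via Green's identity ($\sG$) with $g=(A_0-\myoverline\lambda)^{-1}f\in\ker\Gamma_0$ and density of $\ran\Gamma_0$ from ($\sD$), then (i) by the double-adjoint/boundedness argument, then (ii) from the resolvent identity for $\gamma(\lambda)^*$ followed by taking adjoints, so the approach and conclusions are correct. One small correction in your displayed chain for (iii): since $(\gamma(\lambda)\varphi,(A_0-\myoverline\lambda)g)=(\gamma(\lambda)\varphi,Tg)-(T\gamma(\lambda)\varphi,g)=-\bigl((T\gamma(\lambda)\varphi,g)-(\gamma(\lambda)\varphi,Tg)\bigr)$, the chain should end at $+(\varphi,\Gamma_1 g)$ rather than $-(\varphi,\Gamma_1 g)$, which is exactly what makes it consistent with the identity $\gamma(\lambda)^*f=\Gamma_1(A_0-\myoverline\lambda)^{-1}f$ that you then (correctly) state.
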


\begin{proposition}
Assume that the triple
 $\{\cG,\Gamma_0,\Gamma_1\}$ satisfies {\rm ($\sG$)}, {\rm ($\sD$)}, {\rm ($\sM$)}, and let $\gamma$ and $M$ be the corresponding $\gamma$-field and Weyl function, respectively.
Then the following assertions hold for all $\lambda,\mu\in\rho(A_0)$.
\begin{itemize}
 \item [{\rm (i)}] $M(\lambda)$ is an operator in $\cG$ with dense domain
 $\dom M(\lambda)=\ran\Gamma_0$ and $\ran M(\lambda)\subset\ran \Gamma_1$;
\item [{\rm (ii)}] $M(\lambda)\subset M(\myoverline\lambda)^*$ and one has the identities
\begin{equation*}
 M(\lambda)- M(\mu)^*=(\lambda-\myoverline\mu)\gamma(\mu)^*\gamma(\lambda)
\end{equation*}
\item [{\rm (iii)}]
The function $\lambda\mapsto M(\lambda)$ is holomorphic in the sense that it can be
written as the sum of the possibly unbounded closed operator $M(\lambda_0)^*$,
where $\lambda_0\in\rho(A_0)$ is fixed, and a bounded holomorphic operator
function:
\begin{equation*}
 M(\lambda)= M(\lambda_0)^*+\gamma(\lambda_0)^*(\lambda-\myoverline\lambda_0)\bigl(I+(\lambda-\lambda_0)(A_0-\lambda)^{-1}\bigr)\gamma(\lambda_0).
\end{equation*}
\end{itemize}
\end{proposition}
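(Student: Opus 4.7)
The plan is to derive this proposition as a direct specialization of Proposition~\ref{mprop} to the symmetric case $S=\widetilde S$, $T=\widetilde T$, $\Gamma_i=\widetilde\Gamma_i$. Under this identification one has $\widetilde M=M$ and $\widetilde\gamma=\gamma$, and conditions {\rm ($\sG$)}, {\rm ($\sD$)}, {\rm ($\sM$)} become precisely the hypotheses (G), (D), (M) needed for Proposition~\ref{mprop}: the self-adjointness of $A_0$ demanded by {\rm ($\sM$)} encodes both $A_0^*=\widetilde A_0=A_0$ and $\widetilde A_0^*=A_0$. Thus (i) follows directly from Proposition~\ref{mprop}(i); the inclusion $M(\lambda)\subset M(\myoverline\lambda)^*$ and the identity in (ii) are exactly \eqref{yes3} after inserting $\widetilde M=M$, $\widetilde\gamma=\gamma$; and the holomorphy formula in (iii) is likewise obtained by this substitution in Proposition~\ref{mprop}(iii).

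If a self-contained derivation is preferred, I would redo the short proof of Proposition~\ref{mprop} in the symmetric setting as follows. For (i), the identities $\dom M(\lambda)=\dom\gamma(\lambda)=\ran\Gamma_0$ and $\ran M(\lambda)=\Gamma_1\ran\gamma(\lambda)\subset\ran\Gamma_1$ are immediate from the definitions. For (ii), fix $\varphi_\lambda,\varphi_\mu\in\ran\Gamma_0$ and pick $f_\lambda=\gamma(\lambda)\varphi_\lambda\in\ker(T-\lambda)$ and $f_\mu=\gamma(\mu)\varphi_\mu\in\ker(T-\mu)$, so that $\Gamma_0 f_\lambda=\varphi_\lambda$ and $\Gamma_1 f_\lambda=M(\lambda)\varphi_\lambda$ (and analogously for $f_\mu$). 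Applying {\rm ($\sG$)} gives
\begin{equation*}
(M(\lambda)\varphi_\lambda,\varphi_\mu)-(\varphi_\lambda,M(\mu)\varphi_\mu)=(Tf_\lambda,f_\mu)-(f_\lambda,Tf_\mu)=(\lambda-\myoverline\mu)(\gamma(\lambda)\varphi_\lambda,\gamma(\mu)\varphi_\mu),
\end{equation*}
which by the density of $\ran\Gamma_0$ from {\rm ($\sD$)} yields both the identity $M(\lambda)-M(\mu)^*=(\lambda-\myoverline\mu)\gamma(\mu)^*\gamma(\lambda)$ and, upon setting $\mu=\myoverline\lambda$, the inclusion $M(\lambda)\subset M(\myoverline\lambda)^*$.

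For (iii), I would substitute the $\gamma$-field transport formula $\gamma(\lambda)=\bigl(I+(\lambda-\lambda_0)(A_0-\lambda)^{-1}\bigr)\gamma(\lambda_0)$ (from the symmetric analogue of Proposition~\ref{gamprop}(ii)) into the identity from (ii) with $\mu=\lambda_0$:
\begin{equation*}
M(\lambda)-M(\lambda_0)^*=(\lambda-\myoverline\lambda_0)\gamma(\lambda_0)^*\gamma(\lambda)=\gamma(\lambda_0)^*(\lambda-\myoverline\lambda_0)\bigl(I+(\lambda-\lambda_0)(A_0-\lambda)^{-1}\bigr)\gamma(\lambda_0),
\end{equation*}
which is exactly the claimed decomposition. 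The boundedness and holomorphy of the second summand follow because $\gamma(\lambda_0)$ and $\gamma(\lambda_0)^*$ are bounded and $\lambda\mapsto(A_0-\lambda)^{-1}$ is holomorphic on $\rho(A_0)$.

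There is no real obstacle: the result is structurally an immediate consequence of the general adjoint-pair Proposition~\ref{mprop}, and the only point requiring attention is verifying that conditions {\rm ($\sG$)}, {\rm ($\sD$)}, {\rm ($\sM$)} correctly specialize (G), (D), (M) — in particular that self-adjointness of $A_0$ already gives both halves of \eqref{adjointi}. The rest is a substitution $\widetilde M\leadsto M$, $\widetilde\gamma\leadsto\gamma$.
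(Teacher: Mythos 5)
Your proposal is correct and matches the paper's own treatment: the appendix proposition is stated there precisely as the specialization of Proposition~\ref{mprop} to $S=\widetilde S$, $T=\widetilde T$, $\Gamma_i=\widetilde\Gamma_i$ (so $\widetilde\gamma=\gamma$, $\widetilde M=M$, and ($\sM$) is condition (M) since $\widetilde A_0=A_0$), and your self-contained computation for (ii)--(iii) is exactly the argument \eqref{doitplease} combined with Proposition~\ref{gamprop}(ii) used in the paper's proof of Proposition~\ref{mprop}.
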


Next we state the abstract Birman-Schwinger principle in Theorem~\ref{bsthm} in a symmetrized form for the operator 
\begin{equation}\label{abs12122}
  A_{B_1B_2} f= Tf,\qquad \dom A_{B_1B_2}=\bigl\{f\in\dom T: B_1B_2\Gamma_1 f = \Gamma_0 f\bigr\}
 \end{equation}
where $ B_1B_2$ is a (product of) linear operators in $\cG$. The special case $B_2=B$, $B_1=I$ is left to the reader.

\begin{theorem}
Consider a triple
 $\{\cG,\Gamma_0,\Gamma_1\}$ as in Definition~\ref{qbtsym}, assume that 
 the resolvent set of  $A_0$ is nonempty, and let  $M$  be the associated Weyl function.
 Furthermore, let $A_{B_1 B_2}$ be the operator in \eqref{abs12122} and let $\lambda\in\rho(A_0)$. Then
 $\lambda\in\sigma_p(A_{B_1 B_2})$ if and only if $\ker(I-B_2M(\lambda)B_1)\not=\{0\}$, and in this case 
 \begin{equation*}
  \ker(A_{B_1 B_2}-\lambda)=\bigl\{f_\lambda\in\ker(T-\lambda):\Gamma_0 f_\lambda= B_1\varphi, 
  \varphi\in \ker(I-B_2M(\lambda)B_1)\bigr\}.
 \end{equation*}

\end{theorem}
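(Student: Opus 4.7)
The plan is to specialize the proof of Theorem~\ref{bsthm}(i) to the symmetric setting in which $\widetilde T=T$, $\widetilde\Gamma_i=\Gamma_i$, and $\widetilde M=M$. The argument given there never actually uses the distinction between the tilded and untilded objects in a structural way, so the same two-implication proof transfers verbatim; the only things to check at each step are that the domain/range conditions on $B_1$, $B_2$ and $M(\lambda)$ are respected.

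For the forward direction, suppose $\lambda\in\sigma_p(A_{B_1B_2})$ with nonzero eigenfunction $f_\lambda$. Since $A_{B_1B_2}\subset T$ we have $f_\lambda\in\ker(T-\lambda)$. The crucial first observation is that $\Gamma_0 f_\lambda\neq 0$, for otherwise $f_\lambda\in\ker\Gamma_0\cap\ker(T-\lambda)=\ker(A_0-\lambda)=\{0\}$ by assumption $\lambda\in\rho(A_0)$. The boundary condition $\Gamma_0 f_\lambda=B_1B_2\Gamma_1 f_\lambda$ then forces $B_2\Gamma_1 f_\lambda\neq 0$, and also $B_1B_2\Gamma_1 f_\lambda=\Gamma_0 f_\lambda\in\ran\Gamma_0=\dom M(\lambda)$. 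Applying the Weyl function identity $\Gamma_1 f_\lambda=M(\lambda)\Gamma_0 f_\lambda=M(\lambda)B_1B_2\Gamma_1 f_\lambda$ and then $B_2$, I obtain
\[
0=B_2\Gamma_1 f_\lambda-B_2M(\lambda)B_1\bigl(B_2\Gamma_1 f_\lambda\bigr)=\bigl(I-B_2M(\lambda)B_1\bigr)B_2\Gamma_1 f_\lambda,
\]
so $B_2\Gamma_1 f_\lambda$ is a nonzero element of $\ker(I-B_2M(\lambda)B_1)$.

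For the converse, fix $\varphi\neq 0$ in $\ker(I-B_2M(\lambda)B_1)$. The relation $\varphi=B_2M(\lambda)B_1\varphi$ shows that $B_1\varphi\in\dom M(\lambda)=\ran\Gamma_0$, that $M(\lambda)B_1\varphi\in\dom B_2$, and that the key identity
\[
B_1\varphi=B_1B_2M(\lambda)B_1\varphi
\]
holds. Now choose $f_\lambda\in\ker(T-\lambda)$ with $\Gamma_0 f_\lambda=B_1\varphi$ (possible by definition of $\gamma(\lambda)$, which is the inverse of $\Gamma_0\upharpoonright\ker(T-\lambda)$ on $\ran\Gamma_0$). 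Then $\Gamma_1 f_\lambda=M(\lambda)\Gamma_0 f_\lambda=M(\lambda)B_1\varphi\in\dom B_2$, so $B_2\Gamma_1 f_\lambda=B_2M(\lambda)B_1\varphi\in\dom B_1$, and therefore
\[
B_1B_2\Gamma_1 f_\lambda=B_1B_2M(\lambda)B_1\varphi=B_1\varphi=\Gamma_0 f_\lambda.
\]
Hence $f_\lambda\in\dom A_{B_1B_2}$ is an eigenfunction at $\lambda$, and moreover $\Gamma_0 f_\lambda=B_1\varphi$ exhibits $f_\lambda$ in the form asserted in the kernel description. Collecting all such $f_\lambda$ as $\varphi$ runs over $\ker(I-B_2M(\lambda)B_1)$, together with the forward direction (which shows every eigenfunction arises this way via $\varphi=B_2\Gamma_1 f_\lambda$ or, equivalently, with $\Gamma_0 f_\lambda=B_1\varphi$), yields the stated identity for $\ker(A_{B_1B_2}-\lambda)$.

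There is no real obstacle: the only point that might seem delicate is ensuring the various compositions $M(\lambda)B_1\varphi$, $B_2M(\lambda)B_1\varphi$, and $B_1B_2M(\lambda)B_1\varphi$ are meaningful, but each of these is forced by the membership $\varphi\in\ker(I-B_2M(\lambda)B_1)$ itself together with the definition $\dom M(\lambda)=\ran\Gamma_0$ from Proposition~\ref{mprop}(i). Conditions (D) and (M) of the triple are not needed for this statement beyond what is already built into the definition of $A_0$, $M(\lambda)$, and $\gamma(\lambda)$; in particular, closability of $B_1$ or $B_2$ is irrelevant here, so the result holds under the minimal hypotheses ($\sG$) alone plus $\rho(A_0)\neq\emptyset$.
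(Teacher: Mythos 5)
Your argument is correct and is essentially the paper's own proof of Theorem~\ref{bsthm}~(i), specialized to the symmetric case $\widetilde T=T$, $\widetilde\Gamma_0=\Gamma_0$, $\widetilde\Gamma_1=\Gamma_1$. The only micro-detail to add in the converse direction is that $B_1\varphi\neq 0$ (indeed, $B_1\varphi=0$ would give $\varphi=B_2M(\lambda)B_1\varphi=0$), so the chosen $f_\lambda\in\ker(T-\lambda)$ with $\Gamma_0 f_\lambda=B_1\varphi$ is nonzero and hence genuinely an eigenfunction.
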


Theorem~\ref{kreinthm} has the following form in the symmetric case.

\begin{theorem}
Assume that the triple
 $\{\cG,\Gamma_0,\Gamma_1\}$ satisfies {\rm ($\sG$)}, {\rm ($\sD$)}, {\rm ($\sM$)}, and let
$\gamma$ and $M$ be the associated $\gamma$-field and Weyl function, respectively.
Furthermore, let $A_{B_1 B_2}$ be the operator in \eqref{abs12122} and let $\lambda\in\rho(A_0)$.  If $\lambda\not\in\sigma_p(A_{B_1B_2})$ and $f\in\sH$ is such that
\begin{equation}\label{asskrein33}
\gamma(\myoverline\lambda)^*f\in\dom B_2\quad\text{and}\quad B_2\gamma(\myoverline\lambda)^*f\in\ran(I-B_2M(\lambda)B_1),
\end{equation}
then $f\in\ran(A_{B_1B_2}-\lambda)$ and the Krein-type formula 
\begin{equation*}
 (A_{B_1B_2}-\lambda)^{-1}f=(A_0-\lambda)^{-1}f+\gamma(\lambda)B_1\bigl(I-B_2M(\lambda)B_1\bigr)^{-1}B_2\gamma(\myoverline\lambda)^*f
\end{equation*}
holds.
In particular, if \eqref{asskrein33} holds for all $f\in\sH$, that is, $
\ran \gamma(\myoverline\lambda)^*\subset \dom B_2$ and $\ran B_2\gamma(\myoverline\lambda)^*\subset \ran(I-B_2M(\lambda)B_1)$,
then $A_{B_1B_2}-\lambda$ is a bijective operator in $\sH$.
\end{theorem}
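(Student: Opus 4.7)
The plan is to reduce the statement to Theorem~\ref{kreinthm}(i) applied to the trivial adjoint pair $\{S,S\}$. Setting $\widetilde S=S$, $\widetilde T=T$, and $\widetilde\Gamma_j=\Gamma_j$ for $j=0,1$, one reads off from the definitions in Section~\ref{gamsec} that the two $\gamma$-fields coincide, $\widetilde\gamma(\mu)=\gamma(\mu)$, as do the two Weyl functions, $\widetilde M(\mu)=M(\mu)$, on the common resolvent set $\rho(A_0)=\rho(\widetilde A_0)$ (which is symmetric about $\dR$ since $A_0$ is self-adjoint by ($\sM$)). Under this identification the hypotheses ($\sG$), ($\sD$), ($\sM$) are exactly (G), (D), (M), the operator $A_{B_1B_2}$ defined in \eqref{abs12122} is the operator $A_{B_1B_2}$ in \eqref{abs1212}, and the assumption \eqref{asskrein33} is literally \eqref{asskrein}. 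Consequently, Theorem~\ref{kreinthm}(i) delivers both the inclusion $f\in\ran(A_{B_1B_2}-\lambda)$ and the Krein-type resolvent formula, with $\widetilde\gamma(\myoverline\lambda)^*$ replaced by $\gamma(\myoverline\lambda)^*$.

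If instead one wishes to give a self-contained argument, the natural approach is to repeat the proof of Theorem~\ref{kreinthm}(i) verbatim in the present simpler setting. One defines the candidate element
\begin{equation*}
 h:=(A_0-\lambda)^{-1}f+\gamma(\lambda)B_1\bigl(I-B_2M(\lambda)B_1\bigr)^{-1}B_2\gamma(\myoverline\lambda)^*f,
\end{equation*}
which is well defined because $\lambda\notin\sigma_p(A_{B_1B_2})$ ensures injectivity of $I-B_2M(\lambda)B_1$ (by the symmetric analogue of Theorem~\ref{bsthm}(i)), while the range condition in \eqref{asskrein33} permits inverting it on $B_2\gamma(\myoverline\lambda)^*f$. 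One then computes $\Gamma_0 h$ and $\Gamma_1 h$ using $\Gamma_0(A_0-\lambda)^{-1}=0$, $\Gamma_0\gamma(\lambda)=\mathrm{id}$ on $\ran\Gamma_0$, $\Gamma_1\gamma(\lambda)=M(\lambda)$, and the identity $\gamma(\myoverline\lambda)^*f=\Gamma_1(A_0-\lambda)^{-1}f$ from the symmetric analogue of Proposition~\ref{gamprop}(iii). An elementary algebraic manipulation with $(I-B_2M(\lambda)B_1)^{-1}$ gives $B_1B_2\Gamma_1 h=\Gamma_0 h$, so $h\in\dom A_{B_1B_2}$. Finally, $(T-\lambda)h=f$ is immediate from $(A_0-\lambda)(A_0-\lambda)^{-1}f=f$ and $\ran\gamma(\lambda)\subset\ker(T-\lambda)$.

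The only delicate points are exactly the ones handled at the end of the proof of Theorem~\ref{kreinthm}: one must check that $\Gamma_1 h$ lies in $\dom B_2$ and that $B_2\Gamma_1 h$ lies in $\dom B_1$, so that the boundary condition $B_1B_2\Gamma_1 h=\Gamma_0 h$ is meaningful. Both follow from the hypothesis $\gamma(\myoverline\lambda)^*f\in\dom B_2$ together with the observation $(I-B_2M(\lambda)B_1)^{-1}B_2\gamma(\myoverline\lambda)^*f\in\dom(B_2M(\lambda)B_1)\subset\dom B_1$. The last clause, asserting bijectivity of $A_{B_1B_2}-\lambda$ when \eqref{asskrein33} is valid for all $f\in\sH$, then follows from the Krein-type formula by inspection: $(A_{B_1B_2}-\lambda)^{-1}$ is defined on all of $\sH$, and injectivity of $A_{B_1B_2}-\lambda$ is precisely $\lambda\notin\sigma_p(A_{B_1B_2})$.
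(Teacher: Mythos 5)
Your reduction is exactly how the paper obtains this statement: Appendix~\ref{app} simply specializes the general results by taking $\widetilde S=S$, $\widetilde T=T$, $\widetilde\Gamma_j=\Gamma_j$, under which ($\sG$), ($\sD$), ($\sM$) become (G), (D), (M) (with $\widetilde A_0=A_0$ self-adjoint, so $\rho(A_0)\neq\emptyset$ automatically) and Theorem~\ref{kreinthm}(i) yields the claim with $\widetilde\gamma(\myoverline\lambda)^*=\gamma(\myoverline\lambda)^*$. Your identifications of the $\gamma$-fields and Weyl functions and your treatment of the bijectivity clause are correct, so the proposal matches the paper's approach.
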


In the special case that $B_1B_2$ is a symmetric operator and $\{\cG,\Gamma_0,\Gamma_1\}$ is a quasi boundary triple the next result is known from \cite[Theorem 2.2 and Remark 2.5]{BS19}. 

\begin{theorem}
Assume that the triple
 $\{\cG,\Gamma_0,\Gamma_1\}$ satisfies {\rm ($\sG$)}, {\rm ($\sD$)}, {\rm ($\sM$)}, and let
$\gamma$ and $M$ be the associated $\gamma$-field and Weyl function, respectively.
Assume that $B_1$ and $B_2$ are closable operators in $\cG$ and that for some $\lambda_0\in\rho(A_0)$ the following conditions hold: 
\begin{enumerate}
\item[{\rm (i)}] $1\in\rho(B_2\myoverline{M(\lambda_0)B_1})$;
\item[{\rm (ii)}] $\ran(B_2\myoverline{M(\lambda_0)B_1})\subset\ran\Gamma_0\cap\dom B_1$;
\item[{\rm (iii)}] $\ran(B_1\upharpoonright\ran\Gamma_0)\subset\ran\Gamma_0$;
\item[{\rm (iv)}] $\ran(B_2\upharpoonright\ran\Gamma_1)\subset\ran\Gamma_0$; 
\item[{\rm (v)}] $\ran(\Gamma_1\upharpoonright\ker\Gamma_0)\subset\dom B_1 B_2$.
\end{enumerate}
Then  $A_{B_1 B_2}$  in \eqref{abs12122}
is a closed operator with a nonempty resolvent set and for all $\lambda\in\rho(A_0)\cap\rho(A_{B_1B_2})$ the Krein-type resolvent formula
\begin{equation*}
(A_{B_1B_2}-\lambda)^{-1}=(A_0-\lambda)^{-1}+\gamma(\lambda)B_1\bigl(I-B_2M(\lambda)B_1\bigr)^{-1}B_2\gamma(\myoverline{\lambda})^*
\end{equation*}
is valid. If, in addition, the parameter $B_1B_2$ is a symmetric operator in $\cG$ and the conditions (i)-(v) hold for
some $\lambda_0\in\rho(A_0)\cap\dR$ or some $\lambda_\pm\in\dC^\pm$, then $A_{B_1 B_2}$ is self-adjoint in $\sH$.
\end{theorem}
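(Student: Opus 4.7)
The plan is to split the argument into two parts. The first part (closedness, nonempty resolvent set, and the Krein-type resolvent formula) is the symmetric specialization of Theorem~\ref{bsextendedsatz}: in the symmetric setting one has $\widetilde T=T$, $\widetilde\Gamma_j=\Gamma_j$, $\widetilde\gamma=\gamma$, and $\widetilde M=M$, so conditions (i)--(v) coincide with the hypotheses of Theorem~\ref{bsextendedsatz} and the resolvent formula takes the stated symmetrized form. No rewriting of the proof is necessary; one only needs to observe that the arguments there (applicability of the Birman--Schwinger principle, closability of the perturbation term, inclusion $\ran(B_2\gamma(\myoverline\lambda)^*)\subset\ran(I-B_2M(\lambda)B_1)$ for every $\lambda\in\rho(A_0)\cap\rho(A_{B_1B_2})$) transfer verbatim.

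For the additional self-adjointness assertion, I would first show that the assumption that $B_1B_2$ is symmetric forces $A_{B_1B_2}$ to be symmetric. This is a direct application of the symmetric version of Lemma~\ref{bbblem} with $B=B'=B_1B_2$ and $A_B=\widetilde A_{B'}=A_{B_1B_2}$: using Green's identity ($\sG$) together with the boundary conditions $B_1B_2\Gamma_1 f=\Gamma_0 f$ and $B_1B_2\Gamma_1 g=\Gamma_0 g$, one obtains
\begin{equation*}
(A_{B_1B_2}f,g)-(f,A_{B_1B_2}g)=(\Gamma_1 f,B_1B_2\Gamma_1 g)-(B_1B_2\Gamma_1 f,\Gamma_1 g)=0
\end{equation*}
for all $f,g\in\dom A_{B_1B_2}$, and hence $A_{B_1B_2}\subset A_{B_1B_2}^*$.

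Once symmetry is established I would combine it with the first part of the theorem to deduce self-adjointness in both scenarios. If conditions (i)--(v) are satisfied for a real $\lambda_0\in\rho(A_0)\cap\dR$, then Part~1 yields $\lambda_0\in\rho(A_{B_1B_2})$; a closed symmetric operator with a real regular point is self-adjoint. If instead (i)--(v) hold for some $\lambda_+\in\dC^+$ and some $\lambda_-\in\dC^-$, then Part~1 gives $\lambda_\pm\in\rho(A_{B_1B_2})$, so both defect numbers of the closed symmetric operator $A_{B_1B_2}$ vanish and again $A_{B_1B_2}=A_{B_1B_2}^*$.

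The only non-routine point is the reduction to Theorem~\ref{bsextendedsatz}. The main thing to double-check is that in the symmetric specialization the adjoint pair $\{S,\widetilde S\}$ with $\widetilde S=S$ and $\widetilde T=T$ is admissible in the sense of Definition~\ref{qbt}, so that all the objects ($\gamma$-field, Weyl function, operators $A_0=\widetilde A_0$) coincide with those introduced in Definition~\ref{qbtsym}; this is straightforward because ($\sG$), ($\sD$), ($\sM$) are precisely (G), (D), (M) when the two sides of Green's identity are identified. Beyond that, the proof is a matter of assembling results already established in the manuscript, so no genuine new technical obstacle is expected.
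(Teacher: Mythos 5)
Your proposal is correct and follows essentially the route the paper intends: the appendix result is exactly the symmetric specialization ($\widetilde T=T$, $\widetilde\Gamma_j=\Gamma_j$, $\widetilde\gamma=\gamma$, $\widetilde M=M$, with ($\sM$) giving $A_0=A_0^*$ and hence $\rho(A_0)\neq\emptyset$) of Theorem~\ref{bsextendedsatz}, and the self-adjointness addendum follows, as you argue, from the symmetry of $A_{B_1B_2}$ obtained via Green's identity (the symmetric case of Lemma~\ref{bbblem}, noting $S\subset A_{B_1B_2}$ guarantees dense domain) combined with a real point in $\rho(A_{B_1B_2})$ or points in both half-planes forcing vanishing defect numbers. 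No gaps.
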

\end{appendix}

\end{document}